\documentclass[leqno]{amsart}
\usepackage{times}
\usepackage{amsfonts,amssymb,amsmath,amsgen,amsthm}
\usepackage{hyperref}

\theoremstyle{plain}
\newtheorem{theorem}{Theorem}[section]

\newtheorem{lemma}[theorem]{Lemma}
\newtheorem{corollary}[theorem]{Corollary}
\newtheorem{proposition}[theorem]{Proposition}
\newtheorem{hyp}[theorem]{Assumption}
\theoremstyle{remark}
\newtheorem{remark}[theorem]{Remark}
\newtheorem*{notation}{Notation}

\def\dis
{\displaystyle}

% complex numbers
\def\R{{\mathbf R}}% real numbers
\def\N{{\mathbf N}}% nonnegative integers
\def\Z{{\mathbf Z}}% integers
% Schwartz space
\def\O{\mathcal O}

\def\({\left(}
\def\){\right)}
\def\<{\left\langle}
\def\>{\right\rangle}
\def\le{\leqslant}
\def\ge{\geqslant}

\def\Tend#1#2{\mathop{\longrightarrow}\limits_{#1\rightarrow#2}}

\def\d{{\partial}}
\def\eps{\varepsilon}

\def\apsi{\psi_{\rm app}^\eps}

\DeclareMathOperator{\RE}{Re}
\DeclareMathOperator{\IM}{Im}

\numberwithin{equation}{section}

\begin{document}

\title[Interaction of coherent states for Hartree
equations]{Interaction of coherent states for Hartree
equations}  
\author[R. Carles]{R\'emi Carles}
\address{CNRS \& Univ. Montpellier~2\\Math\'ematiques
\\CC~051\\34095 Montpellier\\ France}
\email{Remi.Carles@math.cnrs.fr}

\begin{abstract}
 We consider the Hartree equation with a smooth kernel and an external
 potential, in the
 semiclassical regime. We analyze the 
 propagation of two initial wave packets, and show different possible
 effects of the interaction, according to the size of the
 nonlinearity in terms of the semiclassical parameter. We show three
 different sorts of nonlinear phenomena. In
 each case, the structure 
 of the wave as a sum of two coherent states is preserved. However, the
 envelope and the center (in phase space) of these two wave packets
 are affected by nonlinear interferences, which are described precisely. 
\end{abstract}
%\subjclass[2010]{35B40, 35Q40, 70H05, 81Q20, 81S30}
\thanks{This work was supported by the French ANR project
  R.A.S. (ANR-08-JCJC-0124-01).}
\maketitle

\setcounter{tocdepth}{1}
\tableofcontents

\section{Introduction}
\label{sec:intro}
Consider the following Hartree equation in the semiclassical regime
$\eps\to 0$:
 \begin{equation}
  \label{eq:r3alpha}
  i\eps \d_t \psi^\eps + \frac{\eps^2}{2}\Delta \psi^\eps =
  V(t,x)\psi^\eps +\eps^\alpha\(K\ast 
  |\psi^\eps|^2\) \psi^\eps,\quad t\in
    \R_+=[0,\infty),\ x\in \R^d, 
\end{equation}
where $\alpha\ge 0$, $K:\R^d\to \R$, $V:\R_{+}\times \R^d\to \R$,
$d\ge 1$.
%add
Equation~\eqref{eq:r3alpha} appears for instance as a model to study
superfluids, with application to Bose--Einstein condensation: 
in \cite{Be99,BeRo99}, the kernel $K$ is given by the formula
\begin{equation*}
  K(x) = \(a_1 + a_2 |x|^2 +a_3 |x|^4\) e^{-A^2|x|^2}+a_4e^{-B^2|x|^2} ,\quad
  a_1,a_2,a_3,a_4, A,B\in \R. 
\end{equation*}
%add
Assume
\begin{equation}
  \label{eq:ci}
  \psi^\eps(0,x)=\eps^{-d/4}
  a\left(\frac{x-q_0}{\sqrt\eps}\right)
e^{i{(x-q_0)\cdot p_0/\eps}},\quad a\in{\mathcal S}({\R}^d),\quad
q_0,p_0\in \R^d.
\end{equation}
Such initial data are called semiclassical wave packets, or coherent
states. They correspond to a wave function which is equally localized
in space and in frequency (at scale $\sqrt\eps$), so the uncertainty
principle is optimized in terms of $\eps$: 
the three quantities
\begin{equation*}
  \|\psi^\eps(0)\|_{L^2(\R^d)},\quad \left\|
\(\sqrt\eps \nabla-i\frac{p_0}{\sqrt\eps}\)
    \psi^\eps(0)\right\|_{L^2(\R^d)},\quad \text{and } 
\quad \left\|\frac{x-q_0}{\sqrt\eps}
  \psi^\eps(0)\right\|_{L^2(\R^d)}
\end{equation*}
have the same order of magnitude, $\O(1)$, as $\eps\to 0$. In the
linear case $K=0$, another reason why such
specific initial data are particularly interesting is that the
superposition principle is available: if we can describe $\psi^\eps$
in the case \eqref{eq:ci}, then the evolution of a sum of initial wave
packets of the form \eqref{eq:ci} is simply the sum of the evolutions
of each initial wave packet. In this paper, we address this question
in the nonlinear setting. We describe several nonlinear
interference phenomena in the case where $K\not =0$ is smooth, and
$\psi^\eps(0,x)$ is the sum of two such wave packets. 

The value of the parameter $\alpha$ in \eqref{eq:r3alpha} measures
the strength of the nonlinear interaction in the limit $\eps\to 0$. 
In \cite{CaFe11}, where the Hartree
nonlinearity is replaced by a local nonlinearity, it is established
that if nonlinear effects are critical in terms of semiclassical
dynamics (that is, the value of $\alpha$ is critical, see
\S\ref{sec:notion} for this notion), then despite the 
fact that the
problem is nonlinear, the superposition principle remains valid,
in the limit $\eps\to 0$. In \cite{CaCa11}, the case of
a homogeneous Hartree
nonlinearity $K(x)=\lambda |x|^{-\gamma}$ is considered: conclusions
similar to those in \cite{CaFe11} are proven.  
In these two frameworks, the description of
the wave packet dynamics in a ``supercritical'' case (nonlinear
effects are stronger than in the critical case) is an open
question, even on a formal level. On the other hand, in the case of a
\emph{smooth} Hartree kernel, the 
propagation of a single wave packet has been described in
supercritical regimes (\cite{APPP11,CaCa11}).  In this paper, we prove
that in the critical regime, nonlinear interferences affect the
propagation of two initial wave packets at
leading order, in contrast with the case of a
homogeneous kernel. We 
also describe the nonlinear interactions in supercritical regimes,
where even stronger interferences are present. In
all cases, we prove a convergence result on all finite time
intervals ($t\in [0,T]$ with $T$ independent of $\eps$), as $\eps\to 0$.

 \begin{hyp}\label{hyp:gen}
   The external potential $V$ is $C^3$, real-valued, and at most quadratic
  in space:
  \begin{equation*}
 V\in C^3(\R_{+}\times \R^d;\R),\quad \text{and}\quad   \d_x^\beta V\in
 L^\infty\(\R_{+}\times\R^d\),\quad  |\beta|=  2,3.
  \end{equation*}
The kernel $K$ is $C^3$, real-valued, bounded as well as  its first
three derivatives: 
  \begin{equation*}
  K\in C^3\cap W^{3,\infty}(\R^d;\R).
  \end{equation*}
\end{hyp}
Consider the Hamiltonian flow:
\begin{equation}\label{eq:traj}
 \dot q(t)=p(t),\;\;\dot p(t)=-\nabla V\(t,q(t)\);\quad
 q(0)=q_0,\;p(0)= p_0.
\end{equation}
The regularity of $V$ implies that \eqref{eq:traj} has a unique,
global solution 
\begin{equation*}
  t\mapsto \(q(t),p(t)\)\in
C^3\(\R_+;\R^{2d}\).
\end{equation*}
 Since we shall consider only bounded time
intervals in this paper, the growth in time of the classical
trajectories is not discussed.

\subsection{The linear case $K=0$}\label{sec:linear} 
Introduce the function
\begin{equation*}\label{eq:linearvarphi}
\varphi_{{\rm
lin}}^{\eps}(t,x)=\eps^{-d/4}u^{\rm lin}\(t,
\frac{x-q(t)}{\sqrt{\eps}}\)e^{i(S(t) 
+p(t)\cdot(x-q(t)))/\eps},
\end{equation*}
where $(q,p)$ is given by \eqref{eq:traj}, the classical action is
given by 
\begin{equation}\label{eq:classicalaction}
S(t)=\int_0^t  \(\frac{1}{2} |p(s)|^2-V(s,q(s))\)ds,
\end{equation}
and the envelope $u^{\rm lin}=u^{\rm lin}(t,y) $ solves
\begin{equation}\label{eq:linearu}
 i\partial_{t}u^{\rm lin}+\frac{1}{2}\Delta u^{\rm
   lin}=\frac{1}{2}\<y, \nabla^2 V\(t,q(t)\)y\>u^{\rm lin}\quad ;\quad 
u^{\rm lin}(0,y)=a(y),
\end{equation}
where the notation $\nabla^2$ stands for the Hessian matrix,
and since the space variable for $u^{\rm lin}$ is
  $y$, $\Delta$ stands for $\Delta_y$.
The following lemma is standard, see e.g.
\cite{BGP99,CoRo97,CoRo06,CoRo07,Hag80,HaJo00,HaJo01} and references
therein.
\begin{lemma}\label{lemlinear}
Let $a\in \mathcal{S}(\R^{d})$, and $\psi^\eps$ solve \eqref{eq:r3alpha}
with $K=0$, and \eqref{eq:ci}. There exist positive
constants $C$ 
and $C_{1}$ independent of $\eps$
 such that
 \begin{equation*}
 \|\psi^{\eps}(t)-\varphi_{{\rm
     lin}}^{\eps}(t)\|_{L^{2}(\R^{d})}\le 
 C\sqrt{\eps} e^{C_{1}t},\quad \forall t\ge 0.
 \end{equation*}
 In particular, there exists  $c>0$ independent of $\eps$ such that
 \begin{equation*}
 \sup_{0\le t\le c\ln\frac{1}{\eps}}\|\psi^{\eps}(t)
 -\varphi_{{\rm lin}}^{\eps}(t)\|_{L^{2}(\R^{d})}\Tend
 \eps 0 0. 
 \end{equation*}
 \end{lemma}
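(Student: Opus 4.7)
The plan is to plug the explicit ansatz $\varphi_{\rm lin}^\eps$ into the linear equation \eqref{eq:r3alpha} with $K=0$, identify the residual that the construction leaves behind, and then control the difference $w^\eps:=\psi^\eps-\varphi_{\rm lin}^\eps$ by a standard $L^2$ energy estimate. The WKB phase $(S(t)+p(t)\cdot(x-q(t)))/\eps$ together with Assumption~\ref{hyp:gen} and the classical flow \eqref{eq:traj} are tailored precisely so that all source terms up to order two in $(x-q(t))$ cancel; what survives is the cubic Taylor remainder of $V$.

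Concretely, I would Taylor expand
\[
V(t,x)=V(t,q(t))+\nabla V(t,q(t))\cdot(x-q(t))+\tfrac12\<x-q(t),\nabla^2V(t,q(t))(x-q(t))\>+R_3(t,x),
\]
with $|R_3(t,x)|\le \tfrac16\|\d_x^3 V\|_{L^\infty}|x-q(t)|^3$. A direct computation (chain rule on $\varphi_{\rm lin}^\eps$) shows that the zeroth-order term is absorbed by the action phase $S(t)/\eps$ via \eqref{eq:classicalaction}, the linear term is absorbed by the phase $p(t)\cdot(x-q(t))/\eps$ after using $\dot q=p$, $\dot p=-\nabla V(t,q(t))$, and the quadratic term is exactly matched by the potential appearing in \eqref{eq:linearu}. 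Therefore
\[
i\eps\d_t\varphi_{\rm lin}^\eps+\tfrac{\eps^2}{2}\Delta\varphi_{\rm lin}^\eps-V\varphi_{\rm lin}^\eps=-R_3(t,x)\varphi_{\rm lin}^\eps=:-r^\eps.
\]
After the change of variable $y=(x-q(t))/\sqrt\eps$, which is isometric on $L^2$ thanks to the $\eps^{-d/4}$ prefactor, I get
\[
\|r^\eps(t)\|_{L^2(\R^d)}\le \tfrac16\|\d_x^3V\|_{L^\infty}\,\eps^{3/2}\,\bigl\||y|^3 u^{\rm lin}(t)\bigr\|_{L^2(\R^d)}.
\]

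Next I would control the third moment of $u^{\rm lin}$. Since \eqref{eq:linearu} is a Schr\"odinger equation with a smooth, bounded-in-$t$ quadratic potential, the vector fields $y$ and $\nabla_y$ have commutators with $i\d_t+\tfrac12\Delta-\tfrac12\<y,\nabla^2V(t,q(t))y\>$ that are linear in $(y,\nabla_y)$ with bounded coefficients. Writing the coupled evolution satisfied by $y^\beta u^{\rm lin}$ and $\d_y^\gamma u^{\rm lin}$ for $|\beta|,|\gamma|\le 3$ and closing by Gronwall then yields $\||y|^3 u^{\rm lin}(t)\|_{L^2}\le C_0 e^{C_1 t}$. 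Since $w^\eps(0)=0$ and $V$ is real,
\[
\eps\,\frac{d}{dt}\|w^\eps(t)\|_{L^2}^2\le 2\|r^\eps(t)\|_{L^2}\|w^\eps(t)\|_{L^2},
\]
from which $\|w^\eps(t)\|_{L^2}\le \eps^{-1}\int_0^t\|r^\eps(s)\|_{L^2}ds\le C\sqrt\eps\,e^{C_1 t}$. Choosing $c<1/(2C_1)$ gives the Ehrenfest-type second statement.

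The main technical step is the moment estimate on $u^{\rm lin}$: the cubic remainder forces one to propagate weighted $L^2$-norms up to order three through a system that couples position and momentum weights via the (possibly indefinite) Hessian $\nabla^2 V(t,q(t))$. Once these weighted bounds, with their intrinsic exponential-in-$t$ growth, are in hand, the substitution, the Taylor cancellations, and the energy estimate are all routine.
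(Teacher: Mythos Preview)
Your argument is correct and is precisely the standard one: the paper does not supply its own proof of this lemma but simply cites the references \cite{BGP99,CoRo97,CoRo06,CoRo07,Hag80,HaJo00,HaJo01}, and the computations you outline (Taylor expansion of $V$ about $q(t)$, cancellation via \eqref{eq:traj}--\eqref{eq:classicalaction}--\eqref{eq:linearu}, cubic remainder controlled by weighted $L^2$-bounds on $u^{\rm lin}$, then $L^2$ energy estimate) are exactly those carried out in Section~\ref{sec:rough} and \eqref{eq:estrjV} in the more general nonlinear setting. Your identification of the Ehrenfest threshold $c<1/(2C_1)$ is also correct.
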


\subsection{Nonlinear case: notion of criticality}
\label{sec:notion}
In the nonlinear case $K\not =0$, the following distinction was
established in \cite{CaCa11}: 
\begin{itemize}
\item If $\alpha>1$, nonlinear effects are negligible at leading
  order: with the same function $\varphi_{\rm lin}^\eps$ as in the previous
  section, there exists $C>0$ such that
 \begin{equation*}
 \|\psi^{\eps}(t)-\varphi_{{\rm
     lin}}^{\eps}(t)\|_{L^{2}(\R^{d})}\le 
 C\sqrt{\eps} e^{C t},\quad \forall t\ge 0.
 \end{equation*}
\item If $\alpha=1$, nonlinear effects become relevant at leading
  order (unless $K(0)=0$): there exists $C>0$ such that
 \begin{equation*}
 \left\|\psi^{\eps}(t)-\varphi_{{\rm
     lin}}^{\eps}(t)e^{-itK(0)\|a\|_{L^2}^2}\right\|_{L^{2}(\R^{d})}\le 
 C\sqrt{\eps} e^{C t},\quad \forall t\ge 0.
 \end{equation*}
\end{itemize}
From this point of view, the case $\alpha=1$ is critical: the
supercritical behavior is described in two cases, $\alpha=1/2$
(\cite{CaCa11}) and
$\alpha=0$ (\cite{APPP11,CaCa11}). The approximate solution derived in
these two cases may be viewed as a particular case of the approximate
solution presented below,  when one of the two initial
wave packets is zero, so we choose not
to be more explicit about these two cases here. Other cases could be
described as well: the 
case $\alpha\in (0,1/2)$ is similar to the case $\alpha=0$, and the
case $\alpha\in (1/2,1)$ is similar to the case $\alpha=1/2$, up to
several modifications in the notations essentially. 
\smallbreak

  In the case $\alpha>1$, nonlinear effects are negligible at leading
  order, so the superposition principle remains: the
  nonlinear evolution of two (or more) initial wave packets is well
  approximated by the sum of the linear evolutions of each wave
  packet. We will see that when $\alpha\le 1$, nonlinear interferences
  affect the behavior of $\psi^\eps$ at leading order. 
\smallbreak

Throughout this paper, for $k\in \N$, we will denote by
\begin{equation*}
  \Sigma^k = \left\{ f\in L^2(\R^d)\ ;\ \| f \| _{\Sigma^k}:=
    \sum_{|\alpha|+|\beta|\le  k}\left\lVert x^\alpha \d_x^\beta
      f\right\rVert_{L^2(\R^d)}<\infty\right\}, 
\end{equation*}
and $\Sigma^1=\Sigma$. 
As established in \cite{CaCa11}, if
$\psi^\eps(0,\cdot)\in L^2(\R^d)$, then under
Assumption~\ref{hyp:gen}, \eqref{eq:r3alpha} has a unique solution
$\psi^\eps \in C(\R_+;L^2(\R^d))$, regardless of the value of
$\alpha$\footnote{To be complete, the regularity assumption on $V$ in
\cite{CaCa11} is stronger, but Assumption~\ref{hyp:gen} is enough.},
and 
\begin{equation*}
  \|\psi^\eps(t)\|_{L^2(\R^d)} = \|\psi^\eps(0)\|_{L^2(\R^d)}, \quad
  \forall t\ge 0. 
\end{equation*}
\subsection{Critical case: $\alpha=1$}
\label{sec:crit}
We now consider \eqref{eq:r3alpha} in the case of two initial wave
packets: \eqref{eq:ci} is replaced by
\begin{equation}\label{eq:ci2}
\psi^{\eps}(0,x)= \eps^{-d/4} \sum_{j=1,2} a_j
      \(\frac{x-q_{j0}} {\sqrt{\eps}}\) e^{i(x-q_{j0})\cdot
        p_{j0}/ \eps},
\end{equation}
with $(q_{10},p_{10})\not = (q_{20},p_{20})$. Let $(q_j,p_j)$ be the
solution to \eqref{eq:traj} with initial data $(q_{j0},p_{j0})$, and
$S_j$ the associated classical action given by
\eqref{eq:classicalaction}. Define the approximate solution as 
\begin{equation}
  \label{eq:approxdef}
  \psi_{\rm app}^\eps(t,x)=\eps^{-d/4} \sum_{j=1,2}u_j 
\left(t,\frac{x-q_j(t)}{\sqrt\eps}\right)e^{i\left(S_j(t)+p_j(t)\cdot
    (x-q_j(t))\right)/\eps},
\end{equation}
where the envelopes $u_j$ are given by the  formulas:
\begin{equation}
  \label{eq:envcrit}
\left\{
\begin{aligned}
  u_1(t,y)&=u^{\rm lin}_1(t,y)e^{-i t
    K(0)\|a_1\|_{L^2}^2-i\|a_2\|_{L^2}^2 \int_0^t K\(q_1(s)-q_2(s)\)ds},\\
u_2(t,y)&=u^{\rm lin}_2(t,y)e^{-i t
    K(0)\|a_2\|_{L^2}^2-i\|a_1\|_{L^2}^2 \int_0^t K\(
    q_2(s)-q_1(s)\)ds},
\end{aligned}
\right.
\end{equation}
with obvious notations adapted from  \eqref{eq:linearu}.
\begin{theorem}\label{theo:critical}
  Let $d\ge 1$, $V,K$ satisfying Assumption~\ref{hyp:gen}. Let
  $a_1,a_2\in \Sigma^3$, and $\psi^\eps$ be the solution to
  \eqref{eq:r3alpha} with $\alpha=1$ and initial data
  \eqref{eq:ci2}. Then for any $T>0$ independent of $\eps$, there
  exists $C>0$ independent of $\eps$ such that
  \begin{equation*}
    \sup_{t\in [0,T]}\left\| \psi^\eps(t)-\apsi
      (t)\right\|_{L^2(\R^d)}\le C\sqrt\eps,
  \end{equation*}
where $\apsi$ is given by \eqref{eq:approxdef}--\eqref{eq:envcrit}. 
\end{theorem}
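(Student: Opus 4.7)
The plan is the standard consistency-plus-stability argument for semiclassical wave packets. Writing $\psi^\eps=\apsi+w^\eps$ and substituting into \eqref{eq:r3alpha} yields
\[
i\eps\d_t w^\eps+\tfrac{\eps^2}{2}\Delta w^\eps - Vw^\eps = \eps\bigl[(K\ast|\psi^\eps|^2)\psi^\eps-(K\ast|\apsi|^2)\apsi\bigr] - r^\eps,
\]
where the residual
\[
r^\eps:=i\eps\d_t\apsi+\tfrac{\eps^2}{2}\Delta\apsi-V\apsi-\eps(K\ast|\apsi|^2)\apsi
\]
must be controlled in $L^2$. Multiplying by $\bar w^\eps$, integrating and taking imaginary parts, the $V$-term drops (since $V,K$ are real), and the Hartree difference is $L^2$-Lipschitz thanks to $\|K\ast g\|_{L^\infty}\le\|K\|_{L^\infty}\|g\|_{L^1}$ together with the uniform bounds $\|\psi^\eps(t)\|_{L^2}=\|\psi^\eps(0)\|_{L^2}$ and $\|\apsi(t)\|_{L^2}\le C$. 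This gives $\eps\,\tfrac{d}{dt}\|w^\eps(t)\|_{L^2}\le C\eps\|w^\eps(t)\|_{L^2}+\|r^\eps(t)\|_{L^2}$; since $w^\eps(0)=0$, Gronwall reduces the theorem to showing $\|r^\eps(t)\|_{L^2}\le C\eps^{3/2}$ on $[0,T]$.

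For the linear part of $r^\eps$, I would carry out the standard WKB computation: use Hamilton's equations \eqref{eq:traj}, the action \eqref{eq:classicalaction}, and Taylor-expand $V$ around each $q_j(t)$ up to order three. A direct check using \eqref{eq:envcrit} shows that each $u_j$ solves
\[
i\d_t u_j+\tfrac{1}{2}\Delta u_j=\tfrac{1}{2}\<y,\nabla^2 V(t,q_j(t))y\>u_j+\dot\theta_j(t)u_j,
\]
with $\dot\theta_j(t)=K(0)\|a_j\|_{L^2}^2+\|a_{3-j}\|_{L^2}^2\,K(q_j(t)-q_{3-j}(t))$. After the usual cancellations, the linear residual reduces to
\[
r^\eps_{\rm lin}=\eps\sum_{j=1,2}\dot\theta_j(t)\,\eps^{-d/4}u_j\left(t,\frac{x-q_j(t)}{\sqrt\eps}\right)e^{i(S_j+p_j\cdot(x-q_j))/\eps}+O_{L^2}(\eps^{3/2}),
\]
the $O(\eps^{3/2})$ remainder coming from the cubic Taylor term of $V$, which after rescaling requires $y^3 u_j\in L^2$ and hence the assumption $a_j\in\Sigma^3$.

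It then suffices to show that $\eps(K\ast|\apsi|^2)\apsi$ matches this explicit term modulo $O_{L^2}(\eps^{3/2})$. Expanding $|\apsi|^2$, the two diagonal pieces are handled by a change of variables and Taylor-expanding $K$:
\[
K\ast\left[\eps^{-d/2}|u_j((\cdot-q_j)/\sqrt\eps)|^2\right](x)=K(x-q_j(t))\|a_j\|_{L^2}^2+O(\sqrt\eps),
\]
using that $\|u_j(t)\|_{L^2}=\|a_j\|_{L^2}$ by $L^2$-unitarity of \eqref{eq:linearu}. Evaluated at $x$ in the support of the $k$-th wave packet this is $K(q_k-q_j)\|a_j\|_{L^2}^2+O(\sqrt\eps)$, and the double sum over $j,k$ reproduces the factor $\dot\theta_k$ multiplying the $k$-th wave packet, matching $r^\eps_{\rm lin}$ up to $O_{L^2}(\eps^{3/2})$.

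The technical heart of the argument lies in the two cross terms
\[
f^\eps(x)=\eps^{-d/2}u_1((x-q_1)/\sqrt\eps)\,\bar u_2((x-q_2)/\sqrt\eps)\,e^{i[(p_1-p_2)\cdot x+\cdots]/\eps}
\]
appearing in $|\apsi|^2$. Uniqueness of trajectories for \eqref{eq:traj} together with $(q_{10},p_{10})\ne(q_{20},p_{20})$ forces $|q_1(t)-q_2(t)|+|p_1(t)-p_2(t)|\ge\delta>0$ uniformly on $[0,T]$. On the portion of $[0,T]$ where $|q_1-q_2|\ge\delta/2$, the two rescaled envelopes are localized around centers separated by $R\sim 1/\sqrt\eps$; splitting the integral at $|z|=R/2$ and using the weighted bound $\|\,|y|u_j\|_{L^2}<\infty$ yields $\|f^\eps\|_{L^1}=O(\sqrt\eps)$, hence $\|K\ast f^\eps\|_{L^\infty}=O(\sqrt\eps)$. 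On the complementary portion where $|p_1-p_2|\ge\delta/2$, the phase $e^{i(p_1-p_2)\cdot z/\sqrt\eps}$ in the rescaled convolution is non-stationary, and repeated integration by parts in $z$ (absorbing derivatives of $K$ and of the $u_j$ via $K\in W^{3,\infty}$ and $a_j\in\Sigma^3$) provides as many $\sqrt\eps$ factors as needed. In both regimes $\eps\|K\ast f^\eps\|_{L^\infty}\|\apsi\|_{L^2}=O(\eps^{3/2})$, which closes the consistency bound and, with the stability step above, proves the theorem.
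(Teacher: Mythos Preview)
Your proposal is correct and follows essentially the same route as the paper. The paper organizes the consistency step as a single formal expansion (Section~\ref{sec:rough}) setting all coefficients $b_{0j},b_{1j},b_{2j}$ to zero simultaneously, whereas you split it into ``linear residual equals $\eps\dot\theta_j\varphi_j$'' plus ``Hartree term reproduces $\eps\dot\theta_j\varphi_j$''; but these are the same computation bookkept differently. For the cross terms, the paper's Proposition~\ref{prop:microloc} uses Peetre's inequality in the $|\delta q|\ge\eta$ regime where you use the $|z|\lessgtr R/2$ splitting---these give the same $O(\sqrt\eps)$ bound from one moment, and the non-stationary phase argument in the $|\delta p|\ge\eta$ regime is identical. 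The stability step (energy estimate, $L^\infty$--$L^1$ bound on the Hartree difference, Gronwall) is exactly as in Section~\ref{sec:proofcritical}.
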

The nonlinear effects are described at leading order by the
exponentials in \eqref{eq:envcrit}. Even in the case of a single
initial wave packet (say $a_2=0$), the nonlinearity affects the
envelope by a phase self-modulation. The second terms in the
exponentials describe the effect of nonlinear interferences,
which are not a simple superposition in general. 
\smallbreak

  As pointed out above, it may be surprising to notice that even in
  the critical case 
$\alpha=1$, nonlinear interferences are present at leading order. This
is in sharp contrast with the case of an homogeneous kernel,
$K(x)=\lambda |x|^{-\gamma}$, $0<\gamma<\min (2,d)$. It was shown in
\cite{CaCa11} that in this case, the critical value
for $\alpha$ is 
$\alpha_c =1+\gamma/2$, and that when $\alpha=\alpha_c$, the
superposition principle remains, even though the nonlinearity affects
the propagation of a single wave packet at leading order (the envelope
equation is nonlinear). 

\subsection{Case  $\alpha=1/2$}
\label{sec:super12}
The approximate solution is now constructed as follows. 
The pairs $(q_j,p_j)$, $j=1,2$, are still given by the usual classical
flow \eqref{eq:traj}. On the other hand, we modify the actions, and
make them $\eps$-dependent:
 \begin{equation}
  \label{eq:actionhalf}
\left\{
  \begin{aligned}
S^\eps_1(t) &= \int_{0}^{t}\(\frac{1}{2}
    |p_1(s)|^2-V(s,q_1(s))\)ds\\
&\phantom{=}-t\sqrt{\eps} K(0)\|a_1\|_{L^2(\R^d)}^2
    -\sqrt\eps \|a_2\|_{L^2(\R^d)}^2\int_0^t K\(q_1(s)-q_2(s)\)ds ,\\
S^\eps_2(t) &= \int_{0}^{t}\(\frac{1}{2}
    |p_2(s)|^2-V(s,q_2(s))\)ds\\
&\phantom{=}-t\sqrt{\eps} K(0)\|a_2\|_{L^2(\R^d)}^2
    -\sqrt\eps \|a_1\|_{L^2(\R^d)}^2\int_0^t K\(q_2(s)-q_1(s)\)ds.
\end{aligned}
\right.
\end{equation}
Consider the system of Schr\"odinger equations
\begin{equation}
  \label{eq:envhalf}
  \left\{
\begin{aligned}
i\d_t \tilde u_1 +\frac{1}{2}\Delta \tilde u_1 & =
\frac{1}{2}\<y,\nabla^2 V\(t,q_1(t)\)y\>\tilde u_1+\|a_1\|_{L^2}^2
y\cdot \nabla K(0) \tilde u_1\\
&\phantom{=}+ \|a_2\|_{L^2}^2
y\cdot \nabla K\(q_1(t)-q_2(t)\) \tilde u_1,\\
i\d_t \tilde u_2 +\frac{1}{2}\Delta \tilde u_2 &=
\frac{1}{2}\<y,\nabla^2 V\(t,q_2(t)\)y\>\tilde u_2+ \|a_2\|_{L^2}^2
y\cdot \nabla K(0) \tilde u_2\\
&\phantom{=} + \|a_1\|_{L^2}^2
y\cdot \nabla K\(q_2(t)-q_1(t)\) \tilde u_2,
\end{aligned}
\right.
\end{equation}
with initial data $a_1$ and $a_2$, respectively.
These are two linear equations with time dependent potentials, which
are polynomial in $y$, of degree (at most) two. The following result
is classical, see e.g. \cite{ReedSimon2,Fujiwara79,Fujiwara}:
\begin{lemma}\label{lem:exist12}
  For $j=1,2$, let $a_j\in L^2(\R^d)$, and $(q_j,p_j)\in C^3(\R_+;\R^{2d})$
  given by \eqref{eq:traj}. There exists a unique solution
  $(\tilde u_1,\tilde u_2)\in C(\R_+;L^2(\R^d))^2$ to
  \eqref{eq:envhalf} such that 
  $(\tilde u_1,\tilde u_2)_{\mid t=0} = (a_1,a_2)$. In addition, 
  the following  conservations hold:
  \begin{equation*}
    \|\tilde u_j(t)\|_{L^2(\R^d)}= \|a_j\|_{L^2(\R^d)},\quad \forall t\ge 0,\
    j=1,2. 
  \end{equation*}
\end{lemma}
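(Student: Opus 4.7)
The plan is to exploit two structural features of \eqref{eq:envhalf}. First, the system is in fact decoupled: the equation for $\tilde u_j$ involves only $\tilde u_j$ itself, the prescribed trajectories $q_1,q_2$, and the constants $\|a_1\|_{L^2}$, $\|a_2\|_{L^2}$. So it suffices to solve, for each $j\in\{1,2\}$ separately, a single linear Schr\"odinger equation
\begin{equation*}
  i\d_t \tilde u_j +\frac{1}{2}\Delta \tilde u_j = P_j(t,y)\tilde u_j,\quad \tilde u_{j\mid t=0}=a_j,
\end{equation*}
with $P_j(t,y)=\frac{1}{2}\<y,\nabla^2V(t,q_j(t))y\>+B_j(t)\cdot y$, where $B_j$ is a continuous, compact-interval-bounded $\R^d$-valued function (built from $\nabla K(0)$ and $\nabla K(q_j(t)-q_{3-j}(t))$, both bounded thanks to $K\in W^{3,\infty}$ and $(q_1,q_2)\in C^3$). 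Hence $P_j$ is real-valued and polynomial in $y$ of degree at most two, with coefficients continuous in $t$.

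Existence and uniqueness of $\tilde u_j\in C(\R_+;L^2(\R^d))$ then fall within the classical theory of Schr\"odinger evolutions associated with at-most-quadratic time-dependent Hamiltonians: Fujiwara's construction \cite{Fujiwara79,Fujiwara} yields, for each $j$, a strongly continuous two-parameter family of unitary propagators $U_j(t,s)$ on $L^2(\R^d)$ which also preserves $\Sigma^k$ for every $k\ge 0$. Equivalently, one can treat the quadratic part through the metaplectic (Mehler) representation and absorb the linear term $B_j(t)\cdot y$ via a gauge-and-translation change of unknown of the form $\tilde u_j(t,y)=e^{i\Phi_j(t,y)}v_j(t,y-\xi_j(t))$, reducing the problem for $v_j$ to a purely quadratic Hamiltonian already treated by Mehler's formula.

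The $L^2$ conservation is then the standard energy identity. On the dense subspace $\Sigma^2\subset L^2(\R^d)$, the propagator keeps the solution in $\Sigma^2$ and makes it $C^1$ in time with values in $L^2$, so
\begin{equation*}
  \frac{d}{dt}\|\tilde u_j(t)\|_{L^2}^2 = 2\RE\int_{\R^d}\bar{\tilde u}_j\,\d_t \tilde u_j\,dy = -2\IM \int_{\R^d}\bar{\tilde u}_j\(\frac{1}{2}\Delta \tilde u_j-P_j\tilde u_j\)dy.
\end{equation*}
An integration by parts turns the Laplacian contribution into $-\IM\int|\nabla \tilde u_j|^2\,dy=0$, while the reality of $P_j$ kills the potential contribution. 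This yields $\|\tilde u_j(t)\|_{L^2}=\|a_j\|_{L^2}$ first for $a_j\in\Sigma^2$, and then for arbitrary $a_j\in L^2$ by density together with the $L^2$-continuity of $U_j(t,0)$.

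I do not expect a substantial obstacle here: the only point genuinely requiring care is justifying the energy identity despite the quadratic growth of $P_j$, which is precisely why one first works in the dense subspace $\Sigma^2$ on which the propagator acts smoothly. Beyond that, the result is a direct appeal to the references already cited after the statement.
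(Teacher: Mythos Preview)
Your proposal is correct and matches the paper's approach exactly: the paper does not give a proof of this lemma, simply noting that the result is classical and citing \cite{ReedSimon2,Fujiwara79,Fujiwara}. Your observation that the system is decoupled and reduces to two independent linear Schr\"odinger equations with real, at-most-quadratic time-dependent potentials, followed by the appeal to Fujiwara's propagator construction and the standard energy identity on a dense subspace, is precisely the content behind those citations.
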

To define the envelopes in \eqref{eq:approxdef}, set
\begin{equation}\label{eq:solenvhalf}
%\left\{
\begin{aligned}
&  u_1(t,y_1) = \tilde u_1(t,y_1)\exp\(i\int_0^t\(\nabla K(0)\cdot
  \tilde G_1(s) + \nabla K\(q_1(s)-q_2(s)\)\cdot \tilde G_2(s)\)ds \),\\
& u_2(t,y_2) = \tilde u_2(t,y_2)\exp\(i\int_0^t\(\nabla K(0)\cdot
  \tilde G_2(s) + \nabla K\(q_2(s)-q_1(s)\)\cdot \tilde G_1(s)\)ds \),
\end{aligned}
%\right.
\end{equation}
where $\dis  \tilde G_j(t) = \int_{\R^d}z|\tilde u_j(t,z)|^2dz.$ Since
$\tilde G_j$ is a nonlinear function of $\tilde u_j$, the system
formed by $(u_1,u_2)$ is nonlinear, with a nonlinear coupling:
nonlinear interferences are present both in rapid oscillations --- the
modified actions generate $\sqrt\eps$-oscillations in time ---
and in the envelopes. The presence of the functions 
$\tilde G_j$ in the above formulas reveals non-local (in space)
nonlinear phenomena concerning the envelopes in $\apsi$. Since the
problem is now supercritical, it should not be surprising that
stronger regularity properties are assumed in the following result
(see Remark~\ref{rem:sur}).
\begin{theorem}\label{theo:alpha12}
  Let $d\ge 1$. 
 Assume 
   that $V$ and $K$ are real-valued and satisfy:
 \begin{align*}
 & V\in C^5(\R_{+}\times \R^d;\R),\quad \text{and}\quad   \d_x^\beta V\in
 L^\infty\(\R_{+}\times\R^d\),\quad  2\le |\beta|\le 5.\\
 &K\in W^{5,\infty}(\R^d;\R).
 \end{align*}
Let  $a_1,a_2\in \Sigma^5$, and $\apsi$ be given by
  \eqref{eq:approxdef}--\eqref{eq:actionhalf}--\eqref{eq:envhalf}--\eqref{eq:solenvhalf}. 
Then for any $T>0$ independent of $\eps$, there
  exists $C>0$ independent of $\eps$ such that
  \begin{equation*}
    \sup_{t\in [0,T]}\left\| \psi^\eps(t)-\apsi
      (t)\right\|_{L^2(\R^d)}\le C\sqrt\eps.
  \end{equation*}
\end{theorem}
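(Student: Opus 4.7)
The plan is to prove Theorem~\ref{theo:alpha12} via a consistency-plus-stability argument: first verify that $\apsi$ solves \eqref{eq:r3alpha} with $\alpha=1/2$ up to a residual $r^\eps$ with $\|r^\eps(t)\|_{L^2}=O(\eps^{3/2})$ uniformly on $[0,T]$, then deduce the $O(\sqrt\eps)$ convergence by an $L^2$ energy estimate for $w^\eps:=\psi^\eps-\apsi$, which satisfies $w^\eps(0)=0$ and
\begin{equation*}
  i\eps\d_t w^\eps = \bigl(-\tfrac{\eps^2}{2}\Delta + V + \sqrt\eps\,K\ast|\psi^\eps|^2\bigr) w^\eps + \sqrt\eps\bigl(K\ast(|\psi^\eps|^2-|\apsi|^2)\bigr)\apsi + r^\eps.
\end{equation*}

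For the residual, I plug \eqref{eq:approxdef} into the equation and work on each wave packet component via the change of variables $y=(x-q_j(t))/\sqrt\eps$. The linear contributions go exactly as in Lemma~\ref{lemlinear}: Taylor-expanding $V(t,q_j+\sqrt\eps y)$ through order two and invoking \eqref{eq:traj}, \eqref{eq:classicalaction} cancels the $O(1)$ and $O(\sqrt\eps)$ pieces and produces the Hessian potential appearing in \eqref{eq:envhalf}, leaving an $O(\eps^{3/2})$ remainder; the stronger hypotheses $V\in C^5$ and $a_j\in\Sigma^5$ (versus Theorem~\ref{theo:critical}) are used to bound this remainder in $L^2$ and to propagate enough weighted moments of $\tilde u_j$ through \eqref{eq:envhalf}. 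For the nonlinear term I expand the diagonal convolutions at $x=q_j+\sqrt\eps y$:
\begin{equation*}
  \sqrt\eps\,K\ast\bigl(\eps^{-d/2}|u_k((\cdot-q_k)/\sqrt\eps)|^2\bigr)(q_j+\sqrt\eps y) = \sqrt\eps K(q_j-q_k)\|a_k\|_{L^2}^2 + \eps\,\nabla K(q_j-q_k)\cdot\bigl(y\|a_k\|_{L^2}^2-\tilde G_k(t)\bigr) + O(\eps^{3/2}).
\end{equation*}
Each piece matches a specific feature of the ansatz: the $O(\sqrt\eps)$ constant-in-$y$ term is absorbed by the modified actions \eqref{eq:actionhalf}, the $O(\eps)$ linear-in-$y$ part by the envelope system \eqref{eq:envhalf}, and the constant-in-$y$ piece $-\eps\nabla K(q_j-q_k)\cdot\tilde G_k$ by the global phase in \eqref{eq:solenvhalf}. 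Cross products $u_1\bar u_2$ in $|\apsi|^2$ contribute $O(\eps^N)$ for every $N$: uniqueness of \eqref{eq:traj} ensures $(q_1(t),p_1(t))\neq(q_2(t),p_2(t))$ for all $t$, so at each time either the centers differ (Schwartz decay of one envelope kills the product) or the momenta differ (non-stationary phase in the convolution against the smooth $K$ yields arbitrary powers of $\eps$).

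The stability step is then standard in form: since $V$ and $K\ast|\psi^\eps|^2$ are real-valued, the first group of terms on the right of the $w^\eps$ equation defines a self-adjoint operator and contributes nothing to $\tfrac{d}{dt}\|w^\eps\|_{L^2}^2$; combined with $\|K\ast(|\psi^\eps|^2-|\apsi|^2)\|_{L^\infty}\le \|K\|_{L^\infty}\||\psi^\eps|^2-|\apsi|^2\|_{L^1}\le C\|w^\eps\|_{L^2}$ and $L^2$-mass conservation, one gets $\eps\tfrac{d}{dt}\|w^\eps\|_{L^2}^2\le C\sqrt\eps\|w^\eps\|_{L^2}^2+C\|r^\eps\|_{L^2}\|w^\eps\|_{L^2}$. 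This closing estimate is the main obstacle: the supercritical coupling forces a $1/\sqrt\eps$ coefficient in the naive Gronwall, which would permit exponential amplification over time intervals independent of $\eps$, so confirming the $O(\sqrt\eps)$ bound requires careful use of the Hartree structure (real symmetric kernel, self-adjointness of the mean-field term, and the localized wave-packet shape of $\apsi$), in the spirit of the single-wave-packet analyses of \cite{APPP11,CaCa11}.
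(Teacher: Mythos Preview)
Your consistency analysis is essentially right, but the stability step has a genuine gap that you yourself flag and then do not close. The estimate
\[
\eps\,\frac{d}{dt}\|w^\eps\|_{L^2}\lesssim \sqrt\eps\,\|w^\eps\|_{L^2}+\|r^\eps\|_{L^2}
\]
after dividing by $\eps$ gives a Gronwall constant of size $1/\sqrt\eps$, hence only
\(
\|w^\eps(t)\|_{L^2}\lesssim \sqrt\eps\,e^{Ct/\sqrt\eps},
\)
which is useless on any fixed interval $[0,T]$. This is exactly the obstruction the paper records in Remark~\ref{rem:sur}: consistency (Corollary~\ref{cor:reste}) does \emph{not} imply convergence in the supercritical cases $\alpha<1$. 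Invoking ``self-adjointness of the mean-field term'' does not help here, because the dangerous contribution is not $(K\ast|\psi^\eps|^2)w^\eps$ (which is indeed harmless) but $(K\ast(|\psi^\eps|^2-|\apsi|^2))\apsi$, which is genuinely linear in $w^\eps$ with a coefficient of order $\eps^{-1/2}$ after dividing by $\eps$; no amount of symmetry of $K$ removes that factor.

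The paper's cure is a change of unknown rather than a sharper energy estimate on $w^\eps$. One writes the exact solution itself in two-wave-packet form,
\(
\psi^\eps=\eps^{-d/4}\sum_j u_j^\eps\bigl(t,\tfrac{x-q_j}{\sqrt\eps}\bigr)e^{i\phi_j^\eps/\eps},
\)
and derives equations for the \emph{exact} envelopes $u_j^\eps$ in the moving frames (Section~\ref{sec:scheme}). In those variables the Hartree potential becomes $\sqrt\eps\int K(\sqrt\eps(y-z))|u_k^\eps(z)|^2\,dz$; after subtracting the pieces already built into $S_j^\eps$ and $(q_j,p_j)$, what remains is a potential that is at most quadratic in $y$ with \emph{bounded} ($\eps$-independent) Hessian, plus an $O(\sqrt\eps)$ source. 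The comparison $w_j^\eps=\tilde u_j^\eps-u_j$ then satisfies a system whose Gronwall constant is $O(1)$, and a bootstrap in $\Sigma^2$ (Remark~\ref{rem:12}) closes. The regularity hypotheses $V\in C^5$, $K\in W^{5,\infty}$, $a_j\in\Sigma^5$ are dictated by this step: one needs $u_j\in C([0,T];\Sigma^5)$ so that the Taylor remainder of order three, multiplied by $|y|^3|u_j|$, stays in $\Sigma^2$. Your proposal uses these hypotheses only for the consistency bound, which does not explain why they are stronger than in Theorem~\ref{theo:critical}.
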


\subsection{Case $\alpha=0$}
\label{sec:super0}
In this last case, nonlinear interferences affect
even the geometric properties of the wave packets, in contrast with
the cases $\alpha=1$ and $\alpha=1/2$. The trajectories
are required to evolve according to the system
\begin{equation}\label{eq:trajmodif}
  \left\{
\begin{aligned}
 & \dot q_1(t) = p_1(t),\\
& \dot p_1(t) = -\nabla V\(t,q_1(t)\) -
  \|a_1\|_{L^2}^2 \nabla K(0)- \|a_2\|_{L^2}^2 \nabla
  K\(q_1(t)-q_2(t)\),\\
&\dot q_2(t) = p_2(t),\\
& \dot p_2(t) = -\nabla V\(t,q_2(t)\) -
  \|a_2\|_{L^2}^2 \nabla K(0)- \|a_1\|_{L^2}^2 \nabla
  K\(q_2(t)-q_1(t)\).
\end{aligned}
\right.
\end{equation}
Unless $\nabla K$ is a constant (which would implies that $K$ is
constant, a trivial case), one cannot decouple the
unknowns $(q_1,p_1)$ and $(q_2,p_2)$: the coupling cannot by undone,
and the ``good unknown'' is $(q_1,p_1,q_2,p_2)\in \R^{4d}$.
In view of Assumption~\ref{hyp:gen}, Cauchy--Lipschitz Theorem implies:
\begin{lemma}\label{lem:cauchylip}
  For $j=1,2$, let $(q_{j0},p_{j0})\in \R^{2d}$. If $V$ and $K$ satisfy
  Assumption~\ref{hyp:gen}, then \eqref{eq:trajmodif} has a unique
  solution $(q_1,p_1,q_2,p_2)\in C^3(\R_+;\R^{4d})$. 
\end{lemma}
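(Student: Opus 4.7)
The plan is to recast \eqref{eq:trajmodif} as a first-order system $\dot Y = F(t,Y)$ on $\R^{4d}$, with $Y=(q_1,p_1,q_2,p_2)$, and apply the Cauchy--Lipschitz (Picard--Lindelöf) theorem in its global form. The main work is to check that $F$ is continuous in $t$ and globally Lipschitz in $Y$ uniformly on $\R_+$; the rest will be routine bootstrapping.

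First I would verify the Lipschitz estimate explicitly. The ``position'' equations $\dot q_j=p_j$ are linear in $Y$. In the ``momentum'' equations, the term $\nabla V(t,q_j)$ has Jacobian $\nabla^2 V$, which is in $L^\infty(\R_+\times\R^d)$ by Assumption~\ref{hyp:gen}, so $\nabla V(t,\cdot)$ is Lipschitz in $x$ uniformly in $t$. Similarly, $\nabla K(q_1-q_2)$ is a Lipschitz function of $(q_1,q_2)$ because $\nabla^2 K\in L^\infty(\R^d)$. The constant term $\nabla K(0)$ and the scalars $\|a_j\|_{L^2}^2$ do not depend on $Y$. Continuity of $F$ in $t$ follows from $V\in C^3(\R_+\times\R^d)$. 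Since the Lipschitz constants do not depend on $Y$, Cauchy--Lipschitz produces a unique maximal $C^1$ solution, and a standard Grönwall argument applied to $|Y(t)|$ on any compact subinterval rules out finite-time blow-up, yielding a global solution in $C^1(\R_+;\R^{4d})$.

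Next I would bootstrap the regularity. Given $Y\in C^1$, the map $t\mapsto \nabla V(t,q_j(t))$ belongs to $C^1$ because $\nabla V\in C^2(\R_+\times\R^d)$ and $q_j\in C^1$; the map $t\mapsto \nabla K(q_1(t)-q_2(t))$ is $C^1$ for the same reason. Hence $\dot Y=F(t,Y(t))\in C^1$, which promotes $Y$ to $C^2$. Iterating the argument once more uses $\nabla V\in C^2$, $\nabla K\in C^2$, and now $Y\in C^2$, yielding $\dot Y\in C^2$ and thus $Y\in C^3$; this exhausts the available smoothness in Assumption~\ref{hyp:gen}.

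The only conceivable obstacle would be the appearance of nonlinear growth in $Y$ threatening blow-up in finite time, but this does not occur: the only coupling between positions and momenta is the trivially linear identity $\dot q_j=p_j$, and all force terms are bounded and globally Lipschitz in $Y$ thanks to the $L^\infty$ bounds on $\nabla^2 V$, $\nabla K$ and $\nabla^2 K$. In short, the statement reduces to a textbook application of Cauchy--Lipschitz together with a standard regularity bootstrap, with no substantive difficulty to overcome.
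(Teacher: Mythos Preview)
Your proposal is correct and matches the paper's approach: the paper simply states that the lemma follows from the Cauchy--Lipschitz theorem in view of Assumption~\ref{hyp:gen}, without giving any further details. Your write-up is therefore more explicit than the paper itself, but the underlying argument is identical.
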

\begin{remark}[Hamiltonian structure]\label{rem:hamil}
  If the external potential $V$ does not depend on time, $\d_t V=0$,
  and the kernel $K$ is even, $K(-x)=K(x)$ for all $x\in\R^d$, then
  the Hartree equation \eqref{eq:r3alpha} has a Hamiltonian
  structure. In the case $\alpha=0$, the following energy is
  independent of $t$,
  \begin{equation*}
    \frac{\eps^2}{2}\|\nabla\psi^\eps(t)\|_{L^2}^2+
\int_{\R^d}V(x)|\psi^\eps(t,x)|^2 
    dx +
    \frac{1}{2}\iint_{\R^{2d}}K(x-y)|\psi^\eps(t,y)|^2|\psi^\eps(t,x)|^2dxdy.
  \end{equation*}
Note that since $K$ is even, $\nabla K(0)=0$, and $\nabla K(q_2-q_1)=-
\nabla K(q_1-q_2) $.
In that case, the system of modified trajectories \eqref{eq:trajmodif}
is also Hamiltonian, as can be seen from the approach presented in
\cite{GHM08}. Given the state variable $z=
(q_1,p_1,q_2,p_2)^T$, let
\begin{align*}
  H(t,z)= \alpha_1\(\frac{1}{2}|p_1|^2+V(q_1)\) +
  \alpha_2\(\frac{1}{2} |p_2|^2+V(q_2) \)+
 \alpha_1\alpha_2 K\(q_1-q_2\), 
\end{align*}
where $\alpha_j=\|a_j\|_{L^2}^2$. The system \eqref{eq:trajmodif}
has the Hamiltonian structure 
\begin{equation*}
  \frac{dz}{dt} = JD_z H(t,z)\quad\text{with}\quad J=
  \begin{pmatrix}
    0 & 1/\alpha_1 & 0 & 0 \\
-1/\alpha_1 & 0 & 0 & 0\\
0 & 0&0 &1/\alpha_2\\
0 & 0&-1/\alpha_2 &0
  \end{pmatrix}.
\end{equation*}
One checks indeed that the function $H$ is conserved along solutions
of \eqref{eq:trajmodif}.
\end{remark}
Before defining the modified actions, we have to construct also the
envelopes. Consider the coupled, nonlinear system
\begin{equation}
  \label{eq:systenv0}
  \left\{
\begin{aligned}
  i\d_t & u_1 +\frac{1}{2}\Delta u_1= \frac{1}{2}\<y, M_1(t)y\>u_1 -
\<\nabla^2K(0) G_1(t),y\>u_1  \\
& -
\<\nabla^2K(q_1-q_2) G_2(t),y\>u_1 +\frac{1}{2} \(\int
\<z,\nabla^2K(0)z\>|u_1(t,z)|^2dz\) u_1\\
&+\frac{1}{2}\( \int
\<z,\nabla^2K(q_1-q_2)z\>|u_2(t,z)|^2dz\) u_1, \\
i\d_t & u_2 +\frac{1}{2}\Delta u_2= \frac{1}{2}\<y, M_2(t)y\>u_2 -
\<\nabla^2K(0) G_2(t),y\>u_2  \\
&-
\<\nabla^2K(q_2-q_1) G_1(t),y\>u_2 +\frac{1}{2} \(\int
\<z,\nabla^2K(0)z\>|u_2(t,z)|^2dz\) u_2\\
&+\frac{1}{2} \(\int
\<z,\nabla^2K( q_2-q_1)z\>|u_1(t,z)|^2dz\) u_2, 
\end{aligned}
\right.
\end{equation}
where the functions $q_j$ are assessed at time $t$, and we have denoted 
\begin{align}
  G_j(t)&=\int_{\R^d}z|u_j(t,z)|^2dz,\quad j=1,2,\label{def:Gj}\\
M_1(t)&=\|a_1\|_{L^{2}(\R^{d})}^{2}\nabla^2
K(0)+\|a_2\|_{L^{2}(\R^{d})}^{2}\nabla^2 K\(q_1(t)-q_2(t)\)\label{def:M1}\\
&\quad+
\nabla^2_xV\(t,q_1(t)\),\notag\\
M_2(t)&=\|a_2\|_{L^{2}(\R^{d})}^{2}\nabla^2
K(0)+\|a_1\|_{L^{2}(\R^{d})}^{2}\nabla^2 K\( q_2(t)-q_1(t)\)\label{def:M2}\\
&\quad+
\nabla^2_xV\(t,q_2(t)\).\notag
\end{align}
The system defining the envelopes is more nonlinear than the cases
$\alpha=1$ and $\alpha=1/2$, and, as in the case $\alpha=1/2$,
involves nonlinear terms which are non-local in space.
In Section~\ref{sec:refined}, we prove the following result:
\begin{proposition}\label{prop:existenv0}
  Let $(q_1,p_1,q_2,p_2)$ be given by Lemma~\ref{lem:cauchylip}, and
  $a_1,a_2\in \Sigma^k$ with $k\ge 1$. Then \eqref{eq:systenv0} has a
  unique solution $(u_1,u_2)\in C(\R_+;\Sigma^k)$ with initial data
  $(a_1,a_2)$. In addition, 
  the following conservations hold:
  \begin{equation*}
    \|u_j(t)\|_{L^2(\R^d)}= \|a_j\|_{L^2(\R^d)},\quad \forall t\ge 0,\
    j=1,2. 
  \end{equation*}
\end{proposition}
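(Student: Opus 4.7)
The plan is to view \eqref{eq:systenv0} as a coupled pair of linear Schrödinger equations with time-dependent potentials that are polynomial of degree $\le 2$ in $y$, the coefficients depending non-locally on $(u_1,u_2)$ only through a few real-valued moments. Local-in-time existence and uniqueness will follow from a contraction-mapping argument; an a priori conservation of the $L^2$ norm will then drive both globalization and propagation of $\Sigma^k$ regularity. Indeed, the first observation is that every nonlinear term in \eqref{eq:systenv0} is the multiplication of $u_j$ by a real-valued function of $(t,y)$: $\nabla^2 K$ is a real symmetric matrix, each $G_j(t)$ is a real vector, and each scalar $\int\<z,\nabla^2K(\cdot)z\>|u_k|^2\,dz$ is real. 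A standard energy computation, taking the imaginary part of $\int\overline{u_j}(i\d_t u_j)\,dy$, therefore yields $\|u_j(t)\|_{L^2}=\|a_j\|_{L^2}$ for as long as the solution exists.

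For local existence I would set up a Picard iteration. Given $(v_1,v_2)\in C([0,T];\Sigma)^2$ with $v_j(0)=a_j$, substituting $v_j$ into the moments appearing as coefficients in \eqref{eq:systenv0} yields two uncoupled linear Schrödinger equations with time-dependent potentials
\[
W_j(t,y) = \tfrac12\<y,M_j(t)y\> - \<\nabla^2K(0)\tilde G_j(t)+\nabla^2K(q_j-q_k)\tilde G_k(t),y\> + \tilde c_j(t),
\]
where $\tilde G_j, \tilde c_j$ denote the corresponding moments of $v_j$. By Assumption~\ref{hyp:gen} and Lemma~\ref{lem:cauchylip}, $W_j$ is polynomial of degree $\le 2$ in $y$ with continuous-in-$t$ coefficients. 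The classical theory of at-most-quadratic Schrödinger operators invoked in Lemma~\ref{lem:exist12} then provides a unique unitary propagator $U_j^{(v_1,v_2)}(t,s)$ on $L^2$ that preserves $\Sigma^k$ and depends continuously on the coefficients. Setting $\Phi_j(v_1,v_2)(t)=U_j^{(v_1,v_2)}(t,0)a_j$, the maps $v\mapsto\tilde G_j,\tilde c_j$ are locally Lipschitz from $C([0,T];\Sigma)$ to $C([0,T])$ since the relevant moments are quadratic in $v$; combined with continuous dependence of the propagator on $W_j$, this makes $\Phi$ a contraction on a suitable closed ball for $T$ sufficiently small, giving a unique local solution $(u_1,u_2)\in C([0,T^*);\Sigma)^2$.

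Globalization and propagation of $\Sigma^k$ regularity then follow from commutator estimates and Gronwall. Applying operators of the form $y^\alpha\d_y^\beta$ with $|\alpha|+|\beta|\le k$ to \eqref{eq:systenv0}, and using that each $W_j$ is polynomial of degree at most $2$ in $y$, the commutators $[y^\alpha\d_y^\beta,W_j]$ reduce to operators of order $\le k$ in $(y,\d_y)$ whose coefficients are bounded thanks to Assumption~\ref{hyp:gen} and the already-established $L^2$ conservation (which, in turn, controls $G_j$ and $\tilde c_j$ by $\|u_j\|_\Sigma$). A standard energy inequality closes on the full $\Sigma^k$-norm, and Gronwall yields $\|u_j(t)\|_{\Sigma^k}\le C(T)\|a_j\|_{\Sigma^k}$ on any finite interval, hence $T^*=+\infty$. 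The delicate point, and the reason why $k\ge 1$ is required, is that the scalar multipliers $\int\<z,\nabla^2K(\cdot)z\>|u_k|^2\,dz$ only make sense when $u_k\in\Sigma$, so the $\Sigma$-norm must be tamed first in the Gronwall argument before higher-order energies can even be written down. Uniqueness in $C(\R_+;\Sigma^k)^2$ finally follows from an analogous $L^2$ estimate on the difference of two solutions.
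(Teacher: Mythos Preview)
Your local existence argument and the $L^2$ conservation are fine, and in fact your contraction/Picard scheme is close in spirit to what the paper does. The genuine gap is in the globalization step. You write that ``a standard energy inequality closes on the full $\Sigma^k$-norm, and Gronwall yields $\|u_j(t)\|_{\Sigma^k}\le C(T)\|a_j\|_{\Sigma^k}$ on any finite interval, hence $T^*=+\infty$.'' But the coefficients of the linear-in-$y$ part of $W_j$ involve $G_1(t),G_2(t)$, and the only a priori control you have on $G_k(t)=\int z|u_k(t,z)|^2dz$ is $|G_k(t)|\le \|a_k\|_{L^2}\|yu_k(t)\|_{L^2}\lesssim \|u_k(t)\|_\Sigma$. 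Plugging this into the $\Sigma$ energy estimate gives an inequality of the Riccati type,
\[
\frac{d}{dt}\bigl(\|u_1\|_\Sigma+\|u_2\|_\Sigma\bigr)\lesssim \bigl(1+\|u_1\|_\Sigma+\|u_2\|_\Sigma\bigr)\bigl(\|u_1\|_\Sigma+\|u_2\|_\Sigma\bigr),
\]
which is quadratic, not linear, and does \emph{not} preclude finite-time blow-up. So your Gronwall step does not yield $T^*=+\infty$.

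The paper closes this gap by an additional (and rather nice) observation: the first moments $G_j$ satisfy a \emph{closed linear} second-order ODE system. Differentiating twice and using the equation, one finds $\ddot G_j + M_j(t)G_j$ equals a linear combination of $G_1,G_2$ with bounded coefficients (the key point being that the forcing $b_j(t)$ in $W_j$ is itself linear in $G_1,G_2$, and $\|u_j\|_{L^2}$ is conserved). Hence $(G_1,\dot G_1,G_2,\dot G_2)$ obeys a linear first-order system, and Gronwall gives at-most-exponential growth on any $[0,T]$. With $G_j$ controlled independently of the $\Sigma$-norm, the potential $W_j$ now has coefficients bounded on $[0,T]$, the $\Sigma^k$ energy estimate becomes genuinely linear, and globalization follows. (The paper also removes the purely $t$-dependent scalar terms by a gauge transform beforehand; this is convenient but, as you implicitly note, not essential, since those terms commute with $y$ and $\nabla$.) To fix your argument, you need to insert this moment-ODE step between local existence and the $\Sigma^k$ energy estimates.
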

We can then define the modified, $\eps$-dependent actions:
\begin{align*}
  S_1^\eps(t) &= \int_0^t \Big(\frac{1}{2}|p_1(s)|^2 - V\(s,q_1(s)\)
 - K(0)\|a_1\|_{L^2}^2- K\(q_1(s)-q_2(s)\)\|a_2\|_{L^2}^2 \\
&\phantom{= \int_0^t \Big(}+\sqrt \eps
  \nabla K(0)\cdot G_1(s)+\sqrt\eps \nabla K\(q_1(s)-q_2(s)\)\cdot
  G_2(s)\Big)ds,\\
     S_2^\eps(t) &= \int_0^t \Big(\frac{1}{2}|p_2(s)|^2 - V\(s,q_2(s)\)
 - K(0)\|a_2\|_{L^2}^2- K\(q_2(s)-q_1(s)\)\|a_1\|_{L^2}^2 \\
&\phantom{= \int_0^t \Big(}+\sqrt \eps
  \nabla K(0)\cdot G_2(s)+\sqrt\eps \nabla K\(q_2(s)-q_1(s)\)\cdot
  G_1(s)\Big)ds.
\end{align*}
\begin{theorem}\label{theo:alpha0}
  Let $d\ge 1$. Assume 
  that $V$ and $K$ are real-valued and satisfy:
 \begin{align*}
 & V\in C^7(\R_{+}\times \R^d;\R),\quad \text{and}\quad   \d_x^\beta V\in
 L^\infty\(\R_{+}\times\R^d\),\quad  2\le |\beta|\le 7.\\
 &K\in W^{7,\infty}(\R^d;\R). 
 \end{align*}
Let  $a_1,a_2\in \Sigma^7$. There
  exist $\theta_1,\theta_2\in C^2(\R_+;\R)$, with
 $   \theta_j(0)=\dot \theta_j(0)=0,$
such   that the following holds. 
For any $T>0$ independent of $\eps$, there
  exists $C>0$ independent of $\eps$ such that
  \begin{equation*}
  \sup_{t\in [0,T]}\left\| \psi^\eps(t)-
 \sum_{j=1,2}\varphi_j^\eps(t)
e^{i\theta_j(t)}
\right\|_{L^2(\R^d)}\le C\sqrt\eps,
  \end{equation*}
where we have denoted $\displaystyle \varphi_j^\eps(t,x) = \eps^{-d/4} u_j 
\left(t,\frac{x-q_j(t)}{\sqrt\eps}\right)e^{i\left(S_j^\eps(t)+p_j(t)\cdot
    (x-q_j(t))\right)/\eps}$. 
\end{theorem}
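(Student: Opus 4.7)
The proof uses WKB / wave packet analysis in the spirit of \cite{APPP11,CaCa11,CaFe11}, adapted to two coupled wave packets in the fully supercritical regime. I would assemble the building blocks ($(q_j,p_j)$ from Lemma~\ref{lem:cauchylip}, the envelopes $(u_1,u_2)\in C(\R_+;\Sigma^7)$ from Proposition~\ref{prop:existenv0}, and the actions $S_j^\eps$), set $\apsi=\sum_{j=1,2}\varphi_j^\eps e^{i\theta_j}$, and determine the scalar phases $\theta_j(t)$ by a matching condition in the residual expansion (Step~2). With the initial condition $\theta_j(0)=0$, the equality $\dot\theta_j(0)=0$ will follow from the matching being trivially satisfied at $t=0$, yielding $\theta_j\in C^2(\R_+;\R)$.

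The heart of the proof is the asymptotic analysis of the residual $R^\eps:=i\eps\partial_t\apsi+\tfrac{\eps^2}{2}\Delta\apsi-V\apsi-(K\ast|\apsi|^2)\apsi$. Introducing the local variable $y_j=(x-q_j(t))/\sqrt\eps$, I Taylor-expand $V(t,x)$ around $x=q_j(t)$ up to order four (which requires $V\in C^7$) and handle the Hartree term by writing $K\ast|\varphi_\ell^\eps|^2(q_j+\sqrt\eps y)$ as $\int K(q_j-q_\ell+\sqrt\eps(y-w))|u_\ell(w)|^2\,dw$ and Taylor-expanding $K$ around $q_j-q_\ell$ to the same order (requiring $K\in W^{7,\infty}$). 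At $O(1)$, the scalar balance fixes the leading part of $\dot S_j^\eps$; at $O(\sqrt\eps)$, the $y$-linear terms reproduce the modified Newton equation \eqref{eq:trajmodif}, and the $y$-independent terms match the $\sqrt\eps$-correction of $\dot S_j^\eps$ through the first moments $G_j$; at $O(\eps)$, the $y$-quadratic, $y$-linear and $y$-independent ``expectation'' contributions together reproduce the envelope system \eqref{eq:systenv0} via the matrices $M_j$ and moments $G_j$, up to a $y$-independent residual that defines $\dot\theta_j(t)$. The leftover is a polynomial in $y$ of degree at most three with prefactor $\eps^{3/2}$. Since the change of variables $x\mapsto y_j$ is $L^2$-isometric (up to the $\eps^{-d/4}$ normalization) and $u_j$ is bounded in $\Sigma^3$ uniformly on $[0,T]$, one obtains $\|R^\eps(t)\|_{L^2(\R^d)}\le C\eps^{3/2}$. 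The cross products $\varphi_1^\eps\overline{\varphi_2^\eps}$ entering $|\apsi|^2$ have $L^1$-norm much smaller than any fixed power of $\eps$, thanks to the polynomial decay inherited from $u_j\in\Sigma^7$ and to the separation $|q_1(t)-q_2(t)|\ge c>0$ on $[0,T]$ (which holds by continuity of the nonlinear flow from $q_{10}\neq q_{20}$); they contribute harmlessly to the residual.

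Setting $w^\eps=\psi^\eps-\apsi$, which satisfies $w^\eps(0)=0$ and
\[
i\eps\partial_t w^\eps+\tfrac{\eps^2}{2}\Delta w^\eps=Vw^\eps+(K\ast|\psi^\eps|^2)w^\eps+\bigl(K\ast(|\psi^\eps|^2-|\apsi|^2)\bigr)\apsi-R^\eps,
\]
the standard $L^2$ energy identity (pair with $\overline{w^\eps}$, take the imaginary part) kills the real potential $V+K\ast|\psi^\eps|^2$, while the third term on the right is bounded by $C\|w^\eps\|_{L^2}^2$ using $K\in L^\infty$, the identity $|\psi^\eps|^2-|\apsi|^2=2\RE(\overline{\apsi}w^\eps)+|w^\eps|^2$, and the $L^2$-conservations for $\psi^\eps$ and for each $u_j$. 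A Gronwall argument as in \cite{CaCa11}, together with a bootstrap on the expected bound $\|w^\eps\|_{L^2}\le C\sqrt\eps$, closes the estimate and yields the theorem. The main obstacle is the explicit bookkeeping of the residual expansion: the entire data $(q_j,p_j,S_j^\eps,u_j,\theta_j)$ is tightly coupled, and each order of the asymptotic match must be verified by hand; the regularity hierarchy $V\in C^7$, $K\in W^{7,\infty}$, $a_j\in\Sigma^7$ is exactly what is needed both to push the Taylor expansions to the order making the remainder $O(\eps^{3/2})$ in $L^2$ and to propagate the $\Sigma^7$-norm of the envelopes over $[0,T]$ through \eqref{eq:systenv0} (Proposition~\ref{prop:existenv0}).
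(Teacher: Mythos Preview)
Your proposal has a genuine gap in the closing step. In the supercritical case $\alpha=0$, the Hartree term in \eqref{eq:r3alpha} carries no power of $\eps$, so the energy estimate for $w^\eps=\psi^\eps-\apsi$ reads
\[
\|w^\eps(t)\|_{L^2}\le \frac{1}{\eps}\int_0^t\Big(\big\|\big(K\ast(|\psi^\eps|^2-|\apsi|^2)\big)\apsi\big\|_{L^2}+\|R^\eps\|_{L^2}\Big)\,ds.
\]
The term $K\ast\big(2\RE(\overline{\apsi}w^\eps)\big)\,\apsi$ is only \emph{linear} in $w^\eps$ and does not drop out of the imaginary part (the factor $\apsi\overline{w^\eps}$ is not real), so you are left with a Gronwall constant of order $1/\eps$. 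Even with $\|R^\eps\|_{L^2}=\O(\eps^{3/2})$, this yields $\|w^\eps(t)\|_{L^2}\le C\sqrt\eps\,e^{Ct/\eps}$, which is useless on $[0,T]$; no bootstrap on $\|w^\eps\|_{L^2}\le C\sqrt\eps$ can repair this, since the problematic term is linear, not quadratic. This is exactly the obstruction spelled out in Remark~\ref{rem:sur}: consistency of $\apsi$ (Corollary~\ref{cor:reste}) does not imply stability when $\alpha<1$.

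The paper circumvents this by \emph{changing unknowns to the envelope level}: it writes $\psi^\eps$ as in \eqref{eq:psisurcrit} with unknown $u_j^\eps$, removes the singular $1/\sqrt\eps$ terms via the gauge $\tilde u_j^\eps=u_j^\eps e^{-i\theta_j^\eps}$, and runs the bootstrap in $\Sigma^3$ on $\tilde u_j^\eps-u_j$ (Theorem~\ref{theo:final}). Working in $\Sigma^3$ is forced because the rectangle term $\tilde W_j^\eps/\eps$ must be $o(1)$, and Proposition~\ref{prop:microloc} delivers $\O(\eps^{3/2})$ only with $\Sigma^3$ control. The phases $\theta_j$ are then obtained not by a pointwise matching at $O(\eps)$ (your expansion leaves nothing there once \eqref{eq:systenv0} is imposed), but as the limit of $\theta_j^\eps$, which requires the second-order corrector $u_j^{(1)}$ of Section~\ref{sec:DA2}; this is why the regularity goes up to $\Sigma^7$.

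Two smaller points: your treatment of the cross products $\varphi_1^\eps\overline{\varphi_2^\eps}$ assumes $|q_1(t)-q_2(t)|\ge c>0$, but only the phase-space separation of Lemma~\ref{lem:microloc} holds in general (take $q_{10}=q_{20}$, $p_{10}\ne p_{20}$); the paper needs both the spatial-decay argument and an integration by parts in the oscillatory integral (Proposition~\ref{prop:microloc}). And since at order $\eps$ the envelope system \eqref{eq:systenv0} already absorbs all $y$-independent contributions, your proposed definition of $\dot\theta_j$ as the residual $y$-independent term at that order would give $\theta_j\equiv 0$, contradicting the explicit formula \eqref{eq:theta}.
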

In general, the phase shifts $\theta_j$ are not identically zero. In
Section~\ref{sec:DA2}, we give the expression of these functions,
which is probably a bit too involved to present at this stage (see
Equation~\eqref{eq:theta}), and
check that in general, $(\ddot \theta_1(0),\ddot \theta_2(0))\not
=(0,0)$. Such modifications do not appear in the case of a single wave
packet studied in \cite{APPP11,CaCa11}. Technically, the reason is
two-fold. First, in  \cite{APPP11,CaCa11}, it is assumed that $\nabla
K(0)=0$, so the first line in \eqref{eq:theta} vanishes. Then, the
second line in \eqref{eq:theta} accounts for the presence of two wave
packets, and measures some coupling through a phase modulation: it
vanishes in the case of a single wave packet.  

\subsection{Comments}
\label{sec:comments}

\subsubsection*{The results}

In the three cases studied here, the
interferences are nonlinear.
They always affect the envelopes. In the case $\alpha=1/2$, they
affect moreover the action, and in the case $\alpha=0$,  the
system \eqref{eq:trajmodif} reveals a nonlinear coupling of
the trajectories, so all the terms involved in $\apsi$ are influenced
by the nonlinearity. Our results are original
even in the case $V=0$. 

Nonlinear interferences always carry a
non-local in time aspect. Even if $K$ is decaying at infinity, the
interactions ignore the mutual 
distance of the two wave packets: no matter how large $q_1(t)-q_2(t)$
is, nonlinear interferences affect the solution at order $\O(1)$ on
finite time intervals, as $\eps\to 0$.   

\smallbreak

Our results yield a unified picture concerning Wigner measures (see
e.g. \cite{BurqMesures,GMMP,LionsPaul}):
\begin{corollary}
  In all the cases $\alpha=1$, $\alpha=1/2$ and $\alpha=0$, and under
  the Assumptions of Theorem~\ref{theo:critical},
  Theorem~\ref{theo:alpha12} and Theorem~\ref{theo:alpha0},
  respectively, the Wigner measure of $\psi^\eps$ is given by
  \begin{equation*}
    w(t, x,
\xi)=\sum_{j=1,2}\|a_j\|_{L^{2}(\R^{d})}^2\delta\(x-q_j(t)\)\otimes
\delta \(\xi-p_j(t)\),
  \end{equation*}
with $(q_j,p_j)$ given by the standard Hamiltonian flow
\eqref{eq:traj} in the cases $\alpha=1$ and $\alpha=1/2$, and 
$(q_1,p_1,q_2,p_2)$ given by \eqref{eq:trajmodif} in the case $\alpha=0$. 
\end{corollary}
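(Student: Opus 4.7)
The plan is to deduce the corollary from the three $L^2$-approximation theorems combined with the standard asymptotic orthogonality of coherent states localized at distinct points in phase space. In all three cases, mass conservation gives $\|\psi^\eps(t)\|_{L^2}=\mathcal{O}(1)$, and Theorems~\ref{theo:critical}, \ref{theo:alpha12}, and~\ref{theo:alpha0} supply an approximation of the form $\sum_{j=1,2}\varphi_j^\eps(t) e^{i\theta_j(t)}$ (with $\theta_j\equiv 0$ when $\alpha=1$ or $1/2$) such that
\[
\Bigl\|\psi^\eps(t)-\sum_{j=1,2}\varphi_j^\eps(t) e^{i\theta_j(t)}\Bigr\|_{L^2(\R^d)}=\mathcal{O}(\sqrt\eps).
\]
Since the sesquilinear semiclassical Wigner transform satisfies $|\<W^\eps[u,v],\phi\>|\le C_\phi\|u\|_{L^2}\|v\|_{L^2}$ uniformly in $\eps$ for every $\phi\in\mathcal{S}(\R^{2d})$ (by Calder\'on--Vaillancourt applied to the Weyl quantization of $\phi$), splitting $W^\eps[\psi^\eps]-W^\eps[\sum_j\varphi_j^\eps e^{i\theta_j}]$ into two sesquilinear pieces shows that the Wigner measures of $\psi^\eps$ and of the approximation coincide. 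The scalar phases $e^{i\theta_j(t)}$ leave each $W^\eps[\varphi_j^\eps]$ unchanged and produce only a unimodular prefactor in the cross term, which turns out to vanish anyway.

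Next I would compute the diagonal contributions. For a single coherent state $\varphi_j^\eps=\eps^{-d/4}u_j(t,(x-q_j(t))/\sqrt\eps)e^{i(S_j^\eps(t)+p_j(t)\cdot(x-q_j(t)))/\eps}$, the phase pieces $S_j^\eps/\eps$ and $p_j\cdot(x-q_j)/\eps$ cancel between $\varphi_j^\eps(x+\eps y/2)$ and $\overline{\varphi_j^\eps(x-\eps y/2)}$ up to a factor $e^{ip_j(t)\cdot y}$. After the change of variable $x=q_j(t)+\sqrt\eps X$, the $y$-integration produces a Dirac mass $\delta(\xi-p_j(t))$ in the limit, and the remaining $X$-integration yields $\|u_j(t)\|_{L^2}^2\,\phi(q_j(t),p_j(t))=\|a_j\|_{L^2}^2\,\phi(q_j(t),p_j(t))$, using the $L^2$-conservation of the envelope (built into \eqref{eq:envcrit}, Lemma~\ref{lem:exist12}, or Proposition~\ref{prop:existenv0}, according to the case).

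The main step is the vanishing of the cross term $W^\eps[\varphi_1^\eps,\varphi_2^\eps]$ tested against $\phi\in\mathcal{S}(\R^{2d})$. After the analogous rescaling $x=(q_1(t)+q_2(t))/2+\sqrt\eps X$, the envelopes $u_j$ are evaluated at arguments containing the shift $\pm(q_1(t)-q_2(t))/(2\sqrt\eps)$ and the integrand carries an oscillation of the form $e^{i(p_1(t)-p_2(t))\cdot X/\sqrt\eps}$ plus lower-order pieces; combined with the Schwartz decay of $u_1,u_2$ (and integration by parts in the non-stationary $X$-direction when $q_1=q_2$ but $p_1\neq p_2$), this yields an $\mathcal{O}(\eps^\infty)$ bound whenever $(q_1(t)-q_2(t),p_1(t)-p_2(t))\neq 0$. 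The main obstacle is to ensure that no phase-space coincidence occurs on $[0,T]$. For $\alpha=1,1/2$ this is immediate from uniqueness of the Hamiltonian flow \eqref{eq:traj}. For $\alpha=0$, where \eqref{eq:trajmodif} couples the two trajectories, I would check that the diagonal $\{(q,p,q,p):(q,p)\in\R^{2d}\}$ is invariant under the modified flow: on it $\dot q_1-\dot q_2=p_1-p_2=0$, and at $q_1=q_2$ both $\nabla K(q_1-q_2)$ and $\nabla K(q_2-q_1)$ reduce to $\nabla K(0)$, giving $\dot p_1-\dot p_2=0$ regardless of $\|a_1\|,\|a_2\|$. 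Backward-in-time uniqueness from Lemma~\ref{lem:cauchylip}, combined with the hypothesis $(q_{10},p_{10})\neq(q_{20},p_{20})$, then precludes any coincidence at a later time; by continuity on the compact interval $[0,T]$ the separation is bounded below by a positive constant, which legitimizes the vanishing of the cross term uniformly in $t$ and yields the stated measure.
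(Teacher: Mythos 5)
Your proposal is correct and follows the route the paper points to: approximate $\psi^\eps$ by $\apsi$ in $L^2$ (Theorems~\ref{theo:critical}, \ref{theo:alpha12}, \ref{theo:alpha0}), identify the diagonal Wigner contributions after rescaling, and discard the cross term $W^\eps[\varphi_1^\eps,\varphi_2^\eps]$ using the uniform phase-space separation $|\delta q(t)|+|\delta p(t)|\ge\eta$ on $[0,T]$, which is exactly Lemma~\ref{lem:microloc} of Section~\ref{sec:mixed} (the results the paper says the corollary relies on). Your re-derivation of that separation in the case $\alpha=0$ --- invariance of the diagonal $\{q_1=q_2,\,p_1=p_2\}$ under the coupled flow~\eqref{eq:trajmodif} combined with backward uniqueness from Lemma~\ref{lem:cauchylip} --- is an equivalent, equally valid phrasing of the paper's Gronwall argument.
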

To be complete, the proof of this corollary relies also on the results
established in Section~\ref{sec:mixed}. 
In the two cases $\alpha=1$ and $\alpha=1/2$, the Wigner measure of
$\apsi$ is not affected by the 
nonlinearity, 
even though we have seen
that the Hartree nonlinearity 
does affect the leading order behavior of the wave function, and that
nonlinear exchanges are present at leading order. In the case
$\alpha=0$, nonlinear effects 
alter the Wigner measure, \emph{even when $\nabla K(0)=0$}, a case
which is 
often encountered in Physics (typically, $K(-x)=K(x)$, so the Hartree
nonlinearity 
has an Hamiltonian structure). In other words, the Wigner measure of
$\psi^\eps$ is always affected by nonlinear interferences. This is in
contrast with the case of a single initial wave packet, where the
trajectory $(q,p)$ is modified as if an electric field $\|a\|_{L^2}^2
\nabla K(0)\cdot x$ was added to the initial Hamiltonian
$-\frac{1}{2}\Delta +V$: if $\nabla K(0)=0$, then the Wigner measure
ignores nonlinear effect even in the case $\alpha=0$ (see
\cite{APPP11,CaCa11}).
\smallbreak

Note that if $\nabla
K(0)\not =0$ (a case which is not necessarily physically relevant,
from the above remark), Theorem~\ref{theo:alpha0} is new \emph{even in
  the case of a single wave packet}. 
\smallbreak

In this paper, we treat the case of two initial wave packets: our
approach can easily be generalized to any (finite) number of initial
wave packets, the main difference being that formulas get more and
more involved as the number of initial wave packets increases (but the
main analytical aspects are essentially the same). 
\smallbreak

We have examined the leading order behavior of
the exact solution, up to an error of order $\O(\sqrt\eps)$: like in
\cite{APPP11}, $\psi^\eps$ could be approximated by a series involving
powers of $\sqrt\eps$, up to arbitrary order. This statement is
made more precise in 
\S\ref{sec:DA2} (see in particular Equation~\eqref{eq:DAgeneral}):
to prove Theorem~\ref{theo:alpha0}, the 
asymptotic expansion of the main unknown functions has to be pushed
one step further than in the cases $\alpha=1$ and $\alpha=1/2$. 

\subsubsection*{Comparison with related works}

We briefly give more details concerning the propagation of two wave
packets described in \cite{CaCa11, CaFe11}. Since both cases are rather
similar, we describe the case of a Hartree nonlinearity only
(\cite{CaCa11}). The main difference in the setting is that
\eqref{eq:r3alpha} is replaced with 
\begin{equation*}
  i\eps \d_t \psi^\eps + \frac{\eps^2}{2}\Delta \psi^\eps =
  V(t,x)\psi^\eps +\lambda\eps^\alpha\(|x|^{-\gamma}\ast 
  |\psi^\eps|^2\) \psi^\eps,\quad t\ge 0,\ x\in \R^d,
\end{equation*}
with $\lambda\in \R$ and $0<\gamma<\min (2,d)$. The critical value for
$\alpha$ is $\alpha_c=1+\gamma/2>1$. When
$\alpha=\alpha_c$, the propagation of one initial wave packet is well
approximated by 
\begin{equation*}
  \varphi^\eps(t,x) =\eps^{-d/4}u\(t,
\frac{x-q(t)}{\sqrt{\eps}}\)e^{i(S(t) 
+p(t)\cdot(x-q(t)))/\eps},
\end{equation*}
where $(q,p)$ is given by \eqref{eq:traj}, $S$ is the classical action
\eqref{eq:classicalaction}, and the envelope $u$ solves the nonlinear
equation
\begin{equation*}
  i\d_t u +\frac{1}{2}\Delta u = \frac{1}{2}\<y,\nabla^2V\(t,q(t)\)y\>u
  +\lambda \(|y|^{-\gamma}\ast |u|^2\)u.
\end{equation*}
It is proved that two such wave packets evolve independently from each
other, up to an error which is
$\O\(\eps^{\frac{\gamma}{2(1+\gamma)}}\)$. A way to understand this
result compared to the ones presented here is that since $\alpha_c>1$,
no interference can occur at leading order. 
\smallbreak

There are several results which may seem similar to ours, in the case
of one initial wave packet: see e.g. 
\cite{BJ00,FrTsYa02,GaSi07,HoZw07,JFGS06,Keraani02,KeraaniAA}. In those
papers, the initial amplitude 
$a$ is very specific, since it is a ground state. The propagation and
stability of multi-solitons for the nonlinear Schr\"odinger equation
(without external potential) have been studied in
\cite{MaMe06,MaMeTs06,Per04,RoScSo05} (see also
\cite{Tao09}). In the framework of these papers, the waves do
not interfere.  
\smallbreak

In \cite{ASFS09}, a problem which shares several features with ours is
studied: there is an
external potential, the regime is semiclassical (see \cite{HoZw07}), and
nonlinear. The envelopes of the initial data are two
solitons. The structure 
of the soliton manifold implies some rigidity on the evolution of the
initial data. Eventually, the two waves do not interact at leading
order. 
\smallbreak

On the other hand, in \cite{KrMaRa09}, the case of two solitons for the
Hartree equation has been studied. In this non-semiclassical setting,
and in the absence of an external potential, the authors construct a
solution which behaves, for large time, like the sum of two solitary
waves, whose respective centers in phase space evolve according to the
two-body problem. This feature can be compared to
Theorem~\ref{theo:alpha0} (with $V=0$), where the centers of the wave packets
evolve according to the nonlinear system
\eqref{eq:trajmodif}. Nevertheless, the envelopes are given by the
ground state, and do not evolve with time. The analytical approach is
different: in \cite{KrMaRa09}, a fine study of the Hartree operator
linearized about the soliton is performed, in particular to understand
the spectral properties of this operator. On the other hand, we do not
consider such an operator; a similar approach with general profiles $a_1,a_2$
like we consider would probably be out of reach. 
\smallbreak

In \cite{GrMaSa02,Sa05,Sa09}, a semiclassical regime is studied, in
the presence of an external potential and a nonlinearity. The
potential is a double well potential, and the associated
Hamiltonian  has two
eigenfunctions. For initial data carried by these two eigenfunctions,
it is shown that the nonlinear solution may remain concentrated on the
eigenfunctions, with time-dependent coefficients which interact
nonlinearly. 
\smallbreak

In all the cases mentioned above, the nonlinear interference of the
envelopes is negligible, due to the fact that the envelopes decay
exponentially. In our case, the decay may be much weaker
(algebraic). However, even 
though we have seen that the envelopes always interact nonlinearly in
the cases studied here, we will see that some ``rectangle'' terms are
negligible in the limit $\eps\to 0$, thanks to a microlocal argument
(see Section~\ref{sec:mixed}). 
\smallbreak

We finally point out
  that nonlinear interactions of amplitudes have been analyzed in the
  context of weakly nonlinear geometric optics for Schr\"odinger or
  Hartree equations in various contexts (not to mention the even wider
  literature concerning hyperbolic equations); see for instance
  \cite{CDS10,GMS08}.

\subsubsection*{Notations}

Throughout this paper, $\R_+$ stands for $[0,\infty)$. We also use the
standard convention, for $A\in \R^n$, $n\ge 1$,
\begin{equation*}
  \<A\>=\sqrt{1+|A|^2}. 
\end{equation*}
For two positive numbers $a^\eps$ and $b^\eps$,
the notation $ a^\eps\lesssim b^\eps$
means that there exists $C>0$ \emph{independent of} $\eps$ such that
for all $\eps\in (0,1]$, $a^\eps\le Cb^\eps$.  

\section{Formal derivation}
\label{sec:rough}

We resume the same approach as in the case of a single wave packet
(\cite{CaCa11}), in the case of \eqref{eq:ci2}: from this point of
view the computations below include the ones presented in
\cite{CaCa11}. 

\subsection{The general strategy}

We seek an approximate solution of the form
\begin{equation}
  \label{eq:approx}
  \psi_{\rm app}^\eps(t,x)=\eps^{-d/4} \sum_{j=1,2}u_j 
\left(t,\frac{x-q_j(t)}{\sqrt\eps}\right)e^{i\left(S_j(t)+p_j(t)\cdot
    (x-q_j(t))\right)/\eps},
\end{equation}
for some profiles $u_j$ independent of $\eps$, and some functions $S_j(t)$
to be determined. These functions $S_j$ correspond to the classical
action \eqref{eq:classicalaction} in the linear case. We will see that
according to the value $\alpha$ in \eqref{eq:r3alpha},
the expression of $S_j$ may vary, accounting for nonlinear effects due to the
presence of the Hartree nonlinearity, and so it may be convenient to
consider $\eps$-dependent functions $S_j$.  Also, according to the
value of $\alpha$, the pairs $(q_j,p_j)$ will solve the standard
Hamiltonian system \eqref{eq:traj}, or a modified one. 
Denote 
\begin{equation*}
  \phi_j(t,x) = S_j(t)+p_j(t)\cdot    \(x-q_j(t)\).
\end{equation*}
In the cases $\alpha=0,1/2$ and $\alpha=1$ considered in this paper,
we will see that we can write 
\begin{equation}\label{eq:generalstrategy}
  \begin{aligned}
   i\eps &\d_t \psi_{\rm app}^\eps + \frac{\eps^2}{2}\Delta \psi_{\rm app}^\eps -
   V\psi_{\rm app}^\eps -\eps^\alpha\(K\ast 
  |\psi_{\rm app}^\eps|^2\)\psi_{\rm app}^\eps  = \\
&\eps^{-d/4}\sum_{j=1,2}e^{i\phi_j(t,x)/\eps}
\(b_{0j}+\sqrt \eps b_{1j} + \eps b_{2j} + \eps
r_j^\eps\)\left(t,\frac{x-q_j(t)}{\sqrt\eps}\right), 
  \end{aligned}
\end{equation}
for $b_{ij}$ independent of $\eps$. The
approximate solution 
$\psi_{\rm app}^\eps$ is determined by the conditions
\begin{equation*}
  b_{0j}=b_{1j}=b_{2j}=0,\quad j=1,2.
\end{equation*}
The remaining factor $r_j^\eps$ accounts for the error between the
exact solution $\psi^\eps$ and the approximate solution
$\psi_{\rm app}^\eps$. Introduce two new space variables, which are
naturally associated to each of the two approximating wave packets:
\begin{equation*}
  y_j=\frac{x-q_j(t)}{\sqrt\eps},\quad j=1,2.
\end{equation*}
At this stage, the introduction of these variables may seem very
artificial, since only the $x$ variable will eventually remain. It can
be understood as a change of variable corresponding to the moving
frame of each wave packet. Technically, it will be justified by the
fact, already present in the linear case $K=0$, that the remainders
$r_j^\eps$ will satisfy pointwise estimates of the form 
\begin{equation*}
  \left|r_j^\eps\left(t,\frac{x-q_j(t)}{\sqrt\eps}\right)\right|\lesssim
\sqrt\eps \<y_j\>^3 A_j^\eps(t,y_j)\Big|_{y_j= \frac{x-q_j(t)}{\sqrt\eps}}.
\end{equation*}
The functions $A_j^\eps$ are well localized, in the sense that
$y_j\mapsto \<y_j\>^kA_j^\eps(t,y_j)$ in bounded in $L^2(\R^d)$ at
least for $k=3$ (but possibly for larger $k$'s), while typically, a
function of the form
\begin{equation*}
  \<y_j+\eta \frac{q_1(t)-q_2(t)}{\sqrt\eps}\>^3
  A_j^\eps(t,y_j)
\end{equation*}
cannot be controlled in $L^2(\R^d)$ uniformly in $\eps$ and $t\in [0,T]$ if
$\eta\not =0$. 
\smallbreak

To conclude this subsection, we expand each term on
the left hand side 
of \eqref{eq:generalstrategy} so it has the form of the right hand
side. In the following subsections, we discuss the
outcome according 
to the value $\alpha=0,1/2$ or $\alpha=1$.
\smallbreak

The linear terms are computed as follows:
\begin{align*}
  i\eps\d_t \psi_{\rm app}^\eps &=
  \eps^{-d/4}\sum_{j=1,2}e^{i\phi_j(t,x)/\eps}\(i\eps \d_t 
  u_j - i\sqrt \eps \dot q_j(t)\cdot \nabla u_j - u_j\d_t \phi_j \).\\
\frac{\eps^2}{2}\Delta \psi_{\rm app}^\eps &=
\eps^{-d/4}\sum_{j=1,2}e^{i\phi_j(t,x)/\eps}\( \frac{\eps}{2}\Delta u_j +i\sqrt\eps
p_j(t)\cdot \nabla u_j -\frac{|p_j(t)|^2}{2}u_j\). 
\end{align*}
Here, as well as below, one should remember that the functions are
assessed as in \eqref{eq:approx}. Recalling that the relevant space
variable for $u_j$ is  $y_j$,
we have:
\begin{equation*}
  \d_t \phi_j = \dot S_j(t) + \frac{d}{dt}\(p_j(t)\cdot (x-q_j(t))\) =
  \dot S_j(t) +\sqrt \eps \dot p_j(t)\cdot y_j-p_j(t)\cdot \dot q_j(t).
\end{equation*}
For the linear potential term, we write
\begin{align*}
  V\psi_{\rm app}^\eps &=
  V(t,x)\eps^{-d/4}\sum_{j=1,2}e^{i\phi_j(t,x)/\eps}u_j\(t,y_j\)\\
&=
\eps^{-d/4}\sum_{j=1,2}e^{i\phi_j(t,x)/\eps}
V\(t,q_j(t)+y_j\sqrt\eps\) u_j\(t,y_j\), 
\end{align*}
and we perform a Taylor expansion for $V$ about $x=q_j(t)$:
\begin{align*}
  V\(t,q_j(t)+y_j\sqrt\eps\)u_j(t,y_j) &= V\(t,q_j(t)\) u_j(t,y_j)+ \sqrt \eps 
y_j\cdot \nabla V\(t,q_j(t)\)u_j(t,y_j)\\
&\quad  + \frac{\eps}{2}\<y_j,\nabla^2
V\(t,q_j(t)\)y_j\>u_j(t,y_j) 
+ \eps^{3/2} r_{jV}^\eps(t,y_j),
\end{align*}
with
\begin{equation}\label{eq:estrjV}
  |r_{jV}^\eps(t,y_j)|\le C\<y_j\>^3|u_j(t,y_j)|,
\end{equation}
for some $C$ independent of $\eps$, $t$ and $y_j$, in view of
Assumption~\ref{hyp:gen}. 
In the case $K=0$, we come up with the relations:
\begin{align*}
  b_{0j}^{\rm lin} &= -u_j\( \dot S_j(t)-p_j(t)\cdot \dot q_j(t)
  +\frac{|p_j(t)|^2}{2}+V\(t,q_j(t)\)\) .\\
b_{1j}^{\rm lin}&= -i\( \dot q_j(t)-p_j(t)\)\cdot \nabla u_j-y_j\cdot
\(\dot p_j(t)+ 
\nabla V\(t,q_j(t)\)\) u_j.\\
b_{2j}^{\rm lin} &=  i\d_t u_j +\frac{1}{2}\Delta u_j -
\frac{1}{2}\<y_j,\nabla^2 V\(t,q_j(t)\)y_j\>u_j. 
\end{align*}
For the nonlinear term, the computations are heavier:
\begin{equation*}
  \(K\ast
  |\psi_{\rm app}^\eps|^2\)\psi_{\rm app}^\eps=
  \eps^{-d/4}\sum_{j=1,2}e^{i\phi_j(t,x)/\eps} \(\int
  K(z)|\psi_{\rm app}^\eps(t,x-z)|^2dz\) u_j(t,y_j).
\end{equation*}
Eventually, each envelope $u_j$ will solve a Schr\"odinger equation,
the two equations being coupled. The precise expression of these
equations depends on $\alpha$, but at this stage, we can notice that
for $j=1,2$, $u_j$ solves an equation of the form
\begin{equation}
  \label{eq:ujgen}
  i\d_t u_j +\frac{1}{2}\Delta u_j =
\frac{1}{2}\<y_j,\nabla^2 V\(t,q_j(t)\)y_j\>u_j + F_j u_j,
\end{equation}
where the function $F_j$, accounting for nonlinear effects due to the
Hartree kernel, is \emph{real-valued}. We infer an important property:
the $L^2$-norm of $u_j$ is independent of time,
\begin{equation}\label{eq:masse}
  \|u_j(t)\|_{L^2(\R^d)}=\|a_j\|_{L^2(\R^d)},\quad \forall t\ge 0, \
  j=1,2. 
\end{equation}
At this stage, this is only a formal remark. 
\smallbreak

In the above sum, the variable
$x$ must be expressed in terms of  $y_j$:
\begin{align*}
  K\ast
  |\psi_{\rm app}^\eps|^2&=\int K\(z\)\left\lvert
    \apsi\(t,q_j(t)+\sqrt\eps y_j -z\)\right\rvert^2dz\\
&=\eps^{-d/2}\int K\(z\)\left\lvert
    \sum_{k=1,2}e^{i\phi_k(t,x-z)/\eps}u_k\(t, y_j
    +\frac{q_j(t)-q_k(t)}{\sqrt\eps} 
    -\frac{z}{\sqrt\eps}\)\right\rvert^2dz. 
\end{align*}
Before changing the integration variable, we develop the squared
modulus:
\begin{align*}
  &\left\lvert\sum_{k=1,2}e^{i\phi_k(t,x-z)/\eps}u_k\(t, y_j
    +\frac{q_j(t)-q_k(t)}{\sqrt\eps} 
    -\frac{z}{\sqrt\eps}\)\right\rvert^2=\\
  &\left\lvert u_1\(t, y_j 
    +\frac{q_j(t)-q_1(t)}{\sqrt\eps} 
    -\frac{z}{\sqrt\eps}\)\right\rvert^2+\left\lvert u_2\(t, y_j 
    +\frac{q_j(t)-q_2(t)}{\sqrt\eps} 
    -\frac{z}{\sqrt\eps}\)\right\rvert^2 \\
&+2\RE e^{i\(\phi_1-\phi_2\)/\eps}u_1\(t, y_j 
    +\frac{q_j(t)-q_1(t)}{\sqrt\eps} 
    -\frac{z}{\sqrt\eps}\) \overline u_2\(t, y_j 
    +\frac{q_j(t)-q_2(t)}{\sqrt\eps} 
    -\frac{z}{\sqrt\eps}\),
\end{align*}
where $\phi_1-\phi_2$ stands for $\phi_1(t,x-z)-\phi_2(t,x-z)$. To
ease notations, we shall denote in the rest of this paper:
\begin{equation*}
  \delta q(t)=q_1(t)-q_2(t);\quad \delta p(t)= p_1(t)-p_2(t). 
\end{equation*}
We can write
\begin{equation*}
  \(K\ast
  |\psi_{\rm app}^\eps|^2\)\psi_{\rm app}^\eps=
  \eps^{-d/4}\sum_{j=1,2}e^{i\phi_j(t,x)/\eps} V^{\rm NL}_j(t,y_j) u_j(t,y_j),
\end{equation*}
with 
\begin{align*}
  V^{\rm NL}_1(t,y_1) &= \eps^{-d/2}\int K(z)\Bigg(\left\lvert u_1\(t, y_1 
    -\frac{z}{\sqrt\eps}\)\right\rvert^2+\left\lvert u_2\(t, y_1 
    +\frac{\delta q(t)}{\sqrt\eps} 
    -\frac{z}{\sqrt\eps}\)\right\rvert^2 \\
&+2\RE e^{i\(\phi_1-\phi_2\)/\eps}u_1\(t, y_1
    -\frac{z}{\sqrt\eps}\) \overline u_2\(t, y_1 
    +\frac{\delta q(t)}{\sqrt\eps} 
    -\frac{z}{\sqrt\eps}\)\Bigg)dz,\\
V^{\rm NL}_2(t,y_2) &= \eps^{-d/2}\int K(z)\Bigg(\left\lvert u_1\(t,
  y_2-\frac{\delta q(t)}{\sqrt\eps}
    -\frac{z}{\sqrt\eps}\)\right\rvert^2+\left\lvert u_2\(t, y_2 
    -\frac{z}{\sqrt\eps}\)\right\rvert^2 \\
&+2\RE e^{i\(\phi_1-\phi_2\)/\eps}u_1\(t, y_2-\frac{\delta q(t)}{\sqrt\eps}
    -\frac{z}{\sqrt\eps}\) \overline u_2\(t, y_2 
    -\frac{z}{\sqrt\eps}\)\Bigg)dz.
\end{align*}
Each nonlinear potential $V^{\rm NL}_j$ is the sum of three
terms. The third term in each of these two 
  expressions, involving the product $u_1\overline u_2$, will be  referred to as
  \emph{rectangle term}, as opposed to \emph{squared terms}, involving
  squared moduli. The two rectangle terms are examined  in
  Section~\ref{sec:mixed}, where we show that 
at least on finite time intervals, they are negligible in the limit
$\eps\to 0$, regardless of the value of $\alpha$. Therefore, we now
consider only the squared terms. 
Changing variables in the
integrations and performing a Taylor expansion of the kernel $K$, we
find successively (recall that $G_j$ is
  defined by \eqref{def:Gj}):
\begin{align*}
  \eps^{-d/2}\int K(z)& \left\lvert u_1\(t, y_1 
    -\frac{z}{\sqrt\eps}\)\right\rvert^2dz =\int
  K\(\sqrt\eps(y_1-z)\)\left\lvert u_1\(t, z\)\right\rvert^2dz \\
&= K(0)\|a_1\|_{L^2}^2 + \sqrt \eps \|a_1\|_{L^2}^2 y_1\cdot \nabla
K(0) -\sqrt\eps \nabla K(0)\cdot G_1(t) \\
&+\frac{\eps}{2}
\<y_1,\nabla^2K(0)y_1\> \|a_1\|_{L^2}^2+ \frac{\eps}{2} \int
\<z,\nabla^2K(0)z\>|u_1(t,z)|^2dz\\ 
&- \eps \<\nabla^2K(0)G_1(t),y_1\>+\eps^{3/2}\int r_{11}^\eps
(t,z-y_1)|u_1(t,z)|^2dz, 
\end{align*}
\begin{align*}
  \eps^{-d/2}\int K(z)& \left\lvert u_2\(t, y_1 
    +\frac{\delta q(t)}{\sqrt\eps} 
    -\frac{z}{\sqrt\eps}\)\right\rvert^2dz \\
&=\int
  K\(\delta q(t)+\sqrt\eps(y_1-z)\)\left\lvert u_2\(t, z\)\right\rvert^2dz \\
&= K(\delta q)\|a_2\|_{L^2}^2 + \sqrt \eps \|a_2\|_{L^2}^2 y_1\cdot \nabla
K(\delta q) -\sqrt\eps \nabla K(\delta q)\cdot G_2(t) \\
&+\frac{\eps}{2}
\<y_1,\nabla^2K(\delta q)y_1\> \|a_2\|_{L^2}^2+ \frac{\eps}{2} \int
\<z,\nabla^2K(\delta q)z\>|u_2(t,z)|^2dz\\ 
&- \eps \<\nabla^2K(\delta q)G_2(t),y_1\>+\eps^{3/2}\int r_{12}^\eps
(t,z-y_1)|u_2(t,z)|^2dz,
\end{align*}

\begin{align*}
 \eps^{-d/2}\int K(z)&\left\lvert u_1\(t,
  y_2-\frac{\delta q(t)}{\sqrt\eps}
    -\frac{z}{\sqrt\eps}\)\right\rvert^2 dz\\
&= K(-\delta q)\|a_1\|_{L^2}^2 + \sqrt \eps \|a_1\|_{L^2}^2 y_2\cdot \nabla
K(-\delta q) -\sqrt\eps \nabla K(-\delta q)\cdot G_1(t) \\
&+\frac{\eps}{2}
\<y_2,\nabla^2K(-\delta q)y_2\> \|a_1\|_{L^2}^2+ \frac{\eps}{2} \int
\<z,\nabla^2K(-\delta q)z\>|u_1(t,z)|^2dz\\ 
&- \eps \<\nabla^2K(-\delta q)G_1(t),y_2\>+\eps^{3/2}\int r_{21}^\eps
(t,z-y_2)|u_1(t,z)|^2dz, 
\end{align*}
\begin{align*}
  \eps^{-d/2}\int K(z)& \left\lvert u_2\(t, y_2 
    -\frac{z}{\sqrt\eps}\)\right\rvert^2dz \\
&= K(0)\|a_2\|_{L^2}^2 + \sqrt \eps \|a_2\|_{L^2}^2 y_2\cdot \nabla
K(0) -\sqrt\eps \nabla K(0)\cdot G_2(t) \\
&+\frac{\eps}{2}
\<y_2,\nabla^2K(0)y_2\> \|a_2\|_{L^2}^2+ \frac{\eps}{2} \int
\<z,\nabla^2K(0)z\>|u_2(t,z)|^2dz\\ 
&- \eps \<\nabla^2K(0)G_2(t),y_2\>+\eps^{3/2}\int r_{22}^\eps
(t,z-y_2)|u_2(t,z)|^2dz,
\end{align*}
where the functions $r_{jk}^\eps$ satisfy uniform estimates of the
form
\begin{equation}\label{eq:estrij}
  |r_{jk}^\eps(t,z)|\le C(T)  \<z\>^3 ,\quad \forall z\in
  \R^d, t\in [0,T],
\end{equation}
with $C(T)$ independent of $\eps$, $j$ and $k$, but possibly depending
on $T$.

\subsection{The critical case: $\alpha=1$}
\label{sec:alpha1}
When $\alpha>1$, we have $b_{\ell j}=b_{\ell j}^{\rm lin}$ for all
$\ell, j$: nonlinear effects are not present at leading order. 
When $\alpha=1$, we still have $b_{\ell j}=b_{\ell j}^{\rm lin}$ for
$\ell=0,1$: we still demand $(q_j,p_j)$ to solve \eqref{eq:traj} in
order for the equations $b_{\ell j}=0$, $\ell=0,1$, to be satisfied, and
$S_j$ is defined as in \eqref{eq:classicalaction}. On
the other hand, the expression for $b_{2j}$ is altered:
\begin{align*}
  b_{21} &= i\d_t u_1 +\frac{1}{2}\Delta u_1 -
\frac{1}{2}\<y_1,\nabla^2 V\(t,q_1(t)\)y_1\>u_1-K(0)\|a_1\|_{L^2}^2
u_1\\
& -K\(\delta q(t)\) \|a_2\|_{L^2}^2 u_1,\\
b_{22}& = i\d_t u_2 +\frac{1}{2}\Delta u_2 -
\frac{1}{2}\<y_2,\nabla^2 V\(t,q_2(t)\)y_2\>u_2-K(0)\|a_2\|_{L^2}^2
u_2\\
& -K\(-\delta q(t)\) \|a_1\|_{L^2}^2 u_2.
\end{align*}
The last term in each expression accounts for a coupling, revealing a
leading order interaction of the two wave packets. This coupling can
be understood rather explicitly, since it consists of a purely time
dependent potential. Solving the equations $b_{2j}=0$, we
infer, with obvious notations adapted from  \eqref{eq:linearu}, 
\begin{align*}
  u_1(t,y_1)&=u^{\rm lin}_1(t,y_1)\exp\(-i t
    K(0)\|a_1\|_{L^2}^2-i\|a_2\|_{L^2}^2 \int_0^t K\(\delta
    q(s)\)ds\),\\
u_2(t,y_2)&=u^{\rm lin}_2(t,y_2)\exp\(-i t
    K(0)\|a_2\|_{L^2}^2-i\|a_1\|_{L^2}^2 \int_0^t K\(-\delta
    q(s)\)ds\).
\end{align*}
The presence of these phase shifts accounts for 
nonlinear effects at leading order in the approximate wave packet
$\psi_{\rm app}^\eps$: nonlinear effects in the case of a single wave
packet, and nonlinear coupling, since we
assume $\|a_j\|_{L^2}\not =0$. For the remainder terms, we have  the (rough)
pointwise estimate 
\begin{equation}\label{eq:reste}
  |r^\eps_{j}(t,y_j)|\le C(T)\sqrt\eps \<y_j\>^3|u_j(t,y_j)|\(1+
  \sum_{k =1,2}\|u_k(t)\|_{\Sigma^2}^2\), \quad t\in [0,T].
\end{equation}
The remainder $r_j^\eps$ is the sum of the terms
  $r_{jV}^\eps$ and  $\eps^{\alpha+3/2}(r_{jk}^\eps\ast |u_k|^2)u_j$, $k=1,2$,
  so this estimate is an easy 
consequence of \eqref{eq:estrjV} and \eqref{eq:estrij}.
To be precise, this estimate is valid up to the rectangle terms that we have
discarded so far, when we have developed $(K\ast
  |\psi_{\rm app}^\eps|^2)\psi_{\rm app}^\eps$. We will 
see in Section~\ref{sec:mixed} that they satisfy a similar estimate (see
Corollary~\ref{cor:reste}).

\subsection{The case $\alpha=1/2$} 
\label{sec:alpha12}
We still have $b_{0j}=b_{0j}^{\rm lin}$, but now
\begin{align*}
  b_{11}&= -i\( \dot q_1(t)-p_1(t)\)\cdot \nabla u_1-y_1\cdot \(\dot p_1(t)+
\nabla V\(t,q_1(t)\)\) u_1 \\
&\quad -K(0)\|a_1\|_{L^2}^2 u_1-K\(\delta
q\)\|a_2\|_{L^2}^2 u_1, \\
b_{12}& = -i\( \dot q_2(t)-p_2(t)\)\cdot \nabla u_2-y_2\cdot \(\dot p_2(t)+
\nabla V\(t,q_2(t)\)\) u_2 \\
&\quad -K(0)\|a_2\|_{L^2}^2 u_2-K\(-\delta
q\)\|a_1\|_{L^2}^2 u_2, \\
b_{21} & = i\d_t u_1 +\frac{1}{2}\Delta u_1 -
\frac{1}{2}\<y_1,\nabla^2 V\(t,q_1(t)\)y_1\>u_1- \|a_1\|_{L^2}^2
y_1\cdot \nabla K(0) u_1\\
&\quad - \|a_2\|_{L^2}^2
y_1\cdot \nabla K(\delta q) u_1
+ \nabla K(0)\cdot G_1(t) u_1 + 
\nabla K(\delta q)\cdot G_2(t) u_1,\\
b_{22} & = i\d_t u_2 +\frac{1}{2}\Delta u_2 -
\frac{1}{2}\<y_2,\nabla^2 V\(t,q_2(t)\)y_2\>u_2- \|a_2\|_{L^2}^2
y_2\cdot \nabla K(0) u_2\\
&\quad - \|a_1\|_{L^2}^2
y_2\cdot \nabla K(-\delta q) u_2
+ \nabla K(0)\cdot G_2(t) u_2 + 
\nabla K(-\delta q)\cdot G_1(t) u_2 . 
\end{align*}
The first line in $b_{1j}$ is zero if $(q_j,p_j)$ is the classical
trajectory given by \eqref{eq:traj}. On the other hand, it does not seem
to be possible to cancel out the second line in $b_{1j}$, even by
modifying \eqref{eq:traj}: we have three sets of terms, involving
$\nabla u_1$, $y_1 u_1$ and $u_1$, respectively, so they must be
treated separately.
As in \cite{CaCa11}, we then modify the general strategy, and allow
$b_{0j}$ to depend on $\eps$. We alter the hierarchy as follows:
\begin{align*}
  b_{01}^\eps&= -u_1\Big( \dot S_1(t)-p_1(t)\cdot \dot q_1(t)
  +\frac{|p_1(t)|^2}{2}+V\(t,q_1(t)\)\\
&\phantom{= -u_1\Big(}+\sqrt\eps
  K(0)\|a_1\|_{L^2}^2+\sqrt\eps K\(\delta
q\)\|a_2\|_{L^2}^2 \Big),\\
 b_{02}^\eps&= -u_2\Big( \dot S_2(t)-p_2(t)\cdot \dot q_2(t)
  +\frac{|p_2(t)|^2}{2}+V\(t,q_2(t)\)\\
&\phantom{= -u_1\Big(}+\sqrt\eps
  K(0)\|a_2\|_{L^2}^2+\sqrt\eps K\(-\delta
q\)\|a_1\|_{L^2}^2 \Big),\\
  b_{11}&= -i\( \dot q_1(t)-p_1(t)\)\cdot \nabla u_1-y_1\cdot \(\dot p_1(t)+
\nabla V\(t,q_1(t)\)\) u_1 , \\
b_{12}& = -i\( \dot q_2(t)-p_2(t)\)\cdot \nabla u_2-y_2\cdot \(\dot p_2(t)+
\nabla V\(t,q_2(t)\)\) u_2 , 
\end{align*}
and we leave $b_{2j}$ unchanged. Like before, $b_{1j}=0$ provided that
$(q_j,p_j)$ solves \eqref{eq:traj}. The novelty is that we now
consider  modified, $\eps$-dependent, actions:
\begin{equation*}
\left\{
  \begin{aligned}
S^\eps_1(t) &= \int_{0}^{t}\(\frac{1}{2}
    |p_1(s)|^2-V(s,q_1(s))\)ds\\
&\phantom{=}-t\sqrt{\eps} K(0)\|a_1\|_{L^2(\R^d)}^2
    -\sqrt\eps \|a_2\|_{L^2(\R^d)}^2\int_0^t K\(\delta q(s)\)ds ,\\
S^\eps_2(t) &= \int_{0}^{t}\(\frac{1}{2}
    |p_2(s)|^2-V(s,q_2(s))\)ds\\
&\phantom{=}-t\sqrt{\eps} K(0)\|a_2\|_{L^2(\R^d)}^2
    -\sqrt\eps \|a_1\|_{L^2(\R^d)}^2\int_0^t K\(-\delta q(s)\)ds.
\end{aligned}
\right.
\end{equation*}
These expressions are exactly those given in the introduction
\eqref{eq:actionhalf}. 
The equations $b_{2j}=0$ are envelope
equations, which are nonlinear since $G_k$ is a nonlinear function of
$u_k$. Note however that $G_k$ yields a purely time-dependent
potential. Consider the solution to 
\begin{align*}
i\d_t \tilde u_1 +\frac{1}{2}\Delta \tilde u_1 & =
\frac{1}{2}\<y_1,\nabla^2 V\(t,q_1(t)\)y_1\>\tilde u_1+\|a_1\|_{L^2}^2
y_1\cdot \nabla K(0) \tilde u_1\\
&\phantom{=}+ \|a_2\|_{L^2}^2
y_1\cdot \nabla K(\delta q) \tilde u_1,\\
i\d_t \tilde u_2 +\frac{1}{2}\Delta \tilde u_2 &=
\frac{1}{2}\<y_2,\nabla^2 V\(t,q_2(t)\)y_2\>\tilde u_2+ \|a_2\|_{L^2}^2
y_2\cdot \nabla K(0) \tilde u_2\\
&\phantom{=} + \|a_1\|_{L^2}^2
y_2\cdot \nabla K(-\delta q) \tilde u_2.
\end{align*}
Set
\begin{align*}
&  u_1(t,y_1) = \tilde u_1(t,y_1)\exp\(i\int_0^t\(\nabla K(0)\cdot
  \tilde G_1(s) + \nabla K\(\delta q(s)\)\cdot \tilde G_2(s)\)ds \),\\
& u_2(t,y_2) = \tilde u_2(t,y_2)\exp\(i\int_0^t\(\nabla K(0)\cdot
  \tilde G_2(s) + \nabla K\(-\delta q(s)\)\cdot \tilde G_1(s)\)ds \),
\end{align*}
where 
$$ \tilde G_j(t) = \int_{\R^d}z|\tilde u_j(t,z)|^2dz.$$
It is clear that $|u_j|=|\tilde u_j|$, hence $\tilde G_j=G_j$,
and so $u_1$ and $u_2$ are such that $b_{21}=b_{22}=0$, and correspond
to the envelopes introduced in \S\ref{sec:super12}. 
 Finally, we still have a
remainder term satisfying \eqref{eq:reste} (up to the terms treated in
\S\ref{sec:mixed}). 

\subsection{The case $\alpha=0$} Now all the coefficients
$b_{\ell j}$ are affected by the nonlinearity:
\begin{align*}
  b_{01}&= -u_1\Big( \dot S_1(t)-p_1(t)\cdot \dot q_1(t)
  +\frac{|p_1(t)|^2}{2}+V\(t,q_1(t)\)+K(0)\|a_1\|_{L^2}^2\\
&\phantom{=-u_1\Big(}+K\(\delta
  q\)\|a_2\|_{L^2}^2 \Big), \\
  b_{02}&= -u_2\Big( \dot S_2(t)-p_2(t)\cdot \dot q_2(t)
  +\frac{|p_2(t)|^2}{2}+V\(t,q_2(t)\)+K(0)\|a_2\|_{L^2}^2\\
&\phantom{=-u_1\Big(}+K\(-\delta
  q\)\|a_1\|_{L^2}^2 \Big), \\
b_{11}&=-i\( \dot q_1(t)-p_1(t)\)\cdot \nabla u_1-y_1\cdot \(\dot p_1(t)+
\nabla V\(t,q_1(t)\)\) u_1 -
   \|a_1\|_{L^2}^2 y_1\cdot \nabla K(0)u_1\\
&\quad - \|a_2\|_{L^2}^2 y_1\cdot \nabla K\(\delta q\)u_1
+ \nabla K(0)\cdot G_1(t) u_1 +\nabla K\(\delta q\)\cdot G_2(t)u_1,\\
b_{12}&=-i\( \dot q_2(t)-p_2(t)\)\cdot \nabla u_2-y_2\cdot \(\dot p_2(t)+
\nabla V\(t,q_2(t)\)\) u_2 -
   \|a_2\|_{L^2}^2 y_2\cdot \nabla K(0)u_2\\
&\quad - \|a_1\|_{L^2}^2 y_2\cdot \nabla K\(-\delta q\)u_2
+ \nabla K(0)\cdot G_2(t) u_2 +\nabla K\(-\delta q\)\cdot G_1(t)u_2,\\
b_{21}&=  i\d_t u_1 +\frac{1}{2}\Delta u_1- \frac{1}{2}\<y_1, M_1(t)y_1\>u_1 +
\<\nabla^2K(0) G_1(t),y_1\>u_1  \\
&\quad +
\<\nabla^2K(\delta q) G_2(t),y_1\>u_1 -\frac{1}{2}\( \int
\<z,\nabla^2K(0)z\>|u_1(t,z)|^2dz\) u_1\\
&\quad-\frac{1}{2} \(\int
\<z,\nabla^2K(\delta q)z\>|u_2(t,z)|^2dz\) u_1, \\
b_{22}&=  i\d_t u_2 +\frac{1}{2}\Delta u_2- \frac{1}{2}\<y_2, M_2(t)y_2\>u_2 +
\<\nabla^2K(0) G_2(t),y_2\>u_2  \\
&\quad +
\<\nabla^2K(-\delta q) G_1(t),y_2\>u_2 -\frac{1}{2}\( \int
\<z,\nabla^2K(0)z\>|u_2(t,z)|^2dz\) u_2\\
&\quad-\frac{1}{2}\( \int
\<z,\nabla^2K(-\delta q)z\>|u_1(t,z)|^2dz\) u_2, 
\end{align*}
where we have denoted 
\begin{align*}
M_1(t)&=\|a_1\|_{L^{2}(\R^{d})}^{2}\nabla^2
K(0)+\|a_2\|_{L^{2}(\R^{d})}^{2}\nabla^2 K\(\delta q(t)\)+
\nabla^2_xV\(t,q_1(t)\),\\
M_2(t)&=\|a_2\|_{L^{2}(\R^{d})}^{2}\nabla^2
K(0)+\|a_1\|_{L^{2}(\R^{d})}^{2}\nabla^2 K\(-\delta q(t)\)+
\nabla^2_xV\(t,q_2(t)\).
\end{align*}
Similar to the case $\alpha=1/2$, we incorporate the last term of
$b_{1j}$ into $b_{0j}$, that is we modify the action as follows: 
\begin{align*}
  S_1^\eps(t) &= \int_0^t \Big(\frac{1}{2}|p_1(s)|^2 - V\(s,q_1(s)\)
 - K(0)\|a_1\|_{L^2}^2- K\(\delta q(s)\)\|a_2\|_{L^2}^2 \\
&\phantom{= \int_0^t \Big(}+\sqrt \eps
  \nabla K(0)\cdot G_1(s)+\sqrt\eps \nabla K\(\delta q(s)\)\cdot
  G_2(s)\Big)ds,\\
     S_2^\eps(t) &= \int_0^t \Big(\frac{1}{2}|p_2(s)|^2 - V\(s,q_2(s)\)
 - K(0)\|a_2\|_{L^2}^2- K\(-\delta q(s)\)\|a_1\|_{L^2}^2 \\
&\phantom{= \int_0^t \Big(}+\sqrt \eps
  \nabla K(0)\cdot G_2(s)+\sqrt\eps \nabla K\(-\delta q(s)\)\cdot
  G_1(s)\Big)ds.
\end{align*}
Note that for $S_j^\eps$ to be well defined, we have to first define
$u_j$, for which we solve the envelope equations, given by $b_{21}=b_{22}=0$.
Canceling the terms $b_{1j}$ yields the modified system of trajectories:
\begin{equation*}
  \left\{
\begin{aligned}
 & \dot q_1(t) = p_1(t),\\
& \dot p_1(t) = -\nabla V\(t,q_1(t)\) -
  \|a_1\|_{L^2}^2 \nabla K(0)- \|a_2\|_{L^2}^2 \nabla
  K\(q_1(t)-q_2(t)\),\\
&\dot q_2(t) = p_2(t),\\
& \dot p_2(t) = -\nabla V\(t,q_2(t)\) -
  \|a_2\|_{L^2}^2 \nabla K(0)- \|a_1\|_{L^2}^2 \nabla
  K\(q_2(t)-q_1(t)\),
\end{aligned}
\right.
\end{equation*}
which is exactly \eqref{eq:trajmodif}.
The remainder term still satisfies \eqref{eq:reste} (up to the terms
treated in  \S\ref{sec:mixed}).
We will examine more carefully the envelope system in \S\ref{sec:refined}.

\section{Analysis of the rectangle interaction term}
\label{sec:mixed}

In the previous section, we have left out the rectangle terms,
claiming that they are negligible in the limit $\eps\to 0$. In this
section, we justify precisely this statement. Since the two terms that
we have discarded are similar, we shall simply consider the first
one:
\begin{equation*}
 2\eps^{-d/2}\RE \int K(z)
  e^{i(\phi_1-\phi_2)(t,x-z)/\eps}u_1\(t,y_1-\frac{z}{\sqrt\eps}\)\overline
  u_2\(t,y_1 +\frac{\delta q(t)}{\sqrt\eps}-\frac{z}{\sqrt\eps}\)dz.
\end{equation*}
Notice that we have not yet expressed the phases $\phi_k$ in terms of
the variable $y_1$, and that the expression of $\phi_k$ varies
according to $\alpha=1$, $\alpha=1/2$, or $\alpha=0$. We shall retain
only a common feature though, that is,
$\phi_k^\eps(t,x)=\Theta_k^\eps(t) + x\cdot p_k(t)$, where only the
purely time dependent function $\Theta$ may depend on $\eps$ (when
$\alpha\in \{1/2,0\}$), and the spatial oscillations are singled
out. Since $x= q_1(t)+\sqrt\eps y_1$, we get, once the real part and
the time oscillations are omitted:
\begin{equation*}
 \eps^{-d/2} \int K(z)
  e^{i\(\sqrt \eps y_1-z\)\cdot \delta
    p(t)/\eps}u_1\(t,y_1-\frac{z}{\sqrt\eps}\)\overline 
  u_2\(t,y_1 +\frac{\delta q(t)}{\sqrt\eps}-\frac{z}{\sqrt\eps}\)dz.
\end{equation*}
Changing the integration variable, and introducing more general
notations, we examine:
\begin{equation}\label{eq:Ieps}
I^\eps(t,y_1)= \int {\mathcal K}\(\sqrt\eps (y_1-z)\)
  e^{iz\cdot \delta
    p(t)/\sqrt\eps}u_1\(t,z\)\overline 
  u_2\(t,z +\frac{\delta q(t)}{\sqrt\eps}\)dz.
\end{equation}
The main result of this section is stated as follows.
\begin{proposition}\label{prop:microloc}
  Let $T>0$. Suppose that $\mathcal K \in W^{\ell,\infty}$, $u_j\in
  C([0,T];\Sigma^k)$ with $k,\ell\in \N$, 
  and consider $I^\eps$ defined by \eqref{eq:Ieps}. There exists 
  $C>0$ independent of $\eps\in (0,1]$, $\mathcal K$, $u_1$ and $u_2$
  such that  
  \begin{equation*}
    \sup_{t\in [0,T]}\|I^\eps(t,\cdot)\|_{L^\infty(\R^d)}\le C
    \eps^{\min(\ell,k)/2}\|\mathcal
    K\|_{W^{\ell,\infty}}\|u_1\|_{L^\infty([0,T];\Sigma^k)} 
\|u_2\|_{L^\infty([0,T];\Sigma^k)}.  
  \end{equation*}
\end{proposition}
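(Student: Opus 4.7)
Since $(q_{10},p_{10})\neq(q_{20},p_{20})$, a uniqueness argument applied to the Lipschitz Hamiltonian system governing $(q_1,p_1,q_2,p_2)$ -- \eqref{eq:traj} in the critical and half-supercritical regimes, \eqref{eq:trajmodif} in the $\alpha=0$ regime (in the latter case one checks that the diagonal $\{q_1=q_2,p_1=p_2\}$ is invariant, so trajectories that start apart stay apart) -- yields $(\delta q(t),\delta p(t))\neq(0,0)$ for every $t\in[0,T]$. By continuity there is $c_T>0$ such that $\max(|\delta q(t)|,|\delta p(t)|)\ge c_T$ on $[0,T]$, so at each $t$ either (i) $|\delta p(t)|\ge c_T/2$, in which case I would exploit the rapid oscillation $e^{iz\cdot\delta p/\sqrt\eps}$ by integration by parts, or (ii) $|\delta q(t)|\ge c_T/2$, in which case I would exploit the spatial separation of $u_1(z)$ and $u_2(z+\delta q/\sqrt\eps)$, whose masses are concentrated a distance $|\delta q|/\sqrt\eps\to\infty$ apart. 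The proposition then follows by taking the sup in $t$.

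For regime (i), introduce the first-order operator $L:=-i\sqrt\eps\,|\delta p|^{-2}\,\delta p\cdot\nabla_z$, which satisfies $L\,e^{iz\cdot\delta p/\sqrt\eps}=e^{iz\cdot\delta p/\sqrt\eps}$, and integrate by parts $m:=\min(\ell,k)$ times in \eqref{eq:Ieps}. Since $L$ has $z$-independent coefficients, this rewrites
\[
I^\eps(t,y_1)=(-1)^m\int e^{iz\cdot\delta p/\sqrt\eps}\,L^m\!\left[\mathcal K\bigl(\sqrt\eps(y_1-z)\bigr)\,u_1(z)\,\overline{u_2}\bigl(z+\tfrac{\delta q}{\sqrt\eps}\bigr)\right]dz,
\]
the boundary terms vanishing since $\partial^j u_j\in L^2$ for $j\le m\le k$. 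A direct computation gives $|L^m f|\le \eps^{m/2}|\delta p|^{-m}|(\delta p\cdot\nabla)^m f|/|\delta p|^m\cdot|\delta p|^m=\eps^{m/2}|\delta p|^{-m}|D^m f|$. Expanding $(\delta p\cdot\nabla_z)^m$ by the Leibniz rule produces a finite sum of terms with $j_1$ derivatives on $\mathcal K$, $j_2$ on $u_1$, $j_3$ on $u_2$, $j_1+j_2+j_3=m$; the chain rule contributes an extra $(\sqrt\eps)^{j_1}$ from $\mathcal K(\sqrt\eps\,\cdot)$, while the constraints $j_1\le\ell$ and $j_2,j_3\le k$ are automatic from $m\le\ell$ and $m\le k$. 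Bounding $\partial^{j_1}\mathcal K$ in $L^\infty$ and applying Cauchy--Schwarz in $z$ with $\|\partial^{j_2}u_1\|_{L^2}\le\|u_1\|_{\Sigma^k}$ and, by translation invariance, $\|\partial^{j_3}u_2(\cdot+\delta q/\sqrt\eps)\|_{L^2}\le\|u_2\|_{\Sigma^k}$, each term is dominated by $\eps^{m/2}|\delta p|^{-m}\|\mathcal K\|_{W^{\ell,\infty}}\|u_1\|_{\Sigma^k}\|u_2\|_{\Sigma^k}$; the lower bound on $|\delta p|$ absorbs the denominator.

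For regime (ii), bound the phase by $1$ and $\mathcal K$ by $\|\mathcal K\|_{L^\infty}$, and reduce to the elementary inequality
\[
\int_{\R^d}|u_1(z)|\,|u_2(z+a)|\,dz \lesssim |a|^{-k}\,\|u_1\|_{\Sigma^k}\,\|u_2\|_{\Sigma^k}, \qquad a\neq 0,
\]
proved by splitting at $|z|=|a|/2$: on $\{|z|\ge|a|/2\}$ use $|u_1(z)|\le(2/|a|)^k|z|^k|u_1(z)|$ and Cauchy--Schwarz; on the complement $|z+a|\ge|a|/2$ transfers the decay onto $u_2$. With $a:=\delta q(t)/\sqrt\eps$ and $|\delta q|\ge c_T/2$ this yields $|I^\eps(t,y_1)|\lesssim\eps^{k/2}\le\eps^{\min(\ell,k)/2}$ for $\eps\in(0,1]$.

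Combining the two regimes gives the uniform bound stated in the proposition. The main technical point is the bookkeeping in the integration-by-parts step: the value $m=\min(\ell,k)$ is the largest number of integrations by parts for which the available regularity of both $\mathcal K$ (in $W^{\ell,\infty}$) and of the envelopes $u_j$ (in $\Sigma^k\hookrightarrow H^k$) simultaneously accommodates the derivatives produced by the Leibniz expansion, and this threshold is precisely what fixes the exponent appearing in the estimate.
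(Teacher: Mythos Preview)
Your proof is correct and follows essentially the same strategy as the paper: establish the phase-space separation lower bound $\max(|\delta q(t)|,|\delta p(t)|)\ge\eta$ (the paper's Lemma~\ref{lem:microloc}), then treat the two regimes by a nonstationary-phase integration by parts when $|\delta p|$ is bounded below, and by exploiting spatial separation when $|\delta q|$ is bounded below. The only cosmetic difference is in the spatial-separation case: where you split the integral at $|z|=|a|/2$ and place the weight $|z|^k$ or $|z+a|^k$ on the appropriate factor, the paper inserts $\langle z\rangle^{\pm k}\langle z+a\rangle^{\pm k}$ and invokes Peetre's inequality $\sup_z\langle z\rangle^{-k}\langle z+a\rangle^{-k}\le C_k\langle a\rangle^{-k}$; the two devices are equivalent.
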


In view of the computations performed in Section~\ref{sec:rough},
this result has the following consequence.
\begin{corollary}\label{cor:reste}
  Consider $\apsi$ given by \eqref{eq:approx}, derived in
  Section~\ref{sec:rough}, whose exact expression varies according to
  the cases
  $\alpha=1$, $\alpha=1/2$ or $\alpha=0$. Let $T>0$, and suppose
  $u_j\in C([0,T];\Sigma^3)$. 
Then  $\apsi\in C([0,T];\Sigma^3)$ satisfies $\psi^\eps_{{\rm app}\mid t=0}=
\psi^\eps_{\mid t=0}$ and 
  \begin{equation*}
       i\eps\partial_{t} \apsi+\frac{\eps^{2}}{2} \Delta
\apsi=V\(t,  x\)
\apsi+\sqrt\eps \(K*|\apsi|^{2}
\)\apsi+\eps r^\eps,
  \end{equation*}
where $r^\eps \in C(\R_+;L^2(\R^d))$ is such that there
exists $C$ independent of $\eps$ with
\begin{equation*}
  \sup_{t\in [0,T]}\|r^\eps(t)\|_{L^2(\R^d)}\le C\sqrt\eps. 
\end{equation*}
\end{corollary}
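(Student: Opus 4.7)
The plan is to combine the algebraic cancellations from Section~\ref{sec:rough} with the pointwise remainder estimate \eqref{eq:reste} and the rectangle-term bound of Proposition~\ref{prop:microloc}. The matching of initial data $\apsi(0,\cdot)=\psi^\eps(0,\cdot)$ is immediate, since $(q_j,p_j)(0)=(q_{j0},p_{j0})$, $S_j^\eps(0)=0$, and $u_j(0,\cdot)=a_j$ in all three regimes.

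Substituting $\apsi$ into the Hartree operator yields the expansion $\eps^{-d/4}\sum_{j=1,2} e^{i\phi_j/\eps}(b_{0j}+\sqrt\eps\, b_{1j}+\eps\, b_{2j}+\eps\, r_j^\eps)$ from \eqref{eq:generalstrategy}. The whole purpose of the derivation in Section~\ref{sec:rough} was to choose $(q_j,p_j)$, $S_j^\eps$ and $u_j$ so that $b_{0j}=b_{1j}=b_{2j}=0$ in each of the three cases $\alpha\in\{1,1/2,0\}$; granting this, only $\eps\, r_j^\eps$ remains. The remainder $r_j^\eps$ itself decomposes into: (i) the Taylor-type contributions $r_{jV}^\eps$ from expanding $V(t,q_j+\sqrt\eps\, y_j)$ to order two and $r_{jk}^\eps$ from expanding $K(\delta q+\sqrt\eps(y_j-z))$ and $K(\sqrt\eps(y_j-z))$ to order two, both controlled by \eqref{eq:estrjV} and \eqref{eq:estrij}; and (ii) the rectangle cross-terms in $K\ast|\apsi|^2$ that were set aside in Section~\ref{sec:rough}.

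For (i), the pointwise estimate \eqref{eq:reste} gives $|r_j^\eps(t,y_j)|\lesssim \sqrt\eps\,\<y_j\>^3|u_j(t,y_j)|\bigl(1+\sum_k\|u_k(t)\|_{\Sigma^2}^2\bigr)$. Since $u_j\in C([0,T];\Sigma^3)$ by hypothesis, the weighted norm $\|\<y_j\>^3 u_j(t)\|_{L^2}$ is bounded uniformly on $[0,T]$, and the isometric rescaling $y_j=(x-q_j(t))/\sqrt\eps$, together with the prefactor $\eps^{-d/4}$, converts the $L^2_{y_j}$ bound into an $L^2_x$ bound of size $O(\sqrt\eps)$. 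For (ii), I would apply Proposition~\ref{prop:microloc} with $\mathcal K=K$ and, say, $\ell=k=2$ --- both regularity levels being available from Assumption~\ref{hyp:gen} and the hypothesis $u_j\in C([0,T];\Sigma^3)$ --- obtaining $\|I^\eps(t,\cdot)\|_{L^\infty}\lesssim\eps$. Multiplying this uniform bound against $u_j(t)\in L^2$ yields a contribution to $r^\eps$ of size $O(\eps)$ in $L^2$, which is even better than needed. Summing (i) and (ii) and absorbing them into $\eps\, r^\eps$ gives $\|r^\eps(t)\|_{L^2}\lesssim\sqrt\eps$ uniformly on $[0,T]$.

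The key step is (ii): the naive $L^\infty$ bound on the rectangle term is $O(\|K\|_{L^\infty})$, with no smallness in $\eps$, which would be fatal. Proposition~\ref{prop:microloc} extracts decay from either the oscillatory factor $e^{iz\cdot\delta p(t)/\sqrt\eps}$ (non-stationary phase in $z$) or the large translation $\delta q(t)/\sqrt\eps$ between the centers of the two wave packets, and this microlocal smallness is what makes the whole construction work. Everything else is bookkeeping of Taylor remainders already assembled in Section~\ref{sec:rough}, and the argument is essentially regime-independent once the algebraic cancellations $b_{\ell j}=0$ are granted.
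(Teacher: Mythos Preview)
Your approach matches the paper's: combine the Taylor-remainder bound \eqref{eq:reste} for the squared terms with Proposition~\ref{prop:microloc} for the rectangle terms. Part (i) is handled correctly.

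There is, however, a bookkeeping slip in part (ii). The rectangle term sits inside the nonlinear potential $V_j^{\rm NL}$, and the full nonlinear term carries the prefactor $\eps^\alpha$. When you absorb this contribution into the residual $\eps r_j^\eps$ on the right-hand side of \eqref{eq:generalstrategy}, you must divide by $\eps$, so what actually lands in $r^\eps$ is $\eps^{\alpha-1} I^\eps\, u_j$, not $I^\eps\, u_j$. With your choice $\ell=k=2$, Proposition~\ref{prop:microloc} yields $\|I^\eps\|_{L^\infty}\lesssim\eps$, hence a contribution of order $\eps^{\alpha}$ to $r^\eps$ in $L^2$. This is acceptable for $\alpha=1$ and $\alpha=1/2$, but for $\alpha=0$ it gives only $O(1)$, not $O(\sqrt\eps)$. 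The fix is simply to take $\ell=k=3$: the hypotheses $K\in W^{3,\infty}$ (Assumption~\ref{hyp:gen}) and $u_j\in C([0,T];\Sigma^3)$ are stated precisely to make this available, and then Proposition~\ref{prop:microloc} gives $\|I^\eps\|_{L^\infty}\lesssim\eps^{3/2}$, so the rectangle contribution to $r^\eps$ is $O(\eps^{\alpha+1/2})=O(\sqrt\eps)$ in the worst case $\alpha=0$. This is exactly why the corollary demands $\Sigma^3$ regularity rather than $\Sigma^2$.
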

\begin{remark}
  At this stage, the property $u_j\in
  C([0,T];\Sigma^3)$ is established in the cases $\alpha=1$ and
  $\alpha=1/2$. It will require some work to prove it in the case
  $\alpha=0$; see Section~\ref{sec:refined}.  The assumptions of
  Corollary~\ref{cor:reste} are fulfilled, modulo the proof of
  Proposition~\ref{prop:existenv0}. 
\end{remark}
\begin{remark}
 Proposition~\ref{prop:microloc} is a refinement of
 \cite[Proposition~6.3]{CaFe11}, in the sense that the power of $\eps$
 on the right hand side is as large as we wish, provided that 
  $\mathcal K$ is sufficiently smooth, and that the functions
 $u_1$ and $u_2$ are sufficiently localized in space and
 frequency. Identifying precisely the norms of $\mathcal K$,
 $u_1$ and $u_2$, involved in order to get such an error estimate,
 will turn out to be crucial  to
 prove Theorem~\ref{theo:alpha0}, at the level of the bootstrap argument 
 presented in Section~\ref{sec:bootstrap}.
\end{remark}

\subsection{A microlocal property}
\label{sec:microlocal}

The proof of Proposition~\ref{prop:microloc} is based on the following
remark: the function that we integrate is localized away from the
origin in \emph{phase space}:

\begin{lemma}\label{lem:microloc}
 Suppose $(q_{10},p_{10})\not =(q_{20},p_{20})$.   In either of the
 cases $\alpha=1$, $\alpha=1/2$ or $\alpha=0$, the 
  following holds. For any $T>0$, there exists $\eta>0$ such that for
  all $t\in [0,T]$,
  \begin{equation*}
    |\delta q(t)|\ge \eta,\quad \text{or }\lvert \delta p(t)\rvert\ge \eta.
  \end{equation*}
\end{lemma}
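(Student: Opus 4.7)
The plan is to establish the stronger statement that $(\delta q(t),\delta p(t))\neq (0,0)$ for every $t\ge 0$; the lemma then follows immediately, since the map $t\mapsto |\delta q(t)|^2+|\delta p(t)|^2$ is continuous and strictly positive on the compact interval $[0,T]$, so $\max(|\delta q(t)|,|\delta p(t)|)$ is bounded below by some $\eta>0$ uniformly in $t\in [0,T]$. The nonvanishing claim itself will be proved by contradiction combined with ODE uniqueness; the argument splits into two cases according to which Hamiltonian system governs $(q_j,p_j)$.

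In the cases $\alpha=1$ and $\alpha=1/2$, both $(q_1,p_1)$ and $(q_2,p_2)$ satisfy the same Hamiltonian system \eqref{eq:traj}, whose right-hand side is globally Lipschitz in $(q,p)$ under Assumption~\ref{hyp:gen} (the $L^\infty$ bound on $\nabla^2 V$ makes $\nabla V$ globally Lipschitz in $x$, uniformly in $t$). If $(\delta q(t_0),\delta p(t_0))=(0,0)$ at some $t_0\ge 0$, uniqueness forces $(q_1,p_1)\equiv (q_2,p_2)$ on $\R_+$, contradicting the standing hypothesis $(q_{10},p_{10})\neq (q_{20},p_{20})$.

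In the case $\alpha=0$ the system \eqref{eq:trajmodif} couples the four trajectories nontrivially, and the argument requires one to exploit a symmetry: the diagonal $\{q_1=q_2,\ p_1=p_2\}$ is invariant under the flow. Indeed, setting $q_1=q_2=:q$ and $p_1=p_2=:p$ one has $\nabla K(q_1-q_2)=\nabla K(q_2-q_1)=\nabla K(0)$, so the four equations of \eqref{eq:trajmodif} collapse into the single planar system
\[
\dot q=p,\qquad \dot p=-\nabla V(t,q)-\bigl(\|a_1\|_{L^2}^2+\|a_2\|_{L^2}^2\bigr)\nabla K(0),
\]
any solution of which lifts to a diagonal solution of the full $4d$ system. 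Now, if $(\delta q(t_0),\delta p(t_0))=0$, then the trajectory $(q_1,p_1,q_2,p_2)$ and the diagonal lift of the reduced planar solution issued from $(q_1(t_0),p_1(t_0))$ at time $t_0$ are two solutions of \eqref{eq:trajmodif} agreeing at $t_0$. The uniqueness part of Lemma~\ref{lem:cauchylip} then forces $(q_1,p_1)\equiv (q_2,p_2)$ on $\R_+$, and evaluation at $t=0$ again yields a contradiction.

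The only step demanding genuine insight is recognizing the diagonal symmetry of \eqref{eq:trajmodif} in the case $\alpha=0$; once it is identified, everything reduces to standard ODE uniqueness together with the compactness of $[0,T]$, and I foresee no further analytic obstacle.
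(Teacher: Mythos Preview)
Your proof is correct. For $\alpha=1$ and $\alpha=1/2$ it coincides with the paper's argument. For $\alpha=0$ you take a genuinely different route: the paper subtracts the two equations in \eqref{eq:trajmodif} to obtain a system for $(\delta q,\delta p)$, observes that under Assumption~\ref{hyp:gen} the right-hand side is bounded by $C(|\delta q|+|\delta p|)$, and applies Gronwall's Lemma to conclude that $(\delta q,\delta p)$ must vanish identically once it vanishes at one instant. You instead identify the diagonal $\{q_1=q_2,\ p_1=p_2\}$ as an invariant manifold of the full $4d$ flow and invoke uniqueness for \eqref{eq:trajmodif} directly. Both arguments are short and rest on the same Lipschitz bounds; the paper's Gronwall estimate is essentially a hands-on proof of the uniqueness you quote, while your symmetry observation makes the invariance of the diagonal explicit and avoids writing out the difference system.
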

\begin{proof}
  We argue by contradiction: if the result were not true, we could
  find a sequence $t_n\in [0,T]$ so that 
  \begin{equation*}
    |\delta q(t_n)| +\lvert \delta p(t_n)\rvert\Tend n \infty 0.
  \end{equation*}
By compactness of $[0,T]$ and continuity of $(q_j,p_j)$, there would
exist $t_*\in [0,T]$ such that
\begin{equation*}
  q_1(t_*)=q_2(t_*),\quad p_1(t_*)=p_2(t_*). 
\end{equation*}
In the cases $\alpha=1$ and $\alpha=1/2$, $(q_j,p_j)$ is given by the
classical Hamiltonian flow \eqref{eq:traj}: uniqueness for
\eqref{eq:traj} implies $(q_{10},p_{10}) =(q_{20},p_{20})$, hence 
a contradiction. 

The case $\alpha=0$ is a bit more delicate, since $(q_j,p_j)$ is no
longer given by a Hamiltonian flow. From \eqref{eq:trajmodif}, we
infer:
\begin{equation*}
  \left\{
\begin{aligned}
  \frac{d(\delta q)}{dt} &= \delta p,\\
\frac{d(\delta p)}{dt}&=\nabla V\(t,q_2(t)\)-\nabla
V\(t,q_1(t)\)+\|a_1\|_{L^2}^2 \(\nabla K\(-\delta
q(t)\)-\nabla K(0)\)\\
&\quad +\|a_2\|_{L^2}^2 \(\nabla K(0)-\nabla K\(\delta
q(t)\)\). 
\end{aligned}
\right.
\end{equation*}
In view of Assumption~\ref{hyp:gen}, there exists $C$ independent of
$t$ such that
\begin{equation*}
  \left| \frac{d(\delta q)}{dt}\right| + \left| \frac{d(\delta
      p)}{dt}\right| \le C \(|\delta p| +|\delta q|\). 
\end{equation*}
Gronwall's Lemma yields a contradiction, and the lemma is proved
in the three cases.
\end{proof}

\subsection{Proof of Proposition~\ref{prop:microloc}}
\label{sec:proofpropmicro}
From Lemma~\ref{lem:microloc}, if suffices to prove the estimate of
Proposition~\ref{prop:microloc} in either of the two cases $|\delta
q(t)|\ge \eta$, or $|\delta p(t)|\ge \eta$. 

\smallbreak

\noindent {\bf First case.} If $|\delta
q(t)|\ge \eta$, we use Cauchy--Schwarz inequality to infer 
\begin{align*}
  |I^\eps(t,y)|&\le \|\mathcal K\|_{L^\infty}\int
  \frac{\<z\>^k}{\<z\>^k}|u_1(t,z)| 
  \frac{\<z+\frac{\delta q(t)}{\sqrt\eps}\>^{k}}{\<z+\frac{\delta
    q(t)}{\sqrt\eps}\>^{k}}\left|u_2\(t,z+\frac{\delta 
    q(t)}{\sqrt\eps}\)\right|dz \\
&\le \|\mathcal K\|_{L^\infty} \|u_1(t)\|_{\Sigma^k}\|u_2(t)\|_{\Sigma^k}
\sup_{z\in \R^d} \<z\>^{-k}\<z+\frac{\delta
    q(t)}{\sqrt\eps}\>^{-k}.
\end{align*}
In view of Peetre inequality (see
e.g. \cite{AlGe07,Treves1}),
\begin{equation*}
  \sup_{z\in \R^d} \<z\>^{-k}\<z+\frac{\delta
    q(t)}{\sqrt\eps}\>^{-k}\le C_k \(\frac{\sqrt\eps}{|\delta
    q(t)|}\)^k\le \frac{C_k}{\eta^k}\eps^{k/2}. 
\end{equation*}
\smallbreak

\noindent {\bf Second case.} If $|\delta p(t)|\ge \eta$, we perform
repeated integrations by parts (like in the standard proof of the nonstationary
phase lemma, see e.g. \cite{AlGe07}) relying on the relation
\begin{equation*}
  e^{iz\cdot \delta
    p(t)/\sqrt\eps} = -i\frac{\sqrt\eps}{|\delta p(t)|^2} \sum_{\ell
    =1}^d (\delta 
    p(t))_\ell\frac{\d }{\d z_\ell}\(e^{iz\cdot \delta
    p(t)/\sqrt\eps}\) . 
\end{equation*}
Note that since we assume $\mathcal K\in W^{\ell,\infty}$ and $u_j\in
\Sigma^k$, we 
perform no more 
than $\min(\ell,k)$ integrations by parts, and Cauchy--Schwarz inequality
yields
\begin{equation*}
  |I^\eps(t,y)| \le \frac{1}{\eta^\ell}\|\mathcal K\|_{W^{\ell,\infty}}
  \|u_1(t)\|_{\Sigma^k}\|u_2(t)\|_{\Sigma^k} \eps^{\min (\ell,k)/2}.
\end{equation*}
The proof of the proposition is complete.

\section{Proof of convergence in the critical case}
\label{sec:proofcritical}

In this section, we complete the proof of
Theorem~\ref{theo:critical}. First, we recall that as a consequence of
\cite{Fujiwara79,Fujiwara}, the system for the envelopes in the linear
case is well-posed in $\Sigma^k$:
\begin{lemma}
  Let $k\in \N$, and $a\in \Sigma^k$. Then \eqref{eq:linearu} has a
  unique solution $u\in C(\R_+;\Sigma^k)$. In addition, the following
  conservation holds:
  \begin{equation*}
    \|u(t)\|_{L^2(\R^d)}=\|a\|_{L^2(\R^d)},\quad \forall t\ge 0. 
  \end{equation*}
\end{lemma}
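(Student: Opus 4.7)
The plan is to follow the standard Fujiwara-type approach for Schr\"odinger equations with at-most-quadratic (in space) potentials, then upgrade the regularity from $L^2$ to $\Sigma^k$ by a commutator argument.

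First, I would establish existence, uniqueness and $L^2$-conservation. The time-dependent Hamiltonian
\begin{equation*}
  H(t) = -\frac{1}{2}\Delta + \frac{1}{2}\<y,\nabla^2 V(t,q(t))y\>
\end{equation*}
is self-adjoint on a suitable domain since, by Assumption~\ref{hyp:gen}, $\nabla^2 V(t,q(t))$ is real, symmetric, continuous in $t$, and the potential is quadratic in $y$. Fujiwara's construction (\cite{Fujiwara79,Fujiwara}) gives a unitary propagator $U(t,s)$ on $L^2(\R^d)$ solving \eqref{eq:linearu}, so $u(t) = U(t,0)a$ is the unique solution in $C(\R_+;L^2(\R^d))$ with $a\in L^2(\R^d)$, and $\|u(t)\|_{L^2} = \|a\|_{L^2}$ follows from unitarity (equivalently, from $\frac{d}{dt}\|u\|_{L^2}^2 = 2\,\IM\<Hu,u\> = 0$ because $V$ is real-valued).

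Next, I would prove persistence of $\Sigma^k$ regularity by a commutator/energy argument. The key observation is that the operators $y_j$ and $\partial_{y_j}$ satisfy
\begin{equation*}
  [H(t),\partial_{y_j}] = -\sum_{k=1}^d \bigl(\nabla^2 V(t,q(t))\bigr)_{jk} y_k,\qquad [H(t),y_j] = -\partial_{y_j},
\end{equation*}
so the commutators are again first-order in the pair $(y,\nabla)$, with coefficients uniformly bounded in $t$ on every $[0,T]$ (by Assumption~\ref{hyp:gen}, $\nabla^2 V$ is bounded). Setting, for a multi-index $\gamma = (\alpha,\beta)$ with $|\alpha|+|\beta|\le k$, $v_\gamma = y^\alpha\partial_y^\beta u$, formal differentiation shows that $v_\gamma$ solves an equation of the form
\begin{equation*}
  i\partial_t v_\gamma + \frac{1}{2}\Delta v_\gamma = \frac{1}{2}\<y,\nabla^2 V(t,q(t))y\>v_\gamma + \sum_{|\gamma'|<|\gamma|} c_{\gamma\gamma'}(t)\, v_{\gamma'},
\end{equation*}
with $c_{\gamma\gamma'}$ bounded on $[0,T]$. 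Taking the $L^2$ inner product with $v_\gamma$, taking the imaginary part, and summing over $|\gamma|\le k$ yields
\begin{equation*}
  \frac{d}{dt}\|u(t)\|_{\Sigma^k}^2 \le C_T\,\|u(t)\|_{\Sigma^k}^2,
\end{equation*}
and Gronwall's lemma gives $u\in L^\infty_{\mathrm{loc}}(\R_+;\Sigma^k)$. Continuity in time with values in $\Sigma^k$ then follows from the equation itself.

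The only subtlety, and the step on which one must be a bit careful, is justifying the formal computation that $v_\gamma$ actually satisfies this equation in a strong enough sense to run the energy estimate. The standard remedy is to work with a regularized problem (for instance, replacing $H(t)$ by $H(t)(1+\delta H(t))^{-1}$, or approximating $a\in\Sigma^k$ by Schwartz data $a_n\to a$ in $\Sigma^k$, applying the estimate to the Schwartz solutions $u_n$ where all manipulations are legal, and passing to the limit using the uniform Gronwall bound and the $L^2$-contraction). This approximation procedure is routine given the unitary propagator from Fujiwara's theory, and it yields the lemma for every $k\in\N$.
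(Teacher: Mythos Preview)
Your approach is correct and matches the paper's treatment: the paper does not give a proof of this lemma but simply records it as a consequence of Fujiwara's results \cite{Fujiwara79,Fujiwara}, and your argument (unitary propagator from Fujiwara for the $L^2$ theory, then commutator/energy estimates to propagate $\Sigma^k$) is exactly the standard way to unpack that citation.

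One small correction worth noting: the commutators $[H(t),y_j]=-\partial_{y_j}$ and $[H(t),\partial_{y_j}]=-\sum_k(\nabla^2V)_{jk}y_k$ \emph{preserve} the total order in $(y,\nabla)$ rather than lower it, so the source terms in the equation for $v_\gamma$ involve $v_{\gamma'}$ with $|\gamma'|\le|\gamma|$, not strictly $|\gamma'|<|\gamma|$. This does not affect your conclusion, since you sum over all $|\gamma|\le k$ and the resulting closed linear system still yields the Gronwall bound.
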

We infer that if $a_1,a_2\in \Sigma^3$, then
$u_1,u_2$, given by \eqref{eq:envcrit}, belong to
$C(\R_+;\Sigma^3)$. Corollary~\ref{cor:reste} implies that $\apsi$
satisfies
\begin{equation*}
  i\eps\d_t \apsi +\frac{\eps^2}{2}\Delta \apsi = V\apsi +\eps\(K\ast
  |\apsi|^2\)\apsi +\eps r^\eps ;\quad \apsi(0,x)=\psi^\eps(0,x),
\end{equation*}
where the source term $r^\eps$ satisfies:
\begin{equation*}
  \forall T>0,\ \exists C>0,\quad \sup_{t\in
    [0,T]}\|r^\eps(t)\|_{L^2(\R^d)}\le C\sqrt\eps. 
\end{equation*}
Denote by $w^\eps=\psi^\eps-\apsi$ the error term. It satisfies
\begin{align*}
  i\eps\d_t w^\eps +\frac{\eps^2}{2}\Delta w^\eps=Vw^\eps +\eps\( \(K\ast
  |\psi^\eps|^2\)\psi^\eps - \(K\ast
  |\apsi|^2\)\apsi\)-\eps r^\eps,
\end{align*}
with $w^\eps_{\mid t=0}=0$. Writing
\begin{align*}
 \(K\ast
  |\psi^\eps|^2\)\psi^\eps - &\(K\ast
  |\apsi|^2\)\apsi \\
&=  \(K\ast |w^\eps+\apsi|^2\)\(w^\eps+\apsi\)- \(K\ast
  |\apsi|^2\)\apsi \\
&= \(K\ast |w^\eps+\apsi|^2\)w^\eps +\(K\ast\( |w^\eps+\apsi|^2-
|\apsi|^2\)\) \apsi, 
\end{align*}
energy estimates yield, for $t\in [0,T]$, and since
$\psi^\eps,\apsi$ (hence $w^\eps$) are uniformly bounded in
$L^\infty(\R_+;L^2(\R^d))$: 
\begin{align*}
  \|w^\eps(t)\|_{L^2}&\le \int_0^t \left\| K\ast\( |w^\eps+\apsi|^2-
|\apsi|^2\)(s)\right\|_{L^\infty}\|\apsi(s)\|_{L^2}ds \\
& \phantom{\le}+
\int_0^t\|r^\eps(s)\|_{L^2}ds \\
&\le C\int_0^t \left\| \( |w^\eps+\apsi|^2-
|\apsi|^2\)(s)\right\|_{L^1}ds+
\int_0^t\|r^\eps(s)\|_{L^2}ds\\
&\le C\int_0^t \|w^\eps(s)\|_{L^2} + \int_0^t\|r^\eps(s)\|_{L^2}ds,
\end{align*}
for $C>0$ independent of $\eps\in (0,1]$ and $t\ge 0$. 
Theorem~\ref{theo:critical} is then a consequence of Gronwall's
Lemma.

\begin{remark}\label{rem:sur}
  Assuming that we have proved the property $u_j\in
  C([0,T];\Sigma^3)$ in the case $\alpha=0$, which will stem from
  Proposition~\ref{prop:existenv0},  the conclusion of
  Corollary~\ref{cor:reste} holds. However,  
  the estimate given by the above approach is not satisfactory in the
  cases $\alpha=1/2$ and $\alpha=0$. We could prove this way:
\begin{equation*}
  \|\psi^\eps(t) -\apsi(t)\|_{L^2}\le C\sqrt\eps
  e^{Ct/\eps^{1-\alpha}}, \quad t\in [0,T], 
\end{equation*}
for $\alpha=1/2$ and $\alpha=0$, respectively. Contrary to the case
$\alpha=1$ (where Gronwall's Lemma yields a similar estimate), we can
only conclude that $\psi^\eps-\apsi$ is goes to zero on a small time
interval: there exist $c>0$ and $\theta>0$ such that
\begin{equation*}
  \sup_{0\le t\le c \eps^{1-\alpha} |\ln \eps|^\theta}\|\psi^\eps(t)
  -\apsi(t)\|_{L^2}\Tend   \eps 0 0. 
\end{equation*}
Corollary~\ref{cor:reste} is a consistency result, which is not enough
to infer convergence. 
This can be understood as a feature of supercritical regimes: a
different approach is needed, which requires more regularity from $V$,
$K$, and the initial data $a_j$. 
\end{remark}

\section{The envelope equations in the case $\alpha=0$}
\label{sec:refined}

In this section, we prove Proposition~\ref{prop:existenv0}. We first
remark that the last two terms in each equation involved in
\eqref{eq:systenv0} correspond to purely time-dependent potentials,
and can be treated thanks to the gauge transforms
\begin{equation}
  \label{eq:jauge}
  \left\{
    \begin{aligned}
      v_1(t,y)&=u_1(t,y)\exp\Big(-i\int_0^t \int \<z,\nabla^2
      K(0)z\>|u_1(s,z)|^2 dzds \\
&\phantom{=u_1(t,y)\exp\Big(}-i\int_0^t \int \<z,\nabla^2
      K\(\delta q(s)\)z\>|u_2(s,z)|^2 dzds\Big),\\
v_2(t,y)&=u_2(t,y)\exp\Big(-i\int_0^t \int \<z,\nabla^2
      K(0)z\>|u_2(s,z)|^2 dzds \\
&\phantom{=u_1(t,y)\exp\Big(}-i\int_0^t \int \<z,\nabla^2
      K\(-\delta q(s)\)z\>|u_1(s,z)|^2 dzds\Big).
    \end{aligned}
\right.
\end{equation}
Since $K$ is real-valued, we have $|v_j(t,y)|=|u_j(t,y)|$, and
\eqref{eq:jauge} is equivalent to
\begin{equation}
  \label{eq:gaugeinv}
  \left\{
    \begin{aligned}
      u_1(t,y)&=v_1(t,y)\exp\Big(i\int_0^t \int \<z,\nabla^2
      K(0)z\>|v_1(s,z)|^2 dzds \\
&\phantom{=u_1(t,y)\exp\Big(}+i\int_0^t \int \<z,\nabla^2
      K\(\delta q(s)\)z\>|v_2(s,z)|^2 dzds\Big),\\
u_2(t,y)&=v_2(t,y)\exp\Big(i\int_0^t \int \<z,\nabla^2
      K(0)z\>|v_2(s,z)|^2 dzds \\
&\phantom{=u_1(t,y)\exp\Big(}+i\int_0^t \int \<z,\nabla^2
      K\(-\delta q(s)\)z\>|v_1(s,z)|^2 dzds\Big).
    \end{aligned}
\right.
\end{equation}
Formally, $(u_1,u_2)$ solves \eqref{eq:systenv0} if and only if
$(v_1,v_2)$ solves
\begin{equation}
  \label{eq:systv}
  \left\{
\begin{aligned}
  i\d_t  v_1 +\frac{1}{2}\Delta v_1&= \frac{1}{2}\<y, M_1(t)y\>v_1 -
\<\nabla^2K(0) G_1(t),y\>v_1  \\
&\phantom{=} -
\<\nabla^2K\(\delta q(t)\) G_2(t),y\>v_1 , \\
i\d_t  v_2 +\frac{1}{2}\Delta v_2&= \frac{1}{2}\<y, M_2(t)y\>v_2 -
\<\nabla^2K(0) G_2(t),y\>v_2  \\
&\phantom{=}-
\<\nabla^2K\(-\delta q(t)\) G_1(t),y\>v_2 , 
\end{aligned}
\right.
\end{equation}
with the same initial data, $v_{j\mid t=0}=a_j$,
$j=1,2$, where the 
bounded, symmetric matrices $M_1$ and $M_2$ are defined in
\eqref{def:M1} and \eqref{def:M2}, respectively, and where we
have kept the notation
\begin{equation*}
  G_j(t)=\int z|v_j(t,z)|^2dz.
\end{equation*}
Note that the terms involved in the gauge transforms are well defined
when the functions are in $\Sigma^k$ with $k\ge 1$, so
Proposition~\ref{prop:existenv0} stems from the following:
\begin{proposition}\label{prop:v}
  Let $(q_1,p_1,q_2,p_2)$ be given by Lemma~\ref{lem:cauchylip}, and
  $a_1,a_2\in \Sigma^k$ with $k\ge 1$. Then \eqref{eq:systv} has a
  unique solution $(v_1,v_2)\in C(\R_+;\Sigma^k)$ with initial data
  $(a_1,a_2)$. In addition, 
  the following conservations hold:
  \begin{equation*}
    \|v_j(t)\|_{L^2(\R^d)}= \|a_j\|_{L^2(\R^d)},\quad \forall t\ge 0,\
    j=1,2. 
  \end{equation*}
\end{proposition}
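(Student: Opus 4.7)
The proof is a standard nonlinear fixed-point argument built on the linear theory already invoked in Lemma~\ref{lem:exist12}. The key observation is that once the vector-valued moments $G_j(t)=\int_{\R^d} z|v_j(t,z)|^2\,dz\in\R^d$ are prescribed as continuous functions of $t$, the system \eqref{eq:systv} decouples into two linear Schr\"odinger equations
\begin{equation*}
  i\d_t v_j +\tfrac12\Delta v_j = \tfrac12\<y, M_j(t) y\> v_j-\<B_j(t), y\> v_j,\quad j=1,2,
\end{equation*}
whose time-dependent potentials are at most quadratic in $y$ with coefficients uniformly bounded on bounded time intervals, by virtue of Assumption~\ref{hyp:gen}. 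By the theory of \cite{Fujiwara79,Fujiwara} such equations generate propagators which preserve $\Sigma^k$ for every $k\ge 0$. The plan is therefore to set up the nonlinear problem on $X_T=C([0,T];\Sigma^1)^2$ for $T>0$ small, with $\Phi$ mapping $(v_1,v_2)$ to the solution $(\tilde v_1,\tilde v_2)$ of the linear system obtained by plugging $G_j=G_j(v)$ into the above equations, and to show that $\Phi$ is a contraction on a closed ball of $X_T$ centered at the free evolution of $(a_1,a_2)$.

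Two estimates drive the fixed-point argument. First, Cauchy--Schwarz gives $|G_j(v)(t)|\le\|v_j(t)\|_{L^2}\|y v_j(t)\|_{L^2}\lesssim \|v\|_{X_T}^2$, and the identity $|v_j|^2-|w_j|^2=2\RE\bigl((v_j-w_j)\overline{w_j}\bigr)+|v_j-w_j|^2$ combined with Cauchy--Schwarz yields, on any ball of $X_T$ of radius $R$, the Lipschitz estimate $\|G_j(v)-G_j(w)\|_{L^\infty([0,T];\R^d)}\lesssim R\,\|v-w\|_{X_T}$. Second, standard energy estimates for the linear equation on $\tilde v_j$, $y\tilde v_j$, and $\nabla\tilde v_j$ (based on the commutator identities $[y_\ell,\Delta]=-2\d_\ell$ and $\nabla V_j=M_j y-B_j$) give $\|\tilde v\|_{X_T}\le C(T,R)\|a\|_{\Sigma^1}$ as well as, for the difference, $\|\Phi(v)-\Phi(w)\|_{X_T}\le C(R)T\|v-w\|_{X_T}$; for $T$ small enough this is a contraction, producing a local $\Sigma^1$-solution.

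Global existence follows from two a priori bounds. The $L^2$-conservation is immediate: the potentials in \eqref{eq:systv} are real-valued ($M_j$ is symmetric, $\nabla^2 K(0)$ and $\nabla^2 K(\pm\delta q)$ are real symmetric matrices, and $G_j\in\R^d$), whence $\frac{d}{dt}\|v_j\|_{L^2}^2=0$. For the $\Sigma^1$ bound I would write the equations satisfied by $y v_j$ and $\nabla v_j$, which receive inhomogeneous terms $\nabla v_j$ and $(M_j y-B_j)v_j$ respectively; with $Y(t)=\|y v_1\|_{L^2}+\|y v_2\|_{L^2}$ and $D(t)=\|\nabla v_1\|_{L^2}+\|\nabla v_2\|_{L^2}$, and using the crucial bound $|B_j(t)|\le C\bigl(\|a_1\|_{L^2}+\|a_2\|_{L^2}\bigr)Y(t)$ stemming from $|G_j|\le \|a_j\|_{L^2}\|yv_j\|_{L^2}$, the $L^2$-energy estimates produce the linear differential inequalities
\begin{equation*}
\dot Y\le D,\qquad \dot D\le C(1+Y),
\end{equation*}
which preclude finite-time blow up and give at most exponential growth on any bounded interval. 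The local solution therefore extends to $C(\R_+;\Sigma^1)^2$, and once this is established the higher-regularity statement for $a_j\in\Sigma^k$, $k\ge 2$, follows directly by reapplying Fujiwara's theorem to the now fully determined linear equations.

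The main obstacle is closing the $\Sigma^1$ a priori estimate in the presence of the nonlinear self-coupling through $G_j$: a crude bound would treat the linear-in-$y$ term $\<B_j,y\>v_j$ as contributing a quantity quadratic in $\|v\|_{\Sigma^1}$ through $B_j$, which would threaten finite-time blow up. The observation that $|G_j|\le \|a_j\|_{L^2}\|yv_j\|_{L^2}$ is only linear in $Y$, thanks to the $L^2$-conservation that can be used already during the a priori estimate, is what converts the naive quadratic Gr\"onwall threat into the benign linear system above.
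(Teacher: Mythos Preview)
Your approach is correct and closely parallels the paper's, with one genuine methodological difference worth recording. Both arguments set up the same iteration: freeze the moments $G_j$ from the previous step, solve the resulting linear Schr\"odinger equations with at-most-quadratic potentials via Fujiwara, and then show convergence. Where you and the paper diverge is in the \emph{global a~priori bound}. The paper does not estimate $\|yv_j\|_{L^2}$ and $\|\nabla v_j\|_{L^2}$ directly; instead it derives, from the Ehrenfest-type identities $\dot G=J:=\IM\int\bar v\nabla v$ and $\dot J=-MG-F\|v\|_{L^2}^2$, a closed second-order ODE $\ddot G^{(n)}+M(t)G^{(n)}=F(G^{(n-1)})$ for the moments, obtains a uniform bound on $f_n=|G^{(n)}|^2+|\dot G^{(n)}|^2$ by Gronwall, and only afterwards lifts this to $\Sigma^k$ via energy estimates. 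Your route---using $|G_j|\le\|a_j\|_{L^2}\|yv_j\|_{L^2}$ to turn the apparent quadratic Gronwall into the linear system $\dot Y\le D$, $\dot D\le C(1+Y)$---is a valid and slightly more direct alternative; both hinge on the same $L^2$-conservation. The paper's moment-ODE gives a cleaner finite-dimensional picture, while your argument avoids the auxiliary quantity $J$ altogether.

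One small gap: the contraction estimate $\|\Phi(v)-\Phi(w)\|_{X_T}\le C(R)T\|v-w\|_{X_T}$ with $X_T=C([0,T];\Sigma^1)^2$ does not close as written when $k=1$. Applying $y$ or $\nabla$ to the difference equation produces the source $y\<\delta B,y\>\tilde w_j$ (respectively $\<\delta B,y\>\nabla\tilde w_j$), whose $L^2$ norm needs $\||y|^2\tilde w_j\|_{L^2}$ or $\|y\nabla\tilde w_j\|_{L^2}$, i.e.\ $\Sigma^2$ control on $\Phi(w)$, which you do not have from $a_j\in\Sigma^1$. The standard repair is to contract in $C([0,T];L^2)^2$ on a closed $\Sigma^1$-ball: your own identity $|v_j|^2-|w_j|^2=2\RE((v_j-w_j)\overline{w_j})+|v_j-w_j|^2$ already gives $|G_j(v)-G_j(w)|\lesssim R\,\|v-w\|_{C_TL^2}$ (put the weight $z$ on $w_j$ and on $v_j-w_j$ respectively, and use $v,w\in B_R$), and then the $L^2$ energy estimate for the difference yields $\|\Phi(v)-\Phi(w)\|_{C_TL^2}\le C(R)T\|v-w\|_{C_TL^2}$. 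The paper's ``convergence in $C([0,T];\Sigma)$'' step has the same hidden $\Sigma^2$ requirement on the iterates and is fixed the same way.
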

\begin{proof}
    The main difficulty is that since the last two terms in each equation
  involve  time dependent potentials which are unbounded in $y$, they
  cannot be treated by perturbative arguments. So to construct a local
  solution, we modify the standard Picard iterative scheme in the same
  fashion as in \cite{CaCa11}, to
  consider
\begin{equation}
  \label{eq:vn}
  \left\{
\begin{aligned}
  i\d_t  v_1^{(n)} +\frac{1}{2}\Delta v_1^{(n)}&= \frac{1}{2}\<y,
  M_1(t)y\>v_1^{(n)} - 
\<\nabla^2K(0) G_1^{(n-1)}(t),y\>v_1^{(n)}  \\
&\phantom{=} -
\<\nabla^2K\(\delta q(t)\) G_2^{(n-1)}(t),y\>v_1^{(n)} , \\
i\d_t  v_2^{(n)} +\frac{1}{2}\Delta v_2^{(n)}&= \frac{1}{2}\<y,
M_2(t)y\>v_2^{(n)} - 
\<\nabla^2K(0) G_2^{(n-1)}(t),y\>v_2^{(n)}  \\
&\phantom{=}-
\<\nabla^2K\(-\delta q(t)\) G_1^{(n-1)}(t),y\>v_2^{(n)} , 
\end{aligned}
\right.
\end{equation}
with $v_{j\mid t=0}^{(n)}=a_j$ for all $n$, $v_j^{(0)}(t,y)=a_j(y)$, and
\begin{equation*}
  G_{j}^{(k)}(t) = \int_{\R^d}z\left|v_j^{(k)}(s,z)\right|^2ds.
\end{equation*}
At each step, we solve  a decoupled system of linear equation, with 
time dependent 
potentials which are at most quadratic in space. If
$G_1^{(n-1)},G_2^{(n-1)}\in L^\infty_{\rm 
  loc}(\R_+)$, \cite{Fujiwara} ensures the existence of
$v_1^{(n)},v_2^{(n)} \in C(\R_+; 
L^{2}(\R^{d}))$.  In addition, we have
\begin{equation*}
  \left\| v_j^{(n)}(t)\right\|_{L^2(\R^d)}=\|a_j\|_{L^2(\R^d)} \quad
  \forall t\ge 0, \ j=1,2. 
\end{equation*}
Applying the operators $y$ and $\nabla$ to
\eqref{eq:vn} yields a closed system of estimates, from which we infer
that $ v_j^{(n)} \in C(\R_+; 
\Sigma)$, hence $G_j^{(n)} \in L^\infty_{\rm  loc}(\R_+)$. Therefore,
the scheme is well-defined. Higher
order regularity can be proven similarly: for $k\ge 1$, by applying
$k$ times the operators $y$ and $\nabla$ to
\eqref{eq:vn}, we check that  $v_1^{(n)},v_2^{(n)}\in C(\R_+;
\Sigma^k)$. As a matter of fact, due to the particular structure of
\eqref{eq:vn},  the only informations needed to prove this property
are $a_j\in \Sigma^k$ and $v_j^{(n-1)}\in C(\R_+; 
\Sigma)$. 
\smallbreak

To prove the
convergence of this scheme we need more precise (uniform in $n$)
estimates. A general computation shows that if $v$ solves
\begin{equation*}
  i\d_t v+\frac{1}{2}\Delta v = \frac{1}{2}\<y,M(t)y\>v+ F(t)\cdot y\,
  v,
\end{equation*}
where $M(t)$ is a real-valued, symmetric matrix, and $F(t)$ is a real-valued
vector, 
then  $\displaystyle G(t)= \int z|v(t,z)|^2dz$ satisfies formally
\begin{align*}
  \dot G(t) &= \IM \int \bar v \nabla v=:J(t),\\
\dot J(t)&= -\int \( M(t)y +F(t)\)|v(t,y)|^2dy =
-M(t)G(t)-F(t)\|v\|_{L^2}^2,
\end{align*}
where the last expression uses implicitly the fact that the $L^2$-norm
of $v$ is independent of time. We have in particular:
\begin{equation*}
  \ddot G(t) +M(t)G(t)=-\|v\|_{L^2}^2F(t).
\end{equation*}
In our case, this yields:
\begin{align}
  \ddot G_1^{(n)} + M_1(t)G_1^{(n)} &=  \|a_1\|_{L^2}^2\nabla^2
  K(0)G_1^{(n-1)}  +\|a_2\|_{L^2}^2\nabla^2
  K\(\delta q(t)\)G_2^{(n-1)},\label{eq:dotG1}\\
\ddot G_2^{(n)} + M_2(t)G_2^{(n)} &=  \|a_2\|_{L^2}^2\nabla^2
  K(0)G_2^{(n-1)}  +\|a_1\|_{L^2}^2\nabla^2
  K\(-\delta q(t)\)G_1^{(n-1)}. \label{eq:dotG2}
\end{align}
Let 
$$f_n(t) = \left|\dot G_1^{(n)}(t)\right|^2 +\left|\dot G_2^{(n)}(t)\right|^2 +
\left|G_1^{(n)}(t)\right|^2 + \left| G_2^{(n)}(t)\right|^2.$$
We have 
\begin{align*}
  \dot f_n(t)&\le 2\sum_{j=1,2}\(\left\lvert\dot G_j^{(n)}(t)\right\rvert
\left\lvert \ddot G_j^{(n)}(t)\right\rvert + \left\lvert\dot
  G_j^{(n)}(t)\right\rvert
\left\lvert  G_j^{(n)}(t)\right\rvert \)\\
&\le C f_n(t) + C\sum_{j=1,2}\left|G_j^{(n-1)}(t)\right|^2,
\end{align*}
for some $C$ independent of $t$ and $n$, since $\nabla^2 V, \nabla^2 K\in
L^\infty$, and where we have used \eqref{eq:dotG1}-\eqref{eq:dotG2} and
Young's inequality. By Gronwall's Lemma, we infer
\begin{equation*}
  f_n(t)\le f_n(0)e^{Ct} +C\int_0^t e^{C(t-s)}f_{n-1}(s)ds.
\end{equation*}
With our definition of the scheme, $f_n(0)$ does not depend on $n$:
\begin{equation*}
  f_n(0)=\sum_{j=1,2} \(\left|\IM \int \bar a_j\nabla a_j\right|^2+
  \left| \int z|a_j(z)|^2dz\right|^2 \)=:C_0. 
\end{equation*}
Therefore,
\begin{equation*}
  f_n(t)\le C_0 e^{Ct} +C\int_0^t e^{C(t-s)}f_{n-1}(s)ds,
\end{equation*}
and by induction, we infer
\begin{equation*}
  f_n(t)\le 2C_0 e^{3Ct},\quad t\ge 0. 
\end{equation*}
By using energy estimates (applying the operators $y$ and $\nabla$
successively to the equations), we infer that there exists $C_1$
independent of $t\ge 0$ and $n$ such that 
\begin{equation*}
  \sum_{j=1,2}\left\|v_j^{(n)}(t)\right\|_{\Sigma^k} \le C_1 e^{C_1 t}. 
\end{equation*}
The convergence of the sequence $(v_1^{(n)},v_2^{(n)})$ then follows:
we check that $v_n$ converges in 
$C([0,T];\Sigma)$ if 
$T>0$ is sufficiently small. To simplify the presentation, we present
this argument in the case of a single envelope equation, the case of
\eqref{eq:vn} bearing no extra difficulty:
\begin{equation}\label{eq:vn2}
i\d_{t}v^{(n)}+\frac{1}{2}\Delta v^{(n)}=\frac{1}{2} \<y, M(t)y\>
v^{(n)}+\<Q(t){G}^{(n-1)}(t), y\>v^{(n)},
\end{equation}
where $Q(t)$ is a real-valued, symmetric matrix, with $Q\in
L^\infty(\R_+)$. Denoting by
\begin{equation*}
  H(t)=-\frac{1}{2}\Delta + \frac{1}{2} \<y, M(t)y\>,
\end{equation*}
we have
\begin{align*}
  i\d_t\( v^{(n)}-v^{(n-1)}\)&=H\( v^{(n)}-v^{(n-1)}\) +
  \<Q(t){G}^{(n-1)}(t), y\>\( v^{(n)}-v^{(n-1)}\)\\
&\phantom{=}  +\<Q(t)\({G}^{(n-1)}(t)-{G}^{(n-2)}(t)\) , y\>v^{(n-1)} 
\end{align*}
Energy estimates and the above uniform bound yield
\begin{align*}
  \left\|v^{(n)}(t)-v^{(n-1)}(t)\right\|_{L^2}&\le
  C\int_0^t\left|{G}^{(n-1)}(s)-{G}^{(n-2)}(s)\right|ds \\
&\le C \int_0^t
  e^{C_1 s} \left\|v^{(n-1)}(s)-v^{(n-2)}(s)\right\|_{\Sigma}ds
\end{align*}
By applying the operators $y$ and $\nabla$ to \eqref{eq:vn2}, we
obtain similarly:
\begin{equation*}
  \left\|v^{(n)}(t)-v^{(n-1)}(t)\right\|_{\Sigma}
\le C \int_0^t
  e^{C_1 s} \left\|v^{(n-1)}(s)-v^{(n-2)}(s)\right\|_{\Sigma}ds.
\end{equation*}
Therefore, we can find $T>0$ such that the sequence $v^{(n)}$
converges in $C([0,T];\Sigma)$, to $v\in C([0,T];\Sigma^k)$. The
uniform bounds for the sequence $v^{(n)}$ imply that $v$ is global in
time: $v\in C(\R_+;\Sigma^k)$, with $\Sigma^k$-norms growing at most
exponentially in time. 
\end{proof}

\section{Convergence in supercritical cases: scheme of the proof}
\label{sec:scheme}

We present the proof of Theorem~\ref{theo:alpha0} in details; the
proof of Theorem~\ref{theo:alpha12} can easily be adapted (see
Remark~\ref{rem:12} below).

\subsection{The general picture}

 In \cite{APPP11,CaCa11}, where the case of only one wave packet is
considered, the
 proof of stability  relies on a change of unknown function:
 writing 
 \begin{equation*}
   \psi^\eps(t,x) = \eps^{-d/4}u^\eps\(t,\frac{x-q(t)}{\sqrt\eps}
   \)e^{i\(S^\eps(t)+(x-q(t))\cdot  p(t)\)/\eps},
 \end{equation*}
with $S^\eps$, $q$ and $p$ as given by the construction of the
approximate solution, it is \emph{equivalent} to work on $\psi^\eps$ or
$u^\eps$ in order to prove an error estimate, since
\begin{equation*}
  \|\psi^\eps(t)-\apsi (t)\|_{L^2(\R^d)} = \|u^\eps(t)-u (t)\|_{L^2(\R^d)}.
\end{equation*}
Passing from the unknown $\psi^\eps$ to $u^\eps$ amounts to using very
fine geometric properties related to the dynamics: the modified action $S^\eps$,
and $(q,p)$. One
changes the origin in phase space, to work in the moving frame
associated to the wave packet. In the case of two wave packets, there
are two moving frames, so the approach that we follow is different. We
construct a solution to \eqref{eq:r3alpha} of the form
\begin{equation}
  \label{eq:psisurcrit}
  \psi^\eps(t,x) = \eps^{-d/4} \sum_{j=1,2}u_j^\eps 
\left(t,\frac{x-q_j(t)}{\sqrt\eps}\right)e^{i\left(S_j^\eps(t)+p_j(t)\cdot
    (x-q_j(t))\right)/\eps},
\end{equation}
where the quantities $(q_j,p_j)$ and $S_j^\eps$ are those given by the
construction of $\apsi$, so we consider two unknown functions,
$u_1^\eps$ and $u_2^\eps$. To do so, we derive formally a system for
$(u_1^\eps,u_2^\eps)$, which is morally equivalent to
\eqref{eq:r3alpha}: rigorously, the solution to this system yields a
solution to \eqref{eq:r3alpha}, and by uniqueness for
\eqref{eq:r3alpha}, the relation \eqref{eq:psisurcrit} is valid. In
turn, the construction of the solution $(u_1^\eps,u_2^\eps)$ on
arbitrary time intervals relies on a bootstrap argument, consisting of
a comparison of a modification of $(u_1^\eps,u_2^\eps)$ with
$(u_1,u_2)$, defined in \eqref{eq:systenv0}. This modification
eventually corresponds to the presence 
of the phase shifts $\theta_j$ in Theorem~\ref{theo:alpha0}. 
\smallbreak

In order to shorten the formulas, we consider indices in $\Z/2\Z$:
typically, $q_j$ stands for $q_1$ whenever $j=1$ or $3$. Plugging
\eqref{eq:psisurcrit} into \eqref{eq:r3alpha} in the case $\alpha=0$,
we find:
\begin{equation*}
  i\eps \d_t \psi^\eps + \frac{\eps^2}{2}\Delta \psi^\eps -
  V\psi^\eps -\(K\ast 
  |\psi^\eps|^2\)
  \psi^\eps=\eps^{-d/4}\sum_{j=1,2}e^{i\phi_j^\eps(t,x)} N_j^\eps,
\end{equation*}
where we have denoted
\begin{align*}
  & \phi_j^\eps(t,x) = S_j^\eps(t)+p_j(t)\cdot    \(x-q_j(t)\),\\
& N_j^\eps = i\eps \d_t 
  u_j^\eps -
  u_j^\eps\d_t \phi_j^\eps + \frac{\eps}{2}\Delta u_j^\eps
  -\frac{|p_j(t)|^2}{2}u_j^\eps 
-V\(t,q_j(t)+y_j\sqrt\eps\) u_j^\eps - \tilde V^{\rm
    NL}_j u_j^\eps, \\
& \tilde V_j^{\rm NL}(t,y_j) = \int K\(\sqrt\eps
  (y_j-z)\)|u_j^\eps(t,z)|^2dz \\
&\phantom{\tilde V_j^{\rm NL}(t,y_j) =}+ \int K\(q_j-q_{j+1} + \sqrt\eps
  (y_j-z)\)|u_{j+1}^\eps(t,z)|^2dz\\
&\phantom{\tilde V_j^{\rm NL}(t,y_j) =}+2\RE
e^{i\(S_j^\eps-S_{j+1}^\eps-q_j\cdot p_j + q_{j+1}\cdot 
  p_{j+1}\)/\eps}\times\\
&\phantom{\tilde V_j^{\rm NL}}\times\int
K\(\sqrt\eps(y_j-z)\)e^{iz\cdot (p_j-p_{j+1})/\sqrt\eps} 
u_j^\eps(t,z)\overline u_{j+1}^\eps\( t,z
+\frac{q_j-q_{j+1}}{\sqrt\eps}\) dz .
\end{align*}
Note that the computations which we do not detail correspond to the
computations presented in Section~\ref{sec:rough}, up to the fact that
now, we do not perform Taylor expansions for $V$ or $K$. As in
Section~\ref{sec:rough}, we distinguish the variables $y_1$ and $y_2$.
\smallbreak

Our approach
consists in considering the set of coupled, nonlinear equations
$$N_1^\eps=N_2^\eps=0.$$
It is important to notice at this stage of the
construction that this system conserves formally the $L^2$ norms:
since we naturally impose $u^\eps_{j\mid t=0}=a_j$, we have 
\begin{equation*}
  \|u^\eps_j(t)\|_{L^2(\R^d)} = \|a_j\|_{L^2(\R^d)} ,\quad j=1,2,
\end{equation*}
as long as $(u_1^\eps,u_2^\eps)$ is well defined. This property is the
reason why we can perform important reductions in the system. Taking into
account the expression of the modified actions $S_j^\eps$, we
find:  
\begin{align*}
  N_j^\eps &= i\eps \d_t 
  u_j^\eps -
  u_j^\eps\(\dot S_j^\eps(t) +\sqrt \eps \dot p_j(t)\cdot
  y_j-p_j(t)\cdot \dot q_j(t)\) \\
& + \frac{\eps}{2}\Delta u_j^\eps
  -\frac{|p_j(t)|^2}{2}u_j^\eps 
-V\(t,q_j(t)+y_j\sqrt\eps\) u_j^\eps - \tilde V^{\rm
    NL}_j(t,y_j) u_j^\eps\\
&= i\eps \d_t 
  u_j^\eps- \(\sqrt \eps \dot p_j(t)\cdot
  y_j-p_j(t)\cdot \dot q_j(t)\)u_j^\eps \\
&- \(\frac{1}{2}|p_j|^2 -V(t,q_j) -
  K(0)\|a_j\|_{L^2} - K(q_j-q_{j+1})\|a_{j+1}\|_{L^2}^2\)u_j^\eps\\
& + \(\sqrt\eps
  \nabla K(0)\cdot G_j(t) +\sqrt\eps \nabla K(q_j-q_{j+1})\cdot
  G_{j+1}(t)\)u_j^\eps \\ 
& + \frac{\eps}{2}\Delta u_j^\eps
  -\frac{|p_j(t)|^2}{2}u_j^\eps 
-V\(t,q_j(t)+y_j\sqrt\eps\) u_j^\eps - \tilde V^{\rm
    NL}_j(t,y_j) u_j^\eps\\
&=i\eps \d_t  u_j^\eps+ \frac{\eps}{2}\Delta u_j^\eps-\sqrt \eps \dot p_j(t)\cdot
  y_j u_j^\eps\\
&-\(V\(t,q_j(t)+y_j\sqrt\eps\) -V(t,q_j)\) u_j^\eps\\
&- \(\tilde V^{\rm
    NL}_j(t,y_j) -
  K(0)\|a_j\|_{L^2} - K(q_j-q_{j+1})\|a_{j+1}\|_{L^2}^2\)u_j^\eps\\
& + \(\sqrt\eps
  \nabla K(0)\cdot G_j(t) +\sqrt\eps \nabla K(q_j-q_{j+1})\cdot
  G_{j+1}(t)\)u_j^\eps .
\end{align*}
If we now take into account the expression of $\dot p_j$, given in
\eqref{eq:trajmodif}, we infer: 
\begin{align*}
  N_j^\eps &=i\eps \d_t  u_j^\eps+ \frac{\eps}{2}\Delta u_j^\eps\\
&+ \sqrt\eps y_j\cdot \(\nabla V(t,q_j)+ \|a_j\|_{L^2}^2\nabla K(0) + 
\|a_{j+1}\|_{L^2}^2\nabla K(q_j-q_{j+1})\)u_j^\eps\\ 
&-\(V\(t,q_j(t)+y_j\sqrt\eps\) -V(t,q_j)\) u_j^\eps\\
&- \(\tilde V^{\rm
    NL}_j(t,y_j) -
  K(0)\|a_j\|_{L^2}^2 - K(q_j-q_{j+1})\|a_{j+1}\|_{L^2}^2\)u_j^\eps\\
& + \(\sqrt\eps
  \nabla K(0)\cdot G_j(t) +\sqrt\eps \nabla K(q_j-q_{j+1})\cdot
  G_{j+1}(t)\)u_j^\eps .
\end{align*}
It is now natural to introduce the following notations:
\begin{align*}
  V_j^\eps(t,y_j) & = \frac{1}{\eps}\(V\(t,q_j(t)+y_j\sqrt\eps\)
  -V\(t,q_j(t)\)  - \sqrt\eps y_j\cdot \nabla V\(t,q_j(t)\)\),\\
K_{j,{\rm diag}}^\eps(t,y_j)&= \frac{1}{\eps}\(K\(\sqrt\eps y_j\)- K(0)-\sqrt\eps
y_j\cdot \nabla K(0)\),\\
K_{j,{\rm off}}^\eps(t,y_j)&=\frac{1}{\eps}\(K\(q_j-q_{j+1}+\sqrt\eps y_j\)-
K(q_j-q_{j+1})-\sqrt\eps 
y_j\cdot \nabla K(q_j-q_{j+1})\) . 
\end{align*}
From the assumptions on $V$ and $K$, there exists $C>0$
independent of $\eps\in (0,1]$ such that
\begin{equation}
  \label{eq:borneeps}
  \sum_{2\le |\alpha|\le 6}\left\|
    \d^\alpha_{y_j}V_j^\eps(t)\right\|_{L^\infty} +\sum_{2\le |\alpha|\le 6}\left\|
    \d^\alpha_{y_j}K_{j}^\eps(t)\right\|_{L^\infty}\le C,
\end{equation}
where $K_j^\eps$ stands for $K_{j,{\rm diag}}^\eps$ or $K_{j,{\rm
    off}}^\eps$, indistinctly. 
In view of Taylor's formula, we have:
\begin{align}
  V_j^\eps(t,y_j) &= \int_0^1 \< y_j,\nabla^2 V\(t,q_j(t)+ \theta
  y_j\sqrt \eps\)y_j\>(1-\theta)d\theta, \label{eq:Veps}\\
 K_{j,{\rm diag}}^\eps(t,y_j)&= \int_0^1 \< y_j,\nabla^2 K\( \theta
  y_j\sqrt \eps\)y_j\>(1-\theta)d\theta \label{eq:Kepsdiag},\\
K_{j,{\rm off}}^\eps(t,y_j)&= \int_0^1 \< y_j,\nabla^2 K\(q_j(t)-q_{j+1}(t)+ \theta
  y_j\sqrt \eps\)y_j\>(1-\theta)d\theta . \label{eq:Kepsoff}
\end{align}
Therefore, we consider the coupled system (coupling is present
through $K_j^\eps$):
\begin{equation}
  \label{eq:ujeps}
\left\{
  \begin{aligned}
  i\d_t u_j^\eps+\frac{1}{2}\Delta u_j^\eps &= V_j^\eps (t,y_j)u^\eps_j
  +\(K_{j,{\rm diag}}^\eps\ast |u_j^\eps|^2\)u_j^\eps +\(K_{j,{\rm
      off}}^\eps\ast |u_{j+1}^\eps|^2\)u_j^\eps  \\
 -\frac{1}{\sqrt\eps}& \nabla K(0)\cdot \(\int
z\(|u_j^\eps(t,z)|^2-|u_j(t,z)|^2\)dz \) u_j^\eps \\
 -\frac{1}{\sqrt\eps} &\nabla K(q_j-q_{j+1})\cdot \(\int
z\(|u_{j+1}^\eps(t,z)|^2-|u_{j+1}(t,z)|^2\)dz \) u_j^\eps\\
 + \frac{1}{\eps}&\(2\RE W_j^\eps(t,y_j)\)u_j^\eps,
\end{aligned}
\right.
\end{equation}
with  
\begin{align*}
  W_j^\eps(t,y_j) &= e^{i\(S_j^\eps-S_{j+1}^\eps-q_j\cdot p_j + q_{j+1}\cdot
  p_{j+1}\)/\eps}\times\\
&\times\int K\(\sqrt\eps(y_j-z)\)e^{iz\cdot (p_j-p_{j+1})/\sqrt\eps}
u_j^\eps(t,z)\overline u_{j+1}^\eps\( t,z
+\frac{q_j-q_{j+1}}{\sqrt\eps}\) dz. 
\end{align*}

\subsection{Further simplification and bootstrap argument}
The last three  terms in \eqref{eq:ujeps} are singular in the limit
$\eps\to 0$. However, the singularity of the last term is expected to
be artificial, since in view of Proposition~\ref{prop:microloc}, it
should even be small, provided we have uniform estimates for
$u_j^\eps$ in $\Sigma^3$. The other two singular terms have an
interesting feature: they are real-valued, and depend only on time, so
we can treat them thanks to a gauge transform. Introduce
\begin{equation}\label{eq:exactesimplif}
  \begin{aligned}
    i\d_t \tilde u_j^\eps+\frac{1}{2}\Delta \tilde u_j^\eps &= V_j^\eps
  (t,y_j)\tilde u^\eps_j 
  +\(K_{j,{\rm diag}}^\eps\ast |\tilde u_j^\eps|^2\)\tilde u_j^\eps
  +\(K_{j,{\rm off}}^\eps\ast
  |\tilde u_{j+1}^\eps|^2\)\tilde u_j^\eps\\
&+\frac{1}{\eps}\(2\RE
  \tilde W_j^\eps\)\tilde u_j^\eps  ,
  \end{aligned}
  \end{equation}
with initial data $\tilde
  u_{j\mid t=0}^\eps=a_j$, and where we have denoted
\begin{align*}
 & \tilde W_j^\eps =e^{i\(S_j^\eps-S_{j+1}^\eps-q_j\cdot p_j + q_{j+1}\cdot
  p_{j+1}\)/\eps}e^{i(\theta_j^\eps-\theta_{j+1}^\eps)}\times\\
& \times\int K\(\sqrt\eps(y_j-z)\)e^{iz\cdot (p_j-p_{j+1})/\sqrt\eps}
\tilde u_j^\eps(t,z)\overline{\tilde u}_{j+1}^\eps\( t,z
+\frac{q_j-q_{j+1}}{\sqrt\eps}\) dz ,
\end{align*}
with
\begin{align*}
\theta^\eps_j(t)&=  \frac{1}{\sqrt\eps}\int_0^t \nabla K(0)\cdot \(\int
z\(|\tilde u_j^\eps(s,z)|^2-|u_j(s,z)|^2\)dz\)ds\\
&+\frac{1}{\sqrt\eps}\int_0^t\nabla
K(q_j(s)-q_{j+1}(s))\cdot \(\int 
z\(|\tilde u_{j+1}^\eps(s,z)|^2-| u_{j+1}(s,z)|^2\)dz\)ds.
\end{align*}
We then have: $u_j^\eps(t,y) = \tilde
u_j^\eps(t,y)e^{i\theta^\eps_j(t)}$. 
Note that $|u_j^\eps|= |\tilde u_j^\eps|$, so it is equivalent to pass
from $u_j^\eps$ to $\tilde u_j^\eps$,  or from $\tilde u_j^\eps$ to
$u_j^\eps$. 
In view of these reductions, \emph{in a first approximation},
Theorem~\ref{theo:alpha0} 
stems from:
\begin{theorem}\label{theo:final}
  Let $d\ge 1$ and $a_1,a_2\in \Sigma^6$. Assume 
  that $V$ and $K$ are real-valued and:
 \begin{align*}
 & V\in C^6(\R_{+}\times \R^d;\R),\quad \text{and}\quad   \d_x^\beta V\in
 L^\infty\(\R_{+}\times\R^d\),\quad  2\le |\beta|\le 6.\\
 &K\in W^{6,\infty}(\R^d;\R).
 \end{align*}
Let $T>0$. There exists $\eps_0>0$ such that for
$\eps\in(0,\eps_0]$, \eqref{eq:exactesimplif} has a unique
solution $(\tilde u_1^\eps,\tilde u_2^\eps)\in
C([0,T];\Sigma^3)^2$. Moreover, there exists $C$ independent of
$\eps\in (0,\eps_0]$ such that
\begin{equation}\label{eq:restefinal}
  \sup_{t\in [0,T]}\left\| \tilde u_1^\eps
    (t)-u_1(t)\right\|_{\Sigma^3}+\sup_{t\in [0,T]}\left\| \tilde u_2^\eps
    (t)-u_2(t)\right\|_{\Sigma^3} \le C\sqrt\eps. 
\end{equation}
\end{theorem}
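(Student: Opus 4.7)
The plan is to prove Theorem~\ref{theo:final} via a combined local existence and stability bootstrap in $\Sigma^3$, with the pivot solution $(u_1,u_2)$ of Proposition~\ref{prop:existenv0} serving as reference (which belongs to $C([0,T];\Sigma^6)^2$ under the hypotheses of the theorem). For each fixed $\eps>0$, I would first establish local existence of $(\tilde u_1^\eps,\tilde u_2^\eps)\in C([0,T^*_\eps);\Sigma^3)^2$ by a modified Picard iteration analogous to the proof of Proposition~\ref{prop:v}: the time-dependent potentials $V_j^\eps$ are at most quadratic in $y$ thanks to \eqref{eq:borneeps}, the nonlocal convolution terms are locally Lipschitz on $\Sigma^3$, and the rectangle term, although formally singular of order $1/\eps$, is smooth and bounded for each fixed $\eps$. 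The reality of $V$ and $K$ guarantees the conservation $\|\tilde u_j^\eps(t)\|_{L^2}=\|a_j\|_{L^2}$.

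Setting $M_0=\max_j \|u_j\|_{L^\infty([0,T];\Sigma^3)}$, I introduce the bootstrap exit time
\begin{equation*}
T^\eps=\sup\bigl\{\tau\in[0,T^*_\eps)\cap[0,T]:\ \|\tilde u_j^\eps(t)\|_{\Sigma^3}\le 2M_0+1,\ t\in[0,\tau],\ j=1,2\bigr\}.
\end{equation*}
On $[0,T^\eps]$, Proposition~\ref{prop:microloc} applied with $\mathcal K=K\in W^{6,\infty}$ and $k=3$ yields $\|\tilde W_j^\eps(t)\|_{L^\infty}\lesssim\eps^{3/2}$. More generally, expanding $y^\alpha \d_y^\beta$ with $|\alpha|+|\beta|\le 3$ by Leibniz through the oscillatory integral defining $\tilde W_j^\eps$ produces a finite sum of similar integrals: each $\d_y$ lands on $K$ with an extra factor $\sqrt\eps$, and each multiplication by $y_j$ is split as $(y_j-z)+z$, the $z$-piece being absorbed into the $\Sigma^3$-norms of $\tilde u_j^\eps,\tilde u_{j+1}^\eps$ and the $(y_j-z)$-piece being rewritten as $\eps^{-1/2}$ times the argument of $K$. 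Each resulting integral still falls within the scope of Proposition~\ref{prop:microloc} with $\min(\ell,k)\ge 3$, which gives the key uniform bound
\begin{equation*}
\bigl\|\tfrac{1}{\eps}\tilde W_j^\eps\tilde u_j^\eps\bigr\|_{\Sigma^3}\lesssim\sqrt\eps,\quad t\in[0,T^\eps].
\end{equation*}

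Setting $R_j^\eps=\tilde u_j^\eps-u_j$ and subtracting \eqref{eq:systenv0} from \eqref{eq:exactesimplif}, the Taylor identities \eqref{eq:Veps}--\eqref{eq:Kepsoff} show that $R^\eps=(R_1^\eps,R_2^\eps)$ solves
\begin{equation*}
i\d_t R_j^\eps+\tfrac{1}{2}\Delta R_j^\eps=\tfrac{1}{2}\<y,M_j(t)y\>R_j^\eps+\mathcal L_j(t)R^\eps+\mathcal S_j^\eps,
\end{equation*}
where $\mathcal L_j(t)$ encodes the linear couplings from the nonlocal first-moment corrections $G_j^\eps-G_j$ (bounded and Lipschitz on $\Sigma^3$ under the bootstrap), and $\mathcal S_j^\eps$ collects the $\sqrt\eps$-Taylor remainders of $V_j^\eps$ and of the two convolution potentials (controlled by \eqref{eq:borneeps} and the bootstrap) together with the rectangle remainder estimated above. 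Hence $\|\mathcal S_j^\eps(t)\|_{\Sigma^3}\lesssim\sqrt\eps$. Energy estimates in $\Sigma^3$, performed by commuting $y^\alpha\d_y^\beta$ with $|\alpha|+|\beta|\le 3$ through the quadratic Hamiltonian exactly as in the proof of Proposition~\ref{prop:v}, close a Gronwall estimate giving $\|R^\eps(t)\|_{\Sigma^3}\le C\sqrt\eps$ on $[0,T^\eps]$. For $\eps\le\eps_0$ small enough, this improves the bootstrap to $\|\tilde u_j^\eps(t)\|_{\Sigma^3}\le M_0+C\sqrt\eps<2M_0+1$, so $T^\eps=T$, yielding simultaneously global existence on $[0,T]$ and the convergence estimate \eqref{eq:restefinal}.

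The main obstacle is the uniform $\Sigma^3$ control of the singular rectangle term $\tfrac{1}{\eps}\tilde W_j^\eps\tilde u_j^\eps$. Each of the three $y$-derivatives or $y$-multiplications needed to enter $\Sigma^3$ must be arranged not to degrade the $\eps^{3/2}$ smallness furnished by Proposition~\ref{prop:microloc} by more than a factor $\eps^{-1/2}$, so that the net contribution remains $O(\sqrt\eps)$ after division by $\eps$. This dictates precisely the hypotheses $K\in W^{6,\infty}$ and $a_j\in\Sigma^6$: half the regularity is consumed when transferring derivatives and moments through the oscillatory integral, the other half is needed for Proposition~\ref{prop:microloc} to supply the initial $\eps^{3/2}$ gain.
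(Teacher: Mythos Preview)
Your overall strategy --- bootstrap in $\Sigma^3$, control the rectangle term via Proposition~\ref{prop:microloc}, and close by Gronwall --- is exactly the paper's, and your handling of $\tilde W_j^\eps$ is correct (in fact simpler than you make it: since the $y$-weights in the $\Sigma^3$ norm can all be placed on the outer factor $\tilde u_j^\eps$, you only need $\|\tilde W_j^\eps\|_{W^{3,\infty}}\lesssim\eps^{3/2}$, which follows directly from Proposition~\ref{prop:microloc} with $\mathcal K=\d^\alpha K$, $|\alpha|\le 3$).

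There is, however, a genuine gap in how you set up the error equation. You write the principal part as $\tfrac{1}{2}\langle y,M_j(t)y\rangle R_j^\eps$, i.e.\ you keep the \emph{limiting} quadratic potential on the error and send the Taylor remainders $(V_j^\eps-V_j^0)\tilde u_j^\eps$ and $\bigl((K_j^\eps-K_j^0)\ast|\tilde u_k^\eps|^2\bigr)\tilde u_j^\eps$ into the source $\mathcal S_j^\eps$. But these remainders grow like $\sqrt\eps\,|y|^3$, so estimating them in $\Sigma^3$ forces a bound on $\|\,|y|^6\tilde u_j^\eps\|_{L^2}$ --- that is, $\Sigma^6$ control of $\tilde u_j^\eps$, which your bootstrap hypothesis (only $\Sigma^3$) does not supply. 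The claim $\|\mathcal S_j^\eps\|_{\Sigma^3}\lesssim\sqrt\eps$ therefore does not close as written.

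The fix, which is what the paper does, is to keep the $\eps$-dependent potentials in the principal part: write $V_j^\eps w_j^\eps$ and $(K_j^\eps\ast|\tilde u_j^\eps|^2)w_j^\eps$ on the right, so that the Taylor remainders act on the reference solution $u_j$, which \emph{is} in $\Sigma^6$ by Proposition~\ref{prop:existenv0}. The potentials $V_j^\eps$ and $K_j^\eps\ast|\tilde u_k^\eps|^2$ are real-valued (hence invisible at the $L^2$ energy level), and by \eqref{eq:borneeps} their space derivatives of order $1$ through $3$ grow at most linearly in $y$ uniformly in $\eps$, so the commutators with $y^\alpha\d^\beta$, $|\alpha|+|\beta|\le 3$, are controlled by $\|w_j^\eps\|_{\Sigma^3}$. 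With this reorganisation your argument goes through.
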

Several comments are in order. First, this result implies that for
$j=1,2$, 
$\dot \theta_j^\eps$ is bounded on $[0,T]$, uniformly in $\eps$. To
get the result stated in  Theorem~\ref{theo:alpha0}, we will prove
that the functions $\theta_j^\eps$ converge as $\eps\to 0$, by
performing a second order asymptotic expansion of
$(\tilde u_1^\eps,\tilde u_2^\eps)$ (Theorem~\ref{theo:final} yields
the first order asymptotic expansion). 

Even in the case of a single wave packet, this result is new, since we
do not assume $\nabla K(0)=0$. In that case, the last term in
\eqref{eq:exactesimplif} vanishes, and the proof that we present below
becomes simpler. 

The proof is based on a bootstrap argument detailed
  in Section~\ref{sec:bootstrap}. For fixed $\eps>0$, 
\eqref{eq:exactesimplif} has a unique, local solution: $(\tilde
u_1^\eps,\tilde u_2^\eps)\in 
C([0,\tau^\eps];\Sigma^3)^2$, for some $\tau^\eps>0$. This can be
proven by adapting the approach presented in
Section~\ref{sec:refined}. To prove the theorem, we use energy
estimates to prove that so long as $(\tilde u_1^\eps,\tilde u_2^\eps)$
is bounded in $C([0,\tau];\Sigma^3)^2$, $\tau^\eps\le \tau\le T$,
\eqref{eq:restefinal} is true. Therefore, choosing $\eps_0>0$
sufficiently small, $(\tilde u_1^\eps,\tilde u_2^\eps)\in
C([0,T];\Sigma^3)^2$ for $\eps\in (0,\eps_0]$, and
\eqref{eq:restefinal} is satisfied. 

The reason why we work in $\Sigma^3$ and not in a larger space is that
we want to be able to neglect $\tilde W_j^\eps$: because of the
singular factor $1/\eps$ in front of the last term in
\eqref{eq:exactesimplif}, we need to prove $\tilde W_j^\eps=o(\eps)$,
and Proposition~\ref{prop:microloc} suggests that
we need to work in $\Sigma^3$, in which case 
$\tilde W_j^\eps=\O(\eps^{3/2})$. To differentiate $V_j^\eps$ and
$K_j^\eps$ three times (we work in $\Sigma^3$),
\eqref{eq:Veps}--\eqref{eq:Kepsoff} and
Proposition~\ref{prop:microloc} suggest to work with  
the regularity stated in Theorem~\ref{theo:final} (the same as in
Theorem~\ref{theo:alpha0}). 

\begin{remark}\label{rem:12}
  In the case $\alpha=1/2$, one can consider that all the terms
  involving $K$ are multiplied by $\sqrt\eps$. As a first consequence,
  it is enough to work in $\Sigma^2$ to prove that the term involving $\tilde
  W_j^\eps$ is negligible. By working in $\Sigma^2$, we only need to
  differentiate $V_j^\eps$ and
$K_j^\eps$ twice, hence the regularity assumption in
Theorem~\ref{theo:alpha12}. Finally, since the phase shift relating
$\tilde u_j^\eps$ and $u_j^\eps$ is multiplied by $\sqrt\eps$, it is
$\O(\sqrt\eps)$, as opposed to $\O(1)$ in the case $\alpha=0$. 
\end{remark}

\section{The bootstrap argument}
\label{sec:bootstrap}

In this section, we prove Theorem~\ref{theo:final}. More precisely, we
focus on \eqref{eq:restefinal}, in view of the discussion at the end
of Section~\ref{sec:scheme}.
\smallbreak

In Section~\ref{sec:scheme}, we have essentially resumed the
computations of Section~\ref{sec:rough}, up to two aspects:
\begin{itemize}
\item We have not used Taylor's formula for $V$ and $K$. 
\item The terms $\theta_j^\eps$ do not appear in the case of the
  $u_j$'s (replacing $u_j^\eps$ with $u_j$ in the expression of
  $\theta_j^\eps$ yields $\theta_j^\eps=0$).
\end{itemize}
In Section~\ref{sec:mixed}, we have seen that the analogue of the term
$W_j^\eps$ (or, equivalently, $\tilde W_j^\eps$) is negligible in the
limit $\eps\to 0$. These properties can be summarized as follows: the
functions $u_1$ and $u_2$ solve
\begin{equation}
  \label{eq:uj2}
    i\d_t u_j+\frac{1}{2}\Delta  u_j = V_j^\eps u_j
  +\(K_{j,{\rm diag}}^\eps\ast |u_j|^2\) u_j
  +\(K_{j,{\rm off}}^\eps\ast
  |u_{j+1}|^2\) u_j+\rho_j^\eps,
\end{equation}
where $\rho_j^\eps$ is given by the formula:
\begin{align*}
  \rho_j^\eps &= \(V_j^0-V_j^\eps\)u_j + \(\(K_{j,{\rm diag}}^\eps -
  K_{j,{\rm diag}}^0\)\ast |u_j|^2\)u_j\\
&\quad + \(\(K_{j,{\rm off}}^\eps -
  K_{j,{\rm off}}^0\)\ast |u_{j+1}|^2\)u_j,
\end{align*}
where $V_j^0$, $K_j^0$ are given by
\eqref{eq:Veps}--\eqref{eq:Kepsoff} with
$\eps=0$. 
We infer from \eqref{eq:Veps}--\eqref{eq:Kepsoff} and 
Proposition~\ref{prop:existenv0} 
that for all $T>0$, there exists $C>0$ independent of $\eps\in (0,1]$
such that
\begin{equation*}
  \sup_{t\in [0,T]}\|\rho_j^\eps(t)\|_{\Sigma^3}\le C\sqrt\eps. 
\end{equation*}
Since the bootstrap argument runs in $\Sigma^3$, it is natural to work
with such an estimate for the source term. This in turn imposes to
work with $a_j\in \Sigma^6$, as well as $V$ and $K$ as in
Theorem~\ref{theo:final}.  
\smallbreak

Set $w^\eps_j = \tilde u_j^\eps -u_j$: subtracting \eqref{eq:uj2} from
\eqref{eq:exactesimplif}, we see that the error satisfies the coupled
system, for $j=1,2$,
\begin{equation}
  \label{eq:w}
  \left\{
    \begin{aligned}
      i\d_t w_j^\eps + \frac{1}{2}\Delta w_j^\eps &=V_j^\eps w_j^\eps
      +\(K_{j,{\rm diag}}^\eps\ast |\tilde u_j^\eps|^2\)\tilde
      u_j^\eps 
  -\(K_{j,{\rm diag}}^\eps\ast |u_j|^2\)
      u_j\\
& +\(K_{j,{\rm off}}^\eps\ast
  |\tilde u_{j+1}^\eps|^2\)\tilde u_j^\eps-
\(K_{j,{\rm off}}^\eps\ast
  |u_{j+1}|^2\) u_j\\
&+\frac{1}{\eps}\(2\RE
  \tilde W_j^\eps\)\tilde u_j^\eps -\rho_j^\eps ,
    \end{aligned}
\right.
\end{equation}
with initial data $w^\eps_{j\mid t=0}=0$. Fix $T>0$ once and
for all in the course of the proof. By Proposition~\ref{prop:existenv0}, 
there exists $C_0>0$ such that
\begin{equation*}
  \sup_{t\in [0,T]}\|u_1(t)\|_{\Sigma^3} + \sup_{t\in
    [0,T]}\|u_2(t)\|_{\Sigma^3}\le C_0. 
\end{equation*}
Since $w_{j \mid t=0}^\eps=0$ and $\tilde u_j^\eps \in
C([0,\tau^\eps];\Sigma^3)$ for some $\tau^\eps$, we can find
$t^\eps>0$ such that
\begin{equation}
  \label{eq:solong}
  \|w_1^\eps(t)\|_{\Sigma^3} + \|w_2^\eps(t)\|_{\Sigma^3}\le C_0
\end{equation}
for $0\le t\le t^\eps$. So long as \eqref{eq:solong} holds, we perform
energy estimates, to show that \eqref{eq:restefinal} is true, with a
constant $C$ independent of $\eps$. It will follow that up to choosing
$\eps\in (0,\eps_0]$ with $\eps_0>0$ sufficiently small,
\eqref{eq:solong} holds for $t\in [0,T]$, which yields
Theorem~\ref{theo:final}. 
\smallbreak

\begin{notation}
 For two positive numbers $a^\eps$ and $b^\eps$,
the notation $ a^\eps\lesssim b^\eps$
means that there exists $C>0$ \emph{independent of} $\eps$ such that
for all $\eps\in (0,1]$, $a^\eps\le Cb^\eps$.  
\end{notation}
 
Note that so long as \eqref{eq:solong} holds, similarly to the case of
$I_j^\eps$, 
Proposition~\ref{prop:microloc} implies
\begin{equation*}
  \left\|\tilde W_j^\eps(t)\right\|_{W^{3,\infty}}\lesssim \eps^{3/2},
\end{equation*}
hence 
\begin{equation*}
  \left\|\frac{1}{\eps}\(2\RE
  \tilde W_j^\eps\)\tilde u_j^\eps\right\|_{\Sigma^3}\lesssim \sqrt\eps. 
\end{equation*}
Therefore, the last line in \eqref{eq:w}, viewed as a source
term, is $\O(\sqrt\eps)$ in $\Sigma^3$, so long as \eqref{eq:solong}
holds. The other terms in \eqref{eq:w} can then be considered
as linear terms, in view of the application of Gronwall's Lemma. 
\smallbreak

We write
\begin{align*}
  \(K_{j,{\rm diag}}^\eps\ast |\tilde u_j^\eps|^2\)\tilde
      u_j^\eps 
  -\(K_{j,{\rm diag}}^\eps\ast |u_j|^2\)
      u_j &=\(K_{j,{\rm diag}}^\eps\ast |\tilde u_j^\eps|^2\)
      w_j^\eps \\
&+
  \(K_{j,{\rm diag}}^\eps\ast \(|\tilde u_j^\eps|^2-|u_j|^2\)\)u_j\\
 &=\(K_{j,{\rm diag}}^\eps\ast |\tilde u_j^\eps|^2\)
      w_j^\eps \\
&+
  \(K_{j,{\rm diag}}^\eps\ast \(|w_j^\eps|^2 + 2\RE \bar u_j w_j^\eps\)\)u_j,
\end{align*}
and a similar relation for the off-diagonal kernel. We develop the
general convolution, where $K^\eps$ is of the form \eqref{eq:Kepsdiag}:
\begin{align*}
  \(K^\eps \ast f\)g = 
&\(\iint_0^1 \< y-z,\nabla^2 K\( \theta
  (y-z)\sqrt \eps\)(y-z)\>(1-\theta)d\theta f(z)dz \) g \\ 
= & \<y, \(\iint_0^1 (1-\theta)\nabla^2 {K}\( \theta
  (y-z)\sqrt \eps\) d\theta f(z)dz\) y \>g\\
&+\( \iint_0^1\<z,\nabla^2 {K}\( \theta
  (y-z)\sqrt \eps\)z\>(1-\theta)d\theta f(z)dz \) g\\
& - 2\<y, \iint_0^1 (1-\theta)\nabla^2 {K}\( \theta
  (y-z)\sqrt \eps\)z d\theta f(z)dz\>g.
\end{align*}
The same computation is available for \eqref{eq:Kepsoff}, with heavier
notations, so we leave it out. From this we readily infer
\begin{align*}
  \left\|\(K_{j,{\rm diag}}^\eps\ast \(|w_j^\eps|^2 + 2\RE \bar u_j
    w_j^\eps\)\)u_j \right\|_{\Sigma^3}\lesssim
  \|w_j^\eps\|_{\Sigma^3}^2 +  \|w_j^\eps\|_{\Sigma^3}\lesssim
  \|w_j^\eps\|_{\Sigma^3} 
\end{align*}
where we have used Proposition~\ref{prop:existenv0},
Cauchy--Schwarz inequality, and \eqref{eq:solong} for the last
estimate. We can infer an $L^2$ estimate for $w_j^\eps$: since all the
terms of the form $K_j^\eps \ast |\tilde u^\eps|^2 $ are real valued,
the standard energy estimate yields
\begin{align*}
  \|w_j^\eps(t)\|_{L^2}&\le \int_0^t \left\|\(K_{j,{\rm diag}}^\eps\ast
    \(|w_j^\eps|^2 + 2\RE \bar u_j 
    w_j^\eps\)\)u_j \right\|_{L^2}ds \\
&+ \int_0^t \left\|\(K_{j,{\rm off}}^\eps\ast
    \(|w_{j+1}^\eps|^2 + 2\RE \bar u_{j+1} 
    w_{j+1}^\eps\)\)u_j \right\|_{L^2}ds\\
&+  \int_0^t\left\|\frac{1}{\eps}\(2\RE
  \tilde W_j^\eps\)\tilde u_j^\eps\right\|_{L^2}ds+
\int_0^t\|\rho_j^\eps(s)\|_{L^2} ds\\
&\lesssim \int_0^t\(\|w_j^\eps(s)\|_{\Sigma^3} +
\|w_{j+1}^\eps(s)\|_{\Sigma^3}\)ds +\int_0^t \sqrt\eps ds.
\end{align*}
To pass from this $L^2$ estimate to a $\Sigma^3$ estimate, we have to
assess the action of the operators of multiplication by $y_j$ and
$\nabla_{y_j}$ on \eqref{eq:w}. First, $\nabla_{y_j}$ commutes with
the left hand side of \eqref{eq:w}, but not with the right hand
side. We write
\begin{equation*}
  \d_{k\ell m}^3 \( V_j^\eps w_j^\eps\) = 
  V_j^\eps \d_{k\ell m}^3w_j^\eps+ \sum_{{0\le |\alpha|\le 2}\atop
{|\beta|= 3-|\alpha|}} c(\alpha,k,\ell,m) \d^\beta V_j^\eps \d^\alpha
w_j^\eps. 
\end{equation*}
The first term vanishes in an $L^2$ estimate of $\d_{k\ell
  m}^3w_j^\eps$, and in view of \eqref{eq:borneeps}, for all multi-indices
$\alpha,\beta$ with  $0\le
|\alpha|\le 2$ and $|\beta|= 3-|\alpha|$, 
\begin{equation*}
  \left\|  \d^\beta V_j^\eps \d^\alpha
w_j^\eps\right\|_{L^2}\lesssim \|w_j^\eps\|_{\Sigma^3}. 
\end{equation*}
\begin{remark}
The presence of the potential $V_j^\eps$, which is morally a
time dependent harmonic potential, forces us to work in $\Sigma^3$,
and not simply in $H^3(\R^d)$: this is a standard feature of such
potentials, whose associated dynamics consists of rotations in phase
space, so the regularity/decay of the functions 
must be the same in space and in frequency. 
\end{remark}
The terms $\(K_{j,{\rm diag}}^\eps\ast |\tilde u_j^\eps|^2\)
      w_j^\eps$ and $\(K_{j,{\rm off}}^\eps\ast |\tilde u_{j+1}^\eps|^2\)
      w_j^\eps$ are treated similarly, and produce a term of the form
      real$\times \d_{k\ell m}^3w_j^\eps$, plus a term controlled in
      $L^2$ by $\|w_j^\eps\|_{\Sigma^3}$, so long as \eqref{eq:solong}
      holds. 
\smallbreak

On the other hand, the multiplication by $y_j$
commutes with the right hand side of \eqref{eq:w}, but not with the
left hand side: 
\begin{equation*}
  \left[ i\d_t+\frac{1}{2}\Delta,y\right]= \nabla,
\end{equation*}
so the commutation errors for the equation satisfied by $|y_j|^3
w_j^\eps$ consists of a linear combination, with constant coefficients,
of terms of the form $y_j^\alpha\d^\beta w_j^\eps$,  with
$|\alpha|+|\beta|=3$. We end up with, so long as \eqref{eq:solong}
holds: 
\begin{equation*}
  \|w_j^\eps(t)\|_{\Sigma^3}\lesssim \int_0^t\(\|w_j^\eps(s)\|_{\Sigma^3} +
\|w_{j+1}^\eps(s)\|_{\Sigma^3}\)ds +\int_0^t \sqrt\eps ds.
\end{equation*}
Gronwall's Lemma yields \eqref{eq:restefinal}, hence
Theorem~\ref{theo:final}. 

\section{Second order expansion and limiting phase shifts}
\label{sec:DA2}

In view of Theorem~\ref{theo:final}, the phase shifts $\theta_j^\eps$
are such that $\dot \theta_j^\eps$ are bounded on $[0,T]$, uniformly
in $\eps\in (0,\eps_0]$, since $|u_j^\eps|^2=|\tilde
u_j^\eps|^2=|u_j|^2 + \O(\sqrt\eps)$. To study the limit of
$\theta_j^\eps$ as $\eps \to 0$, we need to perform a second order
expansion of $(\tilde u_1^\eps,\tilde u_2^\eps)$ as $\eps \to 0$, to
understand the contribution of order $\sqrt\eps$. Therefore, we seek
\begin{equation}\label{eq:DA2}
  \tilde u_j^\eps = u_j +\sqrt\eps u_j^{(1)} +\O(\eps). 
\end{equation}
\begin{remark}
  An error term of order $\O(\eps)$ is natural, since one could actually
seek a more general asymptotic expansion to arbitrary order, of the
form
\begin{equation}\label{eq:DAgeneral}
  \tilde u_j^\eps = u_j +\sum_{\ell=1}^N\eps^{\ell/2} u_j^{(\ell)}
  +\O\(\eps^{(N+1)/2}\).  
\end{equation}
\end{remark}
Resuming the arguments presented in Section~\ref{sec:scheme}, we see
that formally, the last line in \eqref{eq:ujeps} is $\O(\eps^\infty)$,
if we work with an infinite regularity. To get a second order
approximation of $\tilde u_j^\eps$, we simply need to prove that this
term is $\O(\eps)$, but we can certainly not perform the study with
only an $\O(\sqrt\eps)$ information like we did in order to establish
Theorem~\ref{theo:final}. 
To compute the limit of $\theta_j^\eps$, we need to
establish the asymptotic behavior of $\tilde u_j^\eps$ up to
$\O(\eps)$ in $\Sigma$, and not only in $L^2$, so we make an extra
regularity assumption. 
We  remark that if in
Theorem~\ref{theo:final}, we require $a_1,a_2\in \Sigma^7$, with
 \begin{align*}
 & V\in C^7(\R_{+}\times \R^d;\R),\quad \text{and}\quad   \d_x^\beta V\in
 L^\infty\(\R_{+}\times\R^d\),\quad  2\le |\beta|\le 7,\\
 &K\in W^{7,\infty}(\R^d;\R),
 \end{align*}
then the conclusions of Theorem~\ref{theo:final} can be replaced by:
$(\tilde u_1^\eps,\tilde u_2^\eps)\in C([0,T];\Sigma^4)^2$ and
\begin{equation*}
  \sup_{t\in [0,T]}\left\| \tilde u_1^\eps
    (t)-u_1(t)\right\|_{\Sigma^4}+\sup_{t\in [0,T]}\left\| \tilde u_2^\eps
    (t)-u_2(t)\right\|_{\Sigma^4} \le C\sqrt\eps. 
\end{equation*}
In particular, $(\tilde u_1^\eps,\tilde u_2^\eps)\in
C([0,T];\Sigma^4)^2$ \emph{uniformly} for $\eps\in (0,\eps_0]$.
Thanks to Proposition~\ref{prop:microloc}, this
enables us to claim that in \eqref{eq:exactesimplif}, 
\begin{equation*}
  \left\|\tilde W_j^\eps(t)\right\|_{W^{3,\infty}}\lesssim \eps^{2},
\end{equation*}
hence 
\begin{equation*}
  \left\|\frac{1}{\eps}\(2\RE
  \tilde W_j^\eps\)\tilde u_j^\eps\right\|_{\Sigma^3}\lesssim \eps. 
\end{equation*} 
To derive an equation for the corrector $u_j^{(1)}$, we
plug \eqref{eq:DA2} into \eqref{eq:exactesimplif}, and discard all the
terms which are, at least formally, $\O(\eps)$, including thus the
last line. The term corresponding to the power $\sqrt\eps$ yields:
\begin{equation}
  \label{eq:reste2}
 \begin{aligned}
  i\d_t u_j^{(1)}+\frac{1}{2}&\Delta u_j^{(1)} = V_j^0 u_j^{(1)}
  +\(K^0_{j,{\rm diag}}\ast|u_j|^2\) u_j^{(1)} + \(K^0_{j,{\rm
      off}}\ast|u_{j+1}|^2\) u_j^{(1)}  \\
& +2\(K^0_{j,{\rm diag}}\ast \RE\(\overline u_j u_j^{(1)} \)\)
u_j + 2\(K^0_{j,{\rm   off}}\ast\RE\(\overline u_{j+1} u_{j+1}^{(1)}\) \)u_j\\
& +{\mathcal V}_j u_j + \({\mathcal K}_{j,{\rm
    diag}}\ast|u_j|^2\) u_j + \({\mathcal K}_{j,{\rm
      off}}\ast|u_{j+1}|^2\) u_j,
\end{aligned} 
\end{equation}
with Cauchy data $ u_{j\mid t=0}^{(1)}=0$,
where we have denoted the third order Taylor expansions
\begin{align*}
  {\mathcal V}_j(t,y)&=\frac{1}{6}\nabla^3 V\(t,q_j(t)\)y\cdot y\cdot y,\\
 {\mathcal K}_{j,{\rm diag}}(y) &=\frac{1}{6}\nabla^3 K(0)y\cdot y\cdot y, \\
{\mathcal K}_{j,{\rm off}} (t,y)&= \frac{1}{6}\nabla^3
K\(q_j(t)-q_{j+1}(t)\)y\cdot y\cdot y .
\end{align*}
These equations are naturally linear in the unknown
$(u_1^{(1)},u_2^{(1)})$.
In view of Proposition~\ref{prop:existenv0}, the last line in
\eqref{eq:reste2}, which corresponds to a source term, belongs to
$C(\R_+;\Sigma^4)$. This non-trivial source term makes $u_j^{(1)}$
non-zero. Even though \eqref{eq:reste2} is a linear system,
it seems easier to prove that it has a unique solution, by proceeding
in the same way as in the proof of Proposition~\ref{prop:existenv0}
(see Section~\ref{sec:refined}). We have:
\begin{proposition}\label{prop:reste2}
Suppose that $a_1,a_2\in \Sigma^7$, and   
\begin{align*}
 & V\in C^7(\R_{+}\times \R^d;\R),\quad \text{and}\quad   \d_x^\beta V\in
 L^\infty\(\R_{+}\times\R^d\),\quad  2\le |\beta|\le 7,\\
 &K\in W^{7,\infty}(\R^d;\R).
 \end{align*}
Then \eqref{eq:reste2} has a unique solution
$  \(u_1^{(1)},u_2^{(1)}\)\in C(\R_+;\Sigma^4)$.
\end{proposition}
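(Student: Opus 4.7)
My plan is to adapt the Picard iterative scheme used in the proof of Proposition~\ref{prop:existenv0} to this linear inhomogeneous setting. The key structural observation is that at $\eps=0$ the kernels $K_{j,\mathrm{diag}}^{0}$ and $K_{j,\mathrm{off}}^{0}$ given by \eqref{eq:Kepsdiag}--\eqref{eq:Kepsoff} reduce to \emph{quadratic} polynomials in $y$ with bounded time-dependent coefficients. Expanding each convolution in \eqref{eq:reste2} therefore splits the linear operator acting on $(u_1^{(1)},u_2^{(1)})$ into (i) multiplication of $u_j^{(1)}$ by an at-most-quadratic polynomial in $y$ with bounded coefficients, plus (ii) multiplication of the fixed envelope $u_j$ by a quadratic polynomial in $y$ whose coefficients are scalar linear functionals (moments) of $u_1^{(1)}$ and $u_2^{(1)}$, such as $\int \bar u_j u_j^{(1)}\,dz$, $\int z\bar u_j u_j^{(1)}\,dz$ and $\int \langle z,\nabla^{2}K(0)z\rangle \bar u_j u_j^{(1)}\,dz$. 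Since $a_j\in\Sigma^{7}$, Proposition~\ref{prop:existenv0} yields $u_j\in C(\R_{+};\Sigma^{7})$, so that the source term $f_j:={\mathcal V}_j u_j+({\mathcal K}_{j,\mathrm{diag}}\ast|u_j|^{2})u_j+({\mathcal K}_{j,\mathrm{off}}\ast|u_{j+1}|^{2})u_j$, whose factors grow at most cubically in $y$, belongs to $C(\R_{+};\Sigma^{4})$.

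I would then construct the solution by iteration: set $(u_1^{(1),0},u_2^{(1),0}):=(0,0)$ and, at step $n\ge 1$, solve the decoupled pair of linear Schr\"odinger equations obtained from \eqref{eq:reste2} by freezing $u_j^{(1)},u_{j+1}^{(1)}$ at the previous iterate inside the two genuinely non-local terms $2(K_{j,\mathrm{diag}}^{0}\ast\RE(\bar u_j u_j^{(1)}))u_j$ and $2(K_{j,\mathrm{off}}^{0}\ast\RE(\bar u_{j+1} u_{j+1}^{(1)}))u_j$. Each step is then a linear Schr\"odinger equation with an at-most-quadratic time-dependent potential in $y$ acting on $u_j^{(1),n}$, forced by a $\Sigma^{4}$ source (made of $f_j$ plus an $u_j$-weighted quadratic polynomial in $y$ with coefficients of size $\lesssim \|u_1^{(1),n-1}\|_\Sigma+\|u_2^{(1),n-1}\|_\Sigma$). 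Fujiwara's propagator theory \cite{Fujiwara79,Fujiwara} combined with successive energy estimates on $y^{\alpha}\partial^{\beta}u_j^{(1),n}$ for $|\alpha|+|\beta|\le 4$ (the commutators with the quadratic Hamiltonian remain at most quadratic in $y$) yields $u_j^{(1),n}\in C(\R_{+};\Sigma^{4})$, and a Gronwall argument of the same kind as in Section~\ref{sec:refined} gives a bound on $\Phi_n(t):=\|u_1^{(1),n}(t)\|_{\Sigma^{4}}+\|u_2^{(1),n}(t)\|_{\Sigma^{4}}$ uniform in $n$ on any $[0,T]$.

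Convergence of the scheme then follows by writing the equation satisfied by the differences $v_j^{n}:=u_j^{(1),n+1}-u_j^{(1),n}$: the same quadratic Hamiltonian acts on $v_j^{n}$, and the forcing is controlled in $\Sigma$ by scalar functionals of $v_j^{n-1}$ multiplied by the fixed $u_j$. A standard energy estimate in $\Sigma$ gives a contraction on a small time interval $[0,T_{0}]$; combining with the uniform $\Sigma^{4}$ bound and interpolation produces the limit in $C([0,T_{0}];\Sigma^{4})$, and the uniform bound allows one to reiterate and extend the solution to $C(\R_{+};\Sigma^{4})$. Uniqueness is an immediate Gronwall estimate on the $\Sigma$-norm of the difference of two solutions with the same (zero) initial data.

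The main obstacle is exactly the one already present in Section~\ref{sec:refined}: the effective time-dependent potential is \emph{unbounded} (at most quadratic in $y$), so the non-local scalar coupling cannot be handled by a perturbative argument in $L^{2}$ alone; one must run energy estimates simultaneously at every order $|\alpha|+|\beta|\le 4$, carefully checking that commutators with $y$ and $\nabla_{y}$ stay at most quadratic thanks to the explicit polynomial structure of $V_j^{0}$, $K_{j,\mathrm{diag}}^{0}$ and $K_{j,\mathrm{off}}^{0}$. The requirement $a_j\in\Sigma^{7}$ (one unit higher than in Theorem~\ref{theo:final}) is precisely what is needed to place the cubic source $f_j$ in $\Sigma^{4}$, which is the order at which the bootstrap estimate for the iterates closes.
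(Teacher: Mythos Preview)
Your proposal is correct and follows essentially the same approach as the paper: the paper's proof consists of a single sentence pointing to the argument of Proposition~\ref{prop:existenv0} (Section~\ref{sec:refined}), and you have carried out precisely that adaptation, correctly identifying the quadratic-in-$y$ structure of $V_j^{0}$, $K_{j,\mathrm{diag}}^{0}$, $K_{j,\mathrm{off}}^{0}$, the role of the $\Sigma^{7}$ assumption in placing the cubic source in $\Sigma^{4}$, and the iterative scheme with Fujiwara's propagator and Gronwall closure.
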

Denote by $v_j^\eps = u_j+\sqrt\eps u_j^{(1)}$ the second order
approximate solution, and by $\tilde w_j^\eps = \tilde u_j^\eps -
v_j^\eps$ the corresponding error term. It satisfies $\tilde w_{j\mid
  t=0}^\eps=0$, and
\begin{equation*}
  \left\{
    \begin{aligned}
      i\d_t \tilde w_j^\eps + \frac{1}{2}\Delta \tilde w_j^\eps
      &=V_j^\eps \tilde w_j^\eps
      +\(K_{j,{\rm diag}}^\eps\ast |\tilde u_j^\eps|^2\)\tilde
      u_j^\eps 
  -\(K_{j,{\rm diag}}^\eps\ast |v_j^\eps|^2\)
      v^\eps_j\\
& +\(K_{j,{\rm off}}^\eps\ast
  |\tilde u_{j+1}^\eps|^2\)\tilde u_j^\eps-
\(K_{j,{\rm off}}^\eps\ast
  |v^\eps_{j+1}|^2\) v^\eps_j\\
&+\frac{1}{\eps}\(2\RE
  \tilde W_j^\eps\)\tilde u_j^\eps -\tilde\rho_j^\eps ,
    \end{aligned}
\right.
\end{equation*}
where the new source term is such that
\begin{equation*}
  \sup_{t\in [0,T]}\|\tilde \rho_j^\eps(t)\|_{\Sigma} \le C\eps.
\end{equation*}
Resuming the energy estimates used in Section~\ref{sec:bootstrap}, we
infer:
\begin{proposition}
Let $T>0$. Under the assumptions of Proposition~\ref{prop:reste2}, 
there exists $\eps_0>0$ such that for
$\eps\in(0,\eps_0]$, \eqref{eq:exactesimplif} has a unique
solution $(\tilde u_1^\eps,\tilde u_2^\eps)\in
C([0,T];\Sigma^4)^2$. Moreover, there exists $C$ independent of
$\eps\in (0,\eps_0]$ such that
\begin{equation*}
  \sup_{t\in [0,T]}\left\| \tilde u_1^\eps
    (t)-u_1(t)-\sqrt\eps u_1^{(1)}(t)\right\|_{\Sigma}
+\sup_{t\in [0,T]}\left\| \tilde u_2^\eps
    (t)-u_2(t)-\sqrt\eps u_2^{(1)}(t)\right\|_{\Sigma} \le C\eps. 
\end{equation*}  
\end{proposition}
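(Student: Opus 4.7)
The plan is to run the bootstrap strategy of Section~\ref{sec:bootstrap} at one order of regularity higher, so as to first upgrade Theorem~\ref{theo:final} to a uniform bound for $\tilde u_j^\eps$ in $\Sigma^4$, and then control the second-order error $\tilde w_j^\eps:=\tilde u_j^\eps-u_j-\sqrt\eps\, u_j^{(1)}$ by repeating the same energy estimate, this time in $\Sigma$ instead of $\Sigma^3$. Under the assumptions of Proposition~\ref{prop:reste2} (that is, $a_j\in\Sigma^7$, $V\in C^7$ and $K\in W^{7,\infty}$ with the corresponding derivative bounds), the proof of Theorem~\ref{theo:final} can be rerun verbatim with $\Sigma^3$ replaced by $\Sigma^4$: \eqref{eq:borneeps} remains valid up to six derivatives, the commutators of $y$ and $\nabla$ with the quadratic-in-$y$ Hamiltonian behave as before, and Proposition~\ref{prop:microloc} applied now with $k=4$ and $\ell=3$ yields $\|\tilde W_j^\eps(t)\|_{W^{3,\infty}}\lesssim \eps^{2}$, so the singular prefactor $1/\eps$ leaves an $\O(\eps)$ contribution in $\Sigma^3$, more than enough to close the bootstrap. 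The outcome is $(\tilde u_1^\eps,\tilde u_2^\eps)\in C([0,T];\Sigma^4)^2$ with norms bounded uniformly in $\eps\in(0,\eps_0]$, and in particular $\|\tilde u_j^\eps-u_j\|_{\Sigma^4}\lesssim \sqrt\eps$; combined with $u_j^{(1)}\in C([0,T];\Sigma^4)$ from Proposition~\ref{prop:reste2}, this shows $v_j^\eps:=u_j+\sqrt\eps u_j^{(1)}$ is uniformly bounded in $\Sigma^4$.

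Next I would quantify the source-type ingredients in the error system displayed just before the proposition. The rectangle term $\frac{1}{\eps}(2\RE \tilde W_j^\eps)\tilde u_j^\eps$ is $\O(\eps)$ in $\Sigma$: this is precisely where the $\Sigma^4$ pre-bound for $\tilde u_j^\eps$ is used, via Proposition~\ref{prop:microloc}. The discrepancy term $\tilde\rho_j^\eps$ collects, on the one hand, the remainder in the Taylor expansions of $V_j^\eps-V_j^0$, $K_{j,\rm diag}^\eps-K_{j,\rm diag}^0$ and $K_{j,\rm off}^\eps-K_{j,\rm off}^0$ beyond the cubic contributions $\mathcal V_j,\mathcal K_{j,\rm diag},\mathcal K_{j,\rm off}$ explicitly included in~\eqref{eq:reste2}, which is $\O(\eps)$ in $\Sigma$ thanks to one further derivative of $V$ and $K$; and on the other hand, the $\O(\eps)$ cross terms of the form $|u_j^{(1)}|^2$ and the $\sqrt\eps$-level terms that cancel exactly by the very definition of $u_j^{(1)}$ in~\eqref{eq:reste2}. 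Altogether, $\sup_{[0,T]}\|\tilde\rho_j^\eps(t)\|_{\Sigma}\lesssim \eps$ as claimed in the text preceding the proposition.

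The core step is then a bootstrap/Gronwall estimate in $\Sigma$ for $\tilde w_j^\eps$, parallel to Section~\ref{sec:bootstrap}. Assuming the provisional bound $\|\tilde w_j^\eps(t)\|_\Sigma\le 1$ on some maximal subinterval of $[0,T]$, I would split each nonlinear difference as
\[
\(K^\eps_{j,\rm diag}\ast|\tilde u_j^\eps|^2\)\tilde u_j^\eps-\(K^\eps_{j,\rm diag}\ast|v_j^\eps|^2\)v_j^\eps=\(K^\eps_{j,\rm diag}\ast|\tilde u_j^\eps|^2\)\tilde w_j^\eps+\(K^\eps_{j,\rm diag}\ast(|\tilde u_j^\eps|^2-|v_j^\eps|^2)\)v_j^\eps,
\]
with the analogous decomposition for the off-diagonal kernel, and bound the second piece linearly in $\|\tilde w_j^\eps\|_\Sigma$ using the uniform $\Sigma^4$ control of $\tilde u_j^\eps$ and $v_j^\eps$ together with the $L^\infty$ bounds on $K$ and its derivatives (the quadratic-in-$\tilde w_j^\eps$ contribution is absorbed thanks to the bootstrap hypothesis). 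Since $V_j^\eps$ and the convolution factors $K_{j,\cdot}^\eps\ast|\cdot|^2$ are real-valued, the standard $\Sigma$-energy estimate, with the same commutators of $y$ and $\nabla$ as in Section~\ref{sec:bootstrap}, delivers
\[
\|\tilde w_j^\eps(t)\|_\Sigma\lesssim \eps + \int_0^t\(\|\tilde w_j^\eps(s)\|_\Sigma+\|\tilde w_{j+1}^\eps(s)\|_\Sigma\)ds,
\]
and Gronwall closes the bootstrap for $\eps_0$ small. The main obstacle — really a bookkeeping one rather than a conceptual one — is the need to pre-establish uniform-in-$\eps$ control of $\tilde u_j^\eps$ in $\Sigma^4$ \emph{before} treating the rectangle term as $\O(\eps)$ in $\Sigma$; this is what forces the jump, relative to Theorem~\ref{theo:final}, from $\Sigma^6$ data and a $\Sigma^3$-bootstrap to $\Sigma^7$ data and a $\Sigma^4$-bootstrap, and it is the reason one must keep the regularity hierarchy (initial data, $V$, $K$, ambient $\Sigma^k$) in step throughout.
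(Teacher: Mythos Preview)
Your proposal is correct and follows the paper's approach closely: upgrade Theorem~\ref{theo:final} to $\Sigma^4$ under the extra regularity, use Proposition~\ref{prop:microloc} to make the rectangle term $\O(\eps)$, check $\tilde\rho_j^\eps=\O(\eps)$ in $\Sigma$, and close a Gronwall estimate for $\tilde w_j^\eps$ in $\Sigma$. Two small remarks. First, the paper explicitly notes (right after the proposition) that \emph{no bootstrap is needed} for the $\Sigma$-estimate on $\tilde w_j^\eps$: once $\tilde u_j^\eps$, $u_j$, and $u_j^{(1)}$ are uniformly bounded in $\Sigma^4$, so is $\tilde w_j^\eps$, and the quadratic-in-$\tilde w_j^\eps$ piece is already controlled linearly in $\|\tilde w_j^\eps\|_\Sigma$ by these a priori bounds; your bootstrap is harmless but superfluous. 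Second, with $(k,\ell)=(4,3)$ Proposition~\ref{prop:microloc} yields only $\eps^{3/2}$; to get $\eps^2$ you need $\min(k,\ell)=4$, which is available since $K\in W^{7,\infty}$ lets you take $\ell\ge 4$.
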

Note that unlike in the proof of Theorem~\ref{theo:final}, no
bootstrap argument is needed at this stage, since we already have
uniform estimates for $\tilde u_j^\eps,u_j, u_j^{(1)}$ in
$C([0,T];\Sigma^4)$. We readily infer:
\begin{equation*}
  \theta_j^\eps(t)=\theta_j(t)+\O\(\sqrt\eps\)\quad \text{in }L^\infty([0,T]),
\end{equation*}
where $\theta_j$ is given by
\begin{equation}
  \label{eq:theta}
  \begin{aligned}
  \theta_j(t) &=  \int_0^t \nabla K(0)\cdot \(2\RE\int
z\overline u_j(s,z) u_j^{(1)}(s,z)dz\)ds\\
&+\int_0^t\nabla
K(q_j(s)-q_{j+1}(s))\cdot \(2\RE\int 
z\overline u_{j+1}(s,z) u_{j+1}^{(1)}(s,z)dz\)ds. 
\end{aligned}
\end{equation}
We have obviously $\theta_j\in C^1([0,T])$, and $\theta_j(0)=\dot
\theta_j(0)=0$. To see that $\theta_j\in C^2([0,T])$, in view of
the Cauchy--Schwarz inequality, and since $u_j,u_j^{(1)}\in
C([0,T];\Sigma)$, it suffices that
verify that $u_j,u_j^{(1)}\in C^1([0,T];L^2)$. This property is 
 a straightforward
consequence of Equations~\eqref{eq:systenv0} and \eqref{eq:reste2}, in view of
the regularity of $u_j$ and $u_j^{(1)}$. This completes the proof of
Theorem~\ref{theo:alpha0}. 
\smallbreak

To conclude, we check that the phase shifts $\theta_j$ are non-trivial
in general, by computing their initial second order derivatives: since
$u_{j\mid t=0}^{(1)}=0$, 
\begin{align*}
  \ddot \theta_j(0)&= \nabla K(0)\cdot \(2\RE\int
z\overline a_j(z) \d_t u_j^{(1)}(0,z)dz\)\\
&+\nabla
K(q_{j}(0)-q_{j+1}(0))\cdot \(2\RE\int 
z\overline a_{j+1}(z) \d_t u_{j+1}^{(1)}(0,z)dz\). 
\end{align*}
From \eqref{eq:reste2}, we have
\begin{equation*}
  i\d_t u_{j}^{(1)}(0,y)= \({\mathcal V}_j(0,y)+{\mathcal
    K}_{j,{\rm diag}}\ast|a_j|^2+ {\mathcal K}_{j,{\rm
      off}}\ast|a_{j+1}|^2\) a_j(y) , 
\end{equation*}
so the first line in the expression for $\ddot \theta_j(0)$ is zero, and
\begin{align*}
  \ddot \theta_j(0)&= \nabla
K(q_{j}(0)-q_{j+1}(0))\cdot \(\int 
z{\mathcal V}_j(0,z)2\IM\(\overline a_{j+1}a_j\)(z)dz\)\\
&+ \nabla
K(q_{j}(0)-q_{j+1}(0))\cdot \(\int \({\mathcal K}_{j,{\rm diag}}\ast|a_j|^2\)
2\IM\(\overline a_{j+1}a_j\)(z)dz\)\\
&+ \nabla
K(q_{j}(0)-q_{j+1}(0))\cdot \(\int \({\mathcal K}_{j,{\rm off}}\ast|a_{j+1}|^2\)
2\IM\(\overline a_{j+1}a_j\)(z)dz\).  
\end{align*}
Therefore in general $\theta_j\not\equiv 0$. 
\begin{remark}[Instability]
  The fact that it is necessary to analyze an $\O(\sqrt\eps)$
  correction to $(u_1,u_2)$ to compute $(\theta_1,\theta_2)$  implies
  the existence of 
  instabilities at the semi-classical level. Typically, a perturbation
  of the initial data at order $\eps^\gamma$ with $0<\gamma<1/2$ will
  affect the leading order behavior of $u^\eps$ in $L^2$ (for the strong
  topology) for some
  time $0<t^\eps \to 0$. On the other hand, since the $\theta_j$'s are
  purely time dependent, the Wigner measure are not affected by this
  phenomenon. Since the approach to describe this instability is the
  same as in \cite{CaARMA}, we simply refer to that paper for more
  details. 
\end{remark}

\subsubsection*{Acknowledgement} I am grateful to Alexander Mielke for
pointing out Remark~\ref{rem:hamil}.

\providecommand{\bysame}{\leavevmode\hbox to3em{\hrulefill}\thinspace}
\providecommand{\MR}{\relax\ifhmode\unskip\space\fi MR }
% \MRhref is called by the amsart/book/proc definition of \MR.
\providecommand{\MRhref}[2]{%
  \href{http://www.ams.org/mathscinet-getitem?mr=#1}{#2}
}
\providecommand{\href}[2]{#2}

 \end{document}